\documentclass[a4paper,12pt,reqno,twoside]{amsart}

\usepackage[utf8]{inputenc} 		
\usepackage[T1]{fontenc} 

\usepackage{amssymb}
\usepackage{amsmath}
\usepackage{graphicx}
\usepackage[colorlinks,citecolor=blue,urlcolor=blue]{hyperref}
\usepackage{cite}
\usepackage[hmargin=2.5cm,vmargin=2.5cm]{geometry}
\usepackage{mathrsfs} 

\addtolength\parskip {2mm}

\newcommand{\be}{\begin{eqnarray}}
\newcommand{\ee}{\end{eqnarray}}

\newcommand{\beq}{\begin{equation}}
\newcommand{\eeq}{\end{equation}}
\newcommand{\beqn}{\begin{equation*}}
\newcommand{\eeqn}{\end{equation*}}


\DeclareMathOperator{\tr}{{tr}}

\DeclareMathOperator{\Var}{\mathrm{Var}}

\newtheorem{thm}{Theorem}[section]
\newtheorem{prop}[thm]{Proposition}
\newtheorem{cor}[thm]{Corollary}
\newtheorem{lem}[thm]{Lemma}
\newtheorem{defn}[thm]{Definition}



\newcommand\cB{{\mathcal B}}

\newcommand\cD{{\mathcal D}}
\newcommand\cE{{\mathcal E}}
\newcommand\cF{{\mathcal F}}

\newcommand\cI{{\mathcal I}}

\newcommand\cL{{\mathcal L}}
\newcommand\cM{{\mathcal M}}
\newcommand\cN{{\mathcal N}}

\newcommand\cT{{\mathcal T}}


\newcommand\bE{{\mathbb E}}

\newcommand\bN{{\mathbb N}}

\newcommand\bP{{\mathbb P}}

\newcommand\bR{{\mathbb R}}
\newcommand\bS{{\mathbb S}}

\newcommand\bZ{{\mathbb Z}}






\keywords{Stein's method, multivariate normal approximation, time-dependent dynamical systems, quasistatic dynamical systems}
\thanks{2010 {\it Mathematics Subject Classification.} 60F05; 37A50}

\begin{document}
\title[CLTs with a rate for sequences of transformations]{Central limit theorems with a rate of convergence for sequences of transformations}
\author[Olli Hella]{Olli Hella}
\address[Olli Hella]{
Department of Mathematics and Statistics, P.O.\ Box 68, Fin-00014 University of Helsinki, Finland.}
\email{olli.hella@helsinki.fi}

\begin{abstract}
Using Stein's method, we prove an abstract result that yields multivariate central limit theorems with a rate of convergence for time-dependent dynamical systems. As examples we study a model of expanding circle maps and a quasistatic model. In both models we prove multivariate central limit theorems with a rate of convergence.      
\end{abstract}
\maketitle
\subsection*{Acknowledgements}I thank my Ph.D. advisor Mikko Stenlund for many helpful advices and suggestions
he gave me during the research and writing process of this paper.
I also thank the Jane and Aatos Erkko Foundation, and Emil Aaltosen S\"a\"ati\"o for their financial support. 

\section{Introduction}\label{Introducion}
Time-dependent dynamical systems have gathered a lot of interest recently, see for example \cite{AyyerStenlund_2007, AyyerLiveraniStenlund_2009,OttStenlundYoung_2009, Stenlund_2011, StenlundYoungZhang_2013, Stenlund_2014, StenlundSulku_2014, DobbsStenlund_2016,  Stenlund_2015, LeppanenStenlund_2016, ConzeRaugi_2007, Nandori_2012, GuptaOttTorok_2013, MohapatraOtt_2014, Kawan_2015, Kawan_2014, Zhu_etal_2012, ArnouxFisher_2005, Aimino_etal_2015, NicolTorokVaienti_2016, KawanLatushkin_2015, Korepanov_2017, Freitas_2018, Haydn_2017,aimino_rousseau2016, Freitas_2017} and for older papers \cite{KolyadaSnoha_1996, KolyadaMisiurewiczSnoha_1999,Bakhtin_1994}. In this paper we approach time-dependent systems by first providing abstract results estimating the distributions of sums of random vectors and variables. The sum of random variables (vectors) is nearly (multi)normally distributed, when certain decay of correlations properties are satisfied. These conditions are specifically designed so that they can be applied to time-dependent dynamical systems yielding CLTs with a rate of convergence.   

To be more precise, the setting we study in this paper is the following:
Let $(X,\cB,\mu)$ be a probability space and $f\colon X\rightarrow \bR^{d}$ a measurable function, where $d\ge 1$, and let $T_{1},T_{2},...$ be measurable transformations on $(X,\cB)$. We denote $\cT_{k}=T_{k}\circ ...\circ T_{1}$ and define $\cT_{0}=\operatorname{Id}$. We study the problem of approximating the distribution of normalized and centered Birkhoff sum
\beqn
W(N)=\dfrac{1}{\sqrt{N}}\sum_{k=0}^{N-1}f\circ \cT_{k}- \mu(f\circ \cT_{k})
\eeqn
by a normal distribution of $d$ variables. The transformations $T_{i}$, $i=1,2,...$, do not have to preserve the measure $\mu$. This restricts the methods that can be used to prove CLTs for the system in question. Denoting $\bar{f}^{i}=f\circ \cT_{i}- \mu(f\circ \cT_{i})$ we may view $W$ as a normalized sum of random variables $\bar{f}^{i}$.  We show that a method in probability theory, introduced by Stein in \cite{Stein_1972}, can be adapted to this setting.

Stein's method has been researched a lot in probability theory, see \cite{Gotze_1991,RinottRotar_1996,GoldsteinRinott_1996,ChatterjeeMeckes_2008,Meckes_2009,ReinertRollin_2009,Nourdin_etal_2010,Chen_etal_2011,Meckes_2012,Rollin_2013,
Gaunt_2016,Berckmoes_etal_2016}, but has mostly been neglected in theory of dynamical systems. Without obtaining convergence rates, the method is applied to some special cases in \cite{Gordin_1990} and \cite{King_1997}, but to our knowledge the first systematic treatment of Stein's method in the context of dynamical systems was not done until recently in \cite{HellaLeppanenStenlund_2016}. The results of \cite{HellaLeppanenStenlund_2016} were then applied in \cite{Leppanen_2017} for non-uniformly expanding maps. In this paper the results of \cite{HellaLeppanenStenlund_2016} are generalized to be applicable for time-dependent systems.

In Section \ref{results in abstract setting} we state two theorems; one concerning random vectors $W$ and second concerning random variables. The proofs of these results are given in Section \ref{main proofs}. These are then applied in two models demonstrating the usefulness of this method. Applications to time-dependent expanding circle maps and in a quasistatic model introduced in \cite{DobbsStenlund_2016} are stated in the sections \ref{circle maps} and \ref{quasistatic model}, respectively. The results for the applications are proved in Sections \ref{computing rho} and \ref{Proofs of quasistatic model results}.

This paper uses some of the results and proofs in aforementioned papers \cite{HellaLeppanenStenlund_2016} and \cite{DobbsStenlund_2016}. We also make certain improvements to  those results.

\subsection*{Notations and conventions}
Through the paper we reserve the letter $Z$ for a random variable with the standard normal distribution. We write $C=C(x_{1},...,x_{n})$, when $C$ is a constant whose numerical value can be calculated from the variables $x_{1},...,x_{n}$.  

Various norms are used through the paper. For a vector $v\in \bR^{d}$ with components $v_{\alpha}$, $\alpha=1,...,d$, we denote
\beqn
|v|=\max\{|v_{\alpha}|: \alpha=1,...,d\}
\eeqn
and for vector valued functions $\|f\|_{\infty}= \max\{\|f_{\alpha}\|_{\infty}:\alpha=1,...,d \}$.
For a function \(B: \, \bR^d \to \bR^{d'}\), we write \(D^kB\) for the $k$th derivative. We define
\begin{align*}
\Vert D^k B \Vert_{\infty} = \max \{ \|\partial_1^{t_1}\cdots\partial_d^{t_d}B_\alpha\|_{\infty}  :  t_1+\cdots+t_d = k,\, 1\le \alpha\le d'  \}.
\end{align*}
Here $B_\alpha$, $1\le \alpha\le d'$ are the coordinate functions of~$B$.

\section{Results in the abstract setting}\label{results in abstract setting}
Let $(X,\cB,\mu)$ be a probability space and $(f^{i})_{i=0}^{\infty}$ a sequence of random vectors. We also assume that every $\|f^{i}\|_{\infty}$, $i\in\bN_{0} $, have a common upper bound denoted by $\|f\|_{\infty}$. We write $\bar{f}^{i}=f^{i}-\mu(f^{i})$ and given an $N\in \bN_{0}$
\beqn
 W=W(N) =\frac{1}{\sqrt{N}}\sum_{i=0}^{N-1}\bar{f}^{i}.
\eeqn
The covariance matrix of $W(N)$ is denoted by $\Sigma_{N}$, i.e.,
\beqn
\Sigma_{N}= \mu(W^{2})=\frac{1}{N}\sum_{i=0}^{N-1}\sum_{j=0}^{N-1}\mu(\bar{f}^{i}\otimes\bar{f}^{j}). 
\eeqn 
Let $K\in\bN_{0}\cap [0,N-1]$. Then we define 
\beqn
[n]_{K}=\{i\in  \bN: 0\le i\le N-1, |n-i|\le K \}
\eeqn
and
\beqn
W_{n}=\frac{1}{\sqrt{N}}\sum_{i=0,i\notin [n]_{K}}^{N-1}\bar{f}^{i}.
\eeqn

We denote by $\Phi_\Sigma(h)$ the expectation of a function $h:\bR^d\to\bR$ with respect to the $d$-dimensional centered normal distribution $\cN(0,\Sigma)$ with positive definite covariance matrix~$\Sigma\in\bR^{d\times d}$, i.e.,
\beqn
\Phi_\Sigma(h) = \frac{1}{\sqrt{(2\pi)^{d}\det\Sigma}} \int_{\bR^d} e^{-\frac12 w\cdot\Sigma^{-1}w}h(w)\, dw.
\eeqn

The next theorem concerns approximating the distribution of the sum of random vectors by a normal distribution. It is formulated in such a way that it can easily be applied to time-dependent dynamical system: Let $X$ be a state space and $f:X\rightarrow \bR^{d} $ a measurable function, and let $(T_{i})_{i=1}^{\infty}$, $T_{i}:X\rightarrow X$, be a sequence of measurable transformations. Denote $\cT_{i}= T_{i}\circ T_{i-1}\circ ... \circ T_{1}$, when $i\ge 1$, and $\cT_{0}=\operatorname{Id}$. Then 
 simply substituting $f^{i}$ in the theorem by $f\circ \cT_{i}$ yields a result for the centered Birkhoff sums $W=\frac{1}{\sqrt{N}}\sum_{i=0}^{N-1} (f\circ \cT_{i}-\mu(f\circ \cT_{i}))$ on the probability space $(X,\cB,\mu)$.

\begin{thm}\label{thm:main}  Let $(X,\cB,\mu)$ be a probability space and $(f^{i})_{i=0}^{\infty}$ a sequence of random vectors with common upper bound $\|f\|_{\infty}\ge \|f^{i}\|_{\infty}$, for every $i\in \bN_{0}$. Let \(h: \, \bR^d \to \bR\) be three times differentiable with \(\Vert D^k h \Vert_{\infty} < \infty\) for \(1 \le k \le 3\).  Fix integers $N>0$ and $0\le K<N$. Suppose that the following conditions are satisfied:

\begin{itemize}
\item[(A1)]\label{A1} There exist constants $C_2 > 0$ and $C_4 > 0$, and a non-increasing function \(\rho : \, \bN_0 \to \bR_+\) with \(\rho(0) = 1\) and \( \sum_{i=1}^{\infty} i\rho(i) < \infty\), such that for all \(0\le i \le j \le k \le l \le N-1\),
\begin{align*}
|\mu(\bar{f}^{i}_{\alpha}\bar{f}^{j}_{\beta})| \leq C_2\rho(j-i),
\end{align*}
\begin{align*}
&|\mu(\bar{f}^{i}_{\alpha}\bar{f}^{j}_{\beta}\bar{f}^{k}_{\gamma}\bar{f}^{l}_{\delta})| \le C_4\rho(\max\{j-i,l-k\}), \\
&|\mu(\bar{f}^{i}_{\alpha}\bar{f}^{j}_{\beta}\bar{f}^{k}_{\gamma}\bar{f}^{l}_{\delta}) - \mu(\bar{f}^{i}_{\alpha}\bar{f}^{j}_{\beta})\mu(\bar{f}^{k}_{\gamma}\bar{f}^{l}_{\delta})| \le C_4\rho(k-j)
\end{align*}
hold whenever $k\ge 0$; $0\le i\le j \le k \le n < N$; $\alpha,\beta,\gamma,\delta\in\{\alpha',\beta'\}$ and $\alpha',\beta'\in\{1,\dots,d\}$.

\smallskip
\item[(A2)]\label{A2} There exists a function \(\tilde\rho : \, \bN_0 \to \bR_+\) such that
\begin{align*}
|\mu( \bar{f}^n \cdot  \nabla h(v + W_n t) ) | \le \tilde\rho(K)
\end{align*}
holds for all $0\le n \le N-1$, $0\le t\le 1$ and $v\in\bR^d$.\footnote{Recall that $W_n$ depends on $K$.}

\smallskip
\item[(A3)]\label{A3} 
$\Sigma_{N}$
is positive-definite $d\times d$ matrix.
\end{itemize}
Then
\beq
|\mu(h(W)) - \Phi_{\Sigma_{N}}(h)| \le 
C_*\left(\frac{K+1}{\sqrt{N}}+\sum_{i=K+1}^{\infty}\rho(i)\right) + \sqrt N\tilde\rho(K),\label{main thm eq1}
\eeq
where
\beq
C_* = 6d^3\max\{C_2,\sqrt{C_4}\}\left(\Vert f \Vert_{\infty} \Vert D^3 h \Vert_{\infty}+ \Vert D^2 h\Vert_{\infty}\right)\sqrt{\sum_{i=0}^{\infty} (i+1)\rho(i)}\label{main thm eq2}
\eeq

is independent of $N$ and $K$.

\end{thm}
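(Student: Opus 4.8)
The plan is to use Stein's method for multivariate normal approximation in the standard way, adapted to the non-stationary, non-measure-preserving setting. Let $\Sigma = \Sigma_N$ and let $\phi = \phi_h$ solve the Stein equation
\beqn
x\cdot\nabla\phi(x) - \operatorname{tr}\big(\Sigma\, D^2\phi(x)\big) = h(x) - \Phi_\Sigma(h),
\eeqn
so that $\mu(h(W)) - \Phi_\Sigma(h) = \mu\big(W\cdot\nabla\phi(W) - \operatorname{tr}(\Sigma D^2\phi(W))\big)$. Standard regularity estimates for this equation bound $\|D^k\phi\|_\infty$ in terms of $\|D^k h\|_\infty$ for $k=2,3$ (these are quoted from the Stein's method literature, and are already implicit in the structure of $C_*$). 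The task is therefore to estimate $\mu(W\cdot\nabla\phi(W)) - \mu(\operatorname{tr}(\Sigma D^2\phi(W)))$.

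The first step is to split $W = \frac{1}{\sqrt N}\sum_{n=0}^{N-1}\bar f^n$ and, for each $n$, replace $\phi$-derivatives evaluated at $W$ by the same evaluated at $W_n$ (the partial sum with the block $[n]_K$ removed) via Taylor expansion, so as to decouple $\bar f^n$ from the bulk. Writing $R_n = W - W_n = \frac{1}{\sqrt N}\sum_{i\in[n]_K}\bar f^i$, one expands
\beqn
\nabla\phi(W) = \nabla\phi(W_n) + D^2\phi(W_n) R_n + O(\|D^3\phi\|_\infty |R_n|^2),
\eeqn
and similarly $D^2\phi(W) = D^2\phi(W_n) + O(\|D^3\phi\|_\infty |R_n|)$. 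After summing over $n$, the leading term $\frac1N\sum_n \mu(\bar f^n\otimes \bar f^i)$ with $i$ ranging over $[n]_K$ must be compared with $\Sigma_N$; the discrepancy is exactly a tail $\sum_{i>K}\rho(i)$ contribution by (A1), giving the $\sum_{i=K+1}^\infty\rho(i)$ term in \eqref{main thm eq1}. The terms one still has to control are: (i) $\mu(\bar f^n\cdot \nabla\phi(W_n))$, handled directly by hypothesis (A2) with $v=0$, $t=1$ and the chain rule, contributing the $\sqrt N\,\tilde\rho(K)$ term; (ii) the second-order correction involving $\mu(\bar f^n\otimes\bar f^i\,D^2\phi(W_n))$ which one further Taylor-expands and estimates using the covariance and the decorrelation bounds in (A1); and (iii) the Taylor remainders, bounded using $\|D^3\phi\|_\infty\lesssim \|f\|_\infty\|D^3 h\|_\infty + \|D^2 h\|_\infty$ (the source of that bracketed factor in \eqref{main thm eq2}) together with the fact that $|R_n|\le \frac{2K+1}{\sqrt N}\|f\|_\infty$ and $\mu(|R_n|^2)$ is controlled by the $C_2$-bound, yielding the $\frac{K+1}{\sqrt N}$ term after summing $n$ from $0$ to $N-1$ and dividing by $\sqrt N$.

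The genuinely delicate point, and the one I expect to be the main obstacle, is step (ii): controlling the fourth-order-type terms $\mu\big(\bar f^n_\alpha\,\bar f^i_\beta\,\partial^2_{\gamma\delta}\phi(W_n)\big)$ and, after one more Taylor step, $\mu\big(\bar f^n_\alpha\,\bar f^i_\beta\,\bar f^k_\gamma\,\partial^3\phi(W_n)\big)$, uniformly in $N$. Here one cannot simply factor the expectation because $\mu$ is not $\cT_i$-invariant; instead one must exploit the two inequalities in (A1) — the uniform fourth-moment bound $C_4\rho(\max\{j-i,l-k\})$ and the product-splitting bound $C_4\rho(k-j)$ — choosing for each index configuration whichever gap is largest, and then summing the resulting $\rho$ over the $O(K)$ indices in each block and over all $n$. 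The bookkeeping is exactly parallel to the stationary case treated in \cite{HellaLeppanenStenlund_2016}; the combinatorial sums $\sum_i i\rho(i)$ converge by (A1), which is where $\sqrt{\sum_i (i+1)\rho(i)}$ enters $C_*$, and a final use of (A3) guarantees that the Stein equation has a solution with the required bounds. Collecting (i)–(iii) and the $\Sigma_N$-discrepancy gives \eqref{main thm eq1} with the constant \eqref{main thm eq2}.
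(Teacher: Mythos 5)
Your proposal is correct and follows essentially the same route as the paper: solve the Stein equation, decouple $\bar f^n$ from $W_n$ by Taylor expansion over the $K$-block, bound the decoupled term by (A2), compare the truncated covariance $\frac{1}{N}\sum_n\sum_{m\in[n]_K}\mu(\bar f^n\otimes\bar f^m)$ with $\Sigma_N$ to get the $\sum_{i>K}\rho(i)$ tail, and control the fluctuation of the quadratic form via the fourth-moment conditions in (A1). The paper organizes this as an exact three-term decomposition (its Theorem \ref{thm:pre}) and imports the two hard estimates verbatim from \cite{HellaLeppanenStenlund_2016}, exactly as you anticipate.
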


This theorem is similar to Theorem 2.1 in \cite{HellaLeppanenStenlund_2016}. The theorem above can be applied to dynamical systems where transformations are time-dependent.
As a side note, the constant $C_{*}$ in \eqref{main thm eq2} is better than in \cite{HellaLeppanenStenlund_2016}.
 
Let $f^{i}$, $i\in \bN_{0}$, be random variables. Then we denote the variance of $W(N)$ by
\beqn
\sigma^{2}_{N}=\mu(W^2)=\frac{1}{N}\sum_{i=0}^{N-1}\sum_{j=0}^{N-1}\mu(\bar{f}^{i}\bar{f}^{j}).
\eeqn
   
For univariate $f^{i}$ we can improve the result of the previous theorem. Instead of  three times differentiable functions $h$, we can assume that $h$ is less regular, namely $1$-Lipschitz, and still get an upper bound result for $\mu(h(W)) - \Phi_{\sigma^{2}_{N}}(h)$. A downside is that the bound obtained is inversely proportional to the variance $\sigma_{N}^{2}$. To state the result rigorously, we introduce the concept of Wasserstein distance.

Let $X_{1}$ and $X_{2}$ be two random variables in $(X,\cB,\mu)$. Then
the Wasserstein distance between $X_{1}$ and $X_{2}$ is defined as
\beqn
d_\mathscr{W}(X_{1},X_{2}) = \sup_{h\in\mathscr{W}}|\mu(h(X_{1})) - \mu(h(X_{2}))|,
\eeqn

where
\beqn
\mathscr{W} = \{h:\bR\to\bR\,:\,|h(x) - h(y)| \le |x-y|\}
\eeqn
is the class of all $1$-Lipschitz functions. 

\begin{thm}\label{main thm 1d}
 Let $(X,\cB,\mu)$ be a probability space and $(f^{i})_{i=0}^{\infty}$ a sequence of random variables with common upper bound $\|f\|_{\infty}$. Fix integers $N>0$ and $0\le K<N$. Suppose that the following conditions are satisfied.

\begin{itemize}
\item[(B1)]\label{B1} There exist constants \(C_2,C_4\) and a non-increasing function \(\rho : \, \bN \to \bR\) with \(\rho(0) = 1\), such that for all \(0 \le i \le j \le k \le l \le N-1\),
\begin{align*}
|\mu(\bar{f}^{i}\bar{f}^{j})| \leq C_2\rho(j-i),
\end{align*}
\begin{align*}
&|\mu(\bar{f}^{i}\bar{f}^{j}\bar{f}^{k}\bar{f}^{l})| \le C_4\rho(\max\{j-i,l-k\}), \\
&|\mu(\bar{f}^{i}\bar{f}^{j}\bar{f}^{k}\bar{f}^{l}) - \mu(\bar{f}^{i}\bar{f}^{j})\mu(\bar{f}^{k}\bar{f}^{l})| \le C_4\rho(k-j).
\end{align*}
\item[(B2)]\label{B2} There exists a function \(\tilde\rho : \, \bN_0 \to \bR_+\) such that, given a differentiable $A:\bR\to\bR$ with $A'$ absolutely continuous and $\max_{0\le k\le 2}\|A^{(k)}\|_\infty \le 1$,
\begin{align*}
|\mu( \bar{f}^n A(W_{n}) ) | \le \tilde\rho(K)
\end{align*}
holds for all $0\le n < N$.
\item[(B3)]\label{B3}
$\sigma_{N}^{2}>0$.
\end{itemize}
Then the Wasserstein distance $d_\mathscr{W}(W,\sigma_{N}Z)$ is bounded from above by 

\beqn
C_{\#}\!\left(\frac{K+1}{\sqrt{N}} + \sum_{i= K+1}^{\infty}  \rho(i) \right) + C_{\#}'\sqrt N\tilde\rho(K),
\eeqn
where
\beqn
C_{\#} = 12 \max\{\sigma_{N}^{-1},\sigma_{N}^{-2}\} \max\{C_2,\sqrt{C_4}\} (1 + \Vert f \Vert_{\infty}) \sqrt{\sum_{i = 0}^{\infty}(i+1) \rho(i)}
\eeqn
and
\beqn
C_{\#}' = 2\max\{1,\sigma_{N}^{-2}\}
\eeqn
are independent of $K$.
\end{thm}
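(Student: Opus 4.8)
\emph{Overall approach.} The plan is to run Stein's method for the one-dimensional Gaussian $\cN(0,\sigma_N^2)$ and to re-use, almost word for word, the estimates from the proof of Theorem~\ref{thm:main} with $d=1$; the one new ingredient is the one-dimensional Stein equation, which turns a merely Lipschitz test function into a $C^1$ function with absolutely continuous derivative and explicitly bounded first two derivatives. Since $d_\mathscr{W}(W,\sigma_N Z)=\sup_{h\in\mathscr{W}}|\mu(h(W))-\Phi_{\sigma_N^2}(h)|$, I fix $h\in\mathscr{W}$ and let $A=A_h$ be the standard bounded solution of
\[
\sigma_N^2 A'(w)-wA(w)=h(w)-\Phi_{\sigma_N^2}(h).
\]
Substituting $A(w)=B(w/\sigma_N)$ turns this into the classical $\cN(0,1)$ Stein equation for a $1$-Lipschitz function, so the classical solution bounds give $\|A\|_\infty\le 2$, $\|A'\|_\infty\le 2\sigma_N^{-1}$ and, because $h$ is Lipschitz, $A''$ defined a.e.\ with $\|A''\|_\infty\le 2\sigma_N^{-2}$; in particular $A'$ is absolutely continuous,
\[
\max_{0\le k\le 2}\|A^{(k)}\|_\infty\le 2\max\{1,\sigma_N^{-2}\}=C_{\#}',\qquad \max\{\|A'\|_\infty,\|A''\|_\infty\}\le 2\max\{\sigma_N^{-1},\sigma_N^{-2}\}.
\]
These bounds are the only place where $\sigma_N$ enters the argument, which is why it appears in $C_{\#}$ and $C_{\#}'$ exactly as stated. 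By the defining property of the Stein equation, $\mu(h(W))-\Phi_{\sigma_N^2}(h)=\sigma_N^2\mu(A'(W))-\mu(WA(W))$, so it remains to bound the right-hand side uniformly over $h\in\mathscr{W}$.

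\emph{Decomposition.} Writing $V_n:=W-W_n=\frac1{\sqrt N}\sum_{i\in[n]_K}\bar f^i$ and expanding $A(W)=A(W_n)+V_n\int_0^1 A'(W_n+sV_n)\,ds$ yields
\[
\mu(WA(W))=\frac1{\sqrt N}\sum_{n=0}^{N-1}\mu(\bar f^n A(W_n))+\frac1N\sum_{n=0}^{N-1}\sum_{i\in[n]_K}\int_0^1\mu\big(\bar f^n\bar f^i A'(W_n+sV_n)\big)\,ds.
\]
The first sum is bounded by $C_{\#}'\sqrt N\,\tilde\rho(K)$ upon applying (B2) to $A/C_{\#}'$. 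For the second sum I compare it with $\sigma_N^2\mu(A'(W))=\frac1N\sum_n\sum_j\mu(\bar f^n\bar f^j)\mu(A'(W))$: splitting the inner sum according to whether $j\in[n]_K$, the ``far'' part is bounded by $2C_2\|A'\|_\infty\sum_{i=K+1}^{\infty}\rho(i)$ by the two-point inequality in (B1), while for the ``near'' part one must control, for each $n$, each $i\in[n]_K$ and each $0\le s\le 1$, the quantity $\mu(\bar f^n\bar f^i)\mu(A'(W))-\mu(\bar f^n\bar f^i A'(W_n+sV_n))$. This is the estimate carried out along the lines of the proof of Theorem~\ref{thm:main}, now with $A'$ and $A''$ in the roles of the second and third derivatives of the test function there: one replaces $A'(W_n+sV_n)$ by $A'$ evaluated at a partial sum from which a bounded number of blocks of length $\le 2K+1$ (around $n$ and around $i$) have been removed, estimates each replacement error by $\|A''\|_\infty$ times the $\ell^1$-size of the removed increments (which is $O(\|f\|_\infty(K+1)/\sqrt N)$), and bounds the resulting approximately-decorrelated terms using the four-point inequalities in (B1).

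\emph{Main obstacle and conclusion.} The delicate part is the bookkeeping in the ``near'' estimate: it must be organised so that its total contribution has size
\[
6\max\{C_2,\sqrt{C_4}\}\,(1+\|f\|_\infty)\max\{\|A'\|_\infty,\|A''\|_\infty\}\sqrt{\sum_{i=0}^{\infty}(i+1)\rho(i)}\,\Big(\frac{K+1}{\sqrt N}+\sum_{i=K+1}^{\infty}\rho(i)\Big)
\]
rather than something of order $(K+1)^2/\sqrt N$; this is precisely where the \emph{four}-point hypotheses of (B1) are needed (a second-moment bound would not suffice), and the argument is essentially identical to the corresponding one in Theorem~\ref{thm:main}. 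A secondary point to verify is that the canonical solution $A_h$ is bounded even when $h$ is only Lipschitz (hence of at most linear growth); this follows from the explicit integral representation of $A_h$ together with the Mills-ratio bound $e^{w^2/2}\int_w^\infty e^{-y^2/2}\,dy\le\sqrt{\pi/2}$. Finally, substituting $\max\{\|A'\|_\infty,\|A''\|_\infty\}\le 2\max\{\sigma_N^{-1},\sigma_N^{-2}\}$ into the last display, collecting it with the ``far'' and (B2) contributions, and taking the supremum over $h\in\mathscr{W}$, gives the asserted bound on $d_\mathscr{W}(W,\sigma_N Z)$ with $C_{\#}$ and $C_{\#}'$ exactly as in the statement.
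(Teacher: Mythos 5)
Your proposal is correct and follows essentially the same route as the paper: the one-dimensional Stein equation with the standard bounds $\|A\|_\infty\le 2$, $\|A'\|_\infty\lesssim\sigma_N^{-1}$, $\|A''\|_\infty\le 2\sigma_N^{-2}$ on its solution, the decomposition of $\sigma_N^2\mu(A'(W))-\mu(WA(W))$ into a Taylor-remainder term, a covariance-matching term split into near ($j\in[n]_K$) and far parts, and the $\bar f^nA(W_n)$ term handled by (B2) after normalizing $A$ by $C_{\#}'$. The paper packages the first two estimates as a preliminary theorem imported from \cite{HellaLeppanenStenlund_2016}, but the underlying bounds and the way the constants $C_{\#}$ and $C_{\#}'$ arise are the same as in your argument.
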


Note that if $\sigma_{N}=0$, then trivially $d_\mathscr{W}(W,\sigma_{N}Z)=0$.

\section{Application I: time-dependent expanding maps}\label{circle maps}
In this section we present some CLTs in a concrete model of expanding circle maps. They are proved in Section \ref{computing rho} by applying Theorems \ref{thm:main} and \ref{main thm 1d}. We also give a result in the case, where transformations are chosen randomly.

\subsection{The model}
Let $\bS^{1}$ be the state space and let $\cM$ denote the set of $C^{2}$ expanding circle maps $T:\bS^{1}\rightarrow \bS^{1}$ with the following bounds:
\beq
\inf T'= \lambda>1,\qquad \Vert T''\Vert_{\infty}\le A_{*}.\label{eq:deriv bounds}
\eeq
Let $f\colon \bS^{1}\rightarrow\bR^{d}$. We write
$
\operatorname{Lip}(f)=\max\{\operatorname{Lip}(f_{\alpha}):\alpha\in 1,...,d \}
$
and
$
\|f\|_{\operatorname{Lip}}=\|f\|_{\infty}+\operatorname{Lip}(f).
$
From now on we assume that all transformations belong to $\cM$ and that $f$ is Lipschitz continuous, i.e., all the coordinate 
functions are Lipschitz continuous.
Furthermore we assume that the initial probability measure $\mu$ with density $\varrho$ with respect to Lebesgue measure $m$ on $\bS^{1}$ is such that $\log \varrho$ is Lipschitz continuous with constant $L_{0}=\operatorname{Lip}(\log \varrho)$. Notice that this implies that $\varrho=e^{\log \varrho}$ is also Lipschitz continuous and $\varrho\ge c>0$ with some $c\in \bR_{+}$. $W$ is defined as in the 
abstract setting in the previous section, as are $\Sigma_{N}$ and $\sigma^{2}_{N}$. 

The results in this section contain constants $\vartheta, C_{2}, C_{4}$ and $B_{0}$. 
Some exact bounds to their values could be calculated by using the results of section 5 
of \cite{DobbsStenlund_2016}, but it is omitted here. Instead we just state here the most important 
features of those constants. First of all $\vartheta \in \,]0,1[$ measures the decorrelation speed of the 
system and depends only on the 
model constants $\lambda$ and $A_{*}$. It is defined as in Lemma 5.6 of \cite{DobbsStenlund_2016}. In particular $\vartheta\ge\lambda^{-1}$. Constants  $C_{2}>0$ and $C_{4}>0$ depend on $\lambda,A_{*},\|f\|
_{\operatorname{Lip}}$ and Lipschitz constant of $\varrho$, and are introduced in Lemma \ref{rho lem multivariate}. The last constant  $B_{0}= B_{0}(L_{0},\lambda,A_{*})>0$ is defined after Lemma \ref{pw lemma 1}.

Now we are ready to present the first theorem concerning expanding circle maps. 
\begin{thm}\label{multivariate circle theorem}
Let $(T_{i})_{i=1}^{\infty}\subset \cM$ be a sequence of transformations in the model $\cM$. Let $h\colon \bR^{d}\rightarrow \bR$ be three times differentiable with $\|D^{k}h\|_{\infty}< \infty$, $k=1,2,3$. Suppose that $N\ge 16/(1-\vartheta)^{2} $ is such that the matrix $\Sigma_{N}$ is positive definite. Then

\begin{align*}
|\mu(h(W)) - \Phi_{\Sigma_{N}}(h)| \le CN^{-\frac{1}{2}} \log N,
\end{align*}
where

\begin{align*}
C= &\,\frac{30d^3\max\{C_2,\sqrt{C_4}\}\left(\Vert f \Vert_{\infty} \Vert D^3 h \Vert_{\infty}+ \Vert D^2 h\Vert_{\infty}\right)}{(1-\vartheta)^{2}}
\\
&+ 2d^{2} \|D^{2}h\|_{\infty}\dfrac{\| f\|_{\operatorname{Lip}}^{2}}{\vartheta^{-\frac{1}{2}} -\vartheta^{\frac{1}{2}}}
+
4d B_{0} \| f\|_{\operatorname{Lip}}\|Dh\|_{\infty} 
+
\frac{2d\Vert Dh \Vert_{\infty}\| f\|_{\operatorname{Lip}}}{\vartheta^{\frac{1}{2}}}.
\end{align*}

\end{thm}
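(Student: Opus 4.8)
The plan is to deduce Theorem \ref{multivariate circle theorem} from the abstract Theorem \ref{thm:main} by verifying conditions (A1)--(A3) for the circle map model and then choosing $K$ optimally. The covariance bounds in (A1) and the estimate needed for (A2) should follow from the decay of correlations for sequential expanding maps established in \cite{DobbsStenlund_2016}, which is why the statement refers to constants $\vartheta,C_2,C_4,B_0$ coming from that paper and from the auxiliary lemmas (Lemma \ref{rho lem multivariate}, Lemma \ref{pw lemma 1}) quoted in this section. Concretely, first I would apply Lemma \ref{rho lem multivariate} to obtain (A1) with $\rho(i)=\vartheta^{i}$ for $i\ge 1$ and $\rho(0)=1$; note $\sum_{i\ge 1} i\vartheta^{i}<\infty$ and $\sum_{i\ge 0}(i+1)\vartheta^{i}=(1-\vartheta)^{-2}$, so $C_*$ in \eqref{main thm eq2} becomes $6d^3\max\{C_2,\sqrt{C_4}\}(\|f\|_\infty\|D^3h\|_\infty+\|D^2h\|_\infty)(1-\vartheta)^{-1}$.

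The second step is to verify (A2), i.e.\ to bound $|\mu(\bar f^n\cdot\nabla h(v+W_n t))|$ by some $\tilde\rho(K)$. Here $\bar f^n = f\circ\cT_n - \mu(f\circ\cT_n)$ and $W_n$ is a sum of $\bar f^i$ with $|i-n|>K$; the point is that $W_n$ is ``built from the distant past and future'' relative to time $n$. I expect the argument to split $W_n = W_n^- + W_n^+$ into the indices $i<n-K$ and $i>n+K$. For the future part one uses that conditioning/pushing forward by $\cT_{n-1}$ decouples $f\circ\cT_n$ from the later terms up to an exponentially small error, giving a term of size $\vartheta^{K}$ times Lipschitz norms; for the past part one uses that $f\circ\cT_n$ composed with the transfer operators applied to the density has nearly zero integral against anything, again with an exponential gain $\vartheta^{K}$ coming from the $K$ steps separating index $n$ from the nearest index $<n-K$. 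The resulting bound is $\tilde\rho(K) = C'\vartheta^{K}$ for a constant $C'$ depending on $B_0$, $\|f\|_{\operatorname{Lip}}$, and $\|Dh\|_\infty$ (this is where the terms $4dB_0\|f\|_{\operatorname{Lip}}\|Dh\|_\infty$ and $2d\|Dh\|_\infty\|f\|_{\operatorname{Lip}}\vartheta^{-1/2}$ in the constant $C$ will originate). The extra $2d^2\|D^2h\|_\infty\|f\|_{\operatorname{Lip}}^2/(\vartheta^{-1/2}-\vartheta^{1/2})$ term presumably comes from replacing the ``true'' covariance structure that appears after a second-order Taylor expansion inside the Stein argument by $\Sigma_N$, controlled again by a geometric series in $\vartheta$; I would track this through the proof of Theorem \ref{thm:main} rather than rederive it.

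Condition (A3) is assumed in the hypotheses ($\Sigma_N$ positive definite), so nothing is needed there. Given (A1)--(A3), Theorem \ref{thm:main} yields
\begin{align*}
|\mu(h(W))-\Phi_{\Sigma_N}(h)| \le C_*\Bigl(\frac{K+1}{\sqrt N}+\sum_{i=K+1}^\infty\vartheta^{i}\Bigr) + \sqrt N\,C'\vartheta^{K}
= C_*\Bigl(\frac{K+1}{\sqrt N}+\frac{\vartheta^{K+1}}{1-\vartheta}\Bigr)+C'\sqrt N\,\vartheta^{K}.
\end{align*}
The final step is the optimization over $K$: choosing $K = \lceil \tfrac{1}{2}\log_{1/\vartheta} N\rceil$ (so that $\vartheta^{K}\le N^{-1/2}$) makes $\sqrt N\vartheta^{K}\le 1$ and $\frac{K+1}{\sqrt N}\le C\,\frac{\log N}{\sqrt N}$ with the constant absorbing $\frac{1}{2}\log(1/\vartheta)^{-1}$; here the hypothesis $N\ge 16/(1-\vartheta)^2$ guarantees $0\le K<N$ and lets one bound $K+1$ cleanly and fold the numerical constants (turning the $6$ of \eqref{main thm eq2} into the $30$ of the theorem). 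Collecting the pieces gives the bound $CN^{-1/2}\log N$ with $C$ as stated.

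The main obstacle I anticipate is step two, the verification of (A2): one must carefully exploit the sequential decay-of-correlations machinery of \cite{DobbsStenlund_2016} to decouple $f\circ\cT_n$ from $W_n$ simultaneously in the forward and backward directions, controlling the nonlinearity of $\nabla h$ uniformly in $v$ and $t$, and keeping the dependence on $B_0$, $\vartheta$ and the Lipschitz norm of $f$ explicit so that the advertised constant $C$ comes out. The remaining steps (plugging in $\rho(i)=\vartheta^i$, summing the geometric series, and optimizing $K$) are routine.
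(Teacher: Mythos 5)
Your overall strategy matches the paper's: verify (A1) via Lemma \ref{rho lem multivariate} with $\rho(i)=\vartheta^i$, verify (A2) with an exponentially decaying $\tilde\rho(K)$, invoke Theorem \ref{thm:main}, and take $K$ of order $\log N$. However, there are two genuine problems. First, the verification of (A2) — which you correctly identify as the main obstacle — is left as a sketch, and the sketch does not reflect how the estimate actually works. In the paper (Theorem \ref{f nabla W lemma}) the past is not decoupled from $f\circ\cT_n$ by a separate mixing argument; instead one partitions $\bS^1$ by $\cT_B$ with $B=\lfloor n-K/2\rfloor$, approximates the backward portion of $W_n$ by a constant on each partition element (Lemma \ref{constant approx multivariate}, cost $\lambda^{-\lfloor K/2\rfloor}$), replaces the conditional densities by $\cL_n\varrho$ (Corollary \ref{pushforward cor}, cost $\vartheta^{\lceil K/2\rceil}$), and only then decorrelates $f\circ\cT_n$ from the forward sum via Lemma \ref{splitting lemma}. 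Because the gap $K$ is split in half between these steps, the resulting decay is $\tilde\rho(K)\sim\vartheta^{K/2}$, not $\vartheta^{K}$ as you assert. (Relatedly, the term $2d^2\|D^2h\|_\infty\|f\|_{\operatorname{Lip}}^2/(\vartheta^{-1/2}-\vartheta^{1/2})$ in $C$ comes from this constant-approximation error fed through $D^2h$ inside $\tilde\rho(K)$, not from a covariance replacement inside the Stein argument.)

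Second, and independently of the exponent issue, your choice $K=\lceil\tfrac12\log_{1/\vartheta}N\rceil$ does not yield the claimed rate: it gives $\vartheta^{K}\le N^{-1/2}$, hence $\sqrt N\,\tilde\rho(K)\le C'$, a bound that is merely $O(1)$ rather than $O(N^{-1/2}\log N)$, so the final estimate does not converge to zero at the advertised rate (or at all, from this bound). Even granting your (incorrect) $\tilde\rho(K)=C'\vartheta^K$ you would need $K\ge\log_{1/\vartheta}N$; with the true $\tilde\rho(K)\sim\vartheta^{K/2}$ one needs $K\approx 2\log_{1/\vartheta}N$, which is exactly the paper's choice $K=\lceil 2\log N/(-\log\vartheta)\rceil$, forcing $\vartheta^{K/2}\le N^{-1}$ and hence $\sqrt N\,\tilde\rho(K)=O(N^{-1/2})$. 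The hypothesis $N\ge 16/(1-\vartheta)^2$ and Lemma \ref{Choosing K Lemma} are precisely what guarantee that this larger $K$ still satisfies $K<N$ and $K+1\le 4\log N/(1-\vartheta)$.
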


In addition to the previous theorem, for univariate $f$, the following theorem also holds:
\begin{thm}\label{circle main thm}
Let $(T_{i})_{i=1}
^{\infty}\subset \cM$ be a sequence of transformations 
in the model $\cM$. Let $N\ge 16/(1-\vartheta)^{2} $ and $\sigma_{N}\ge C_{0}N^{-p}$, where 
$C_{0}>0$, $p\ge 0$. Then
\begin{align*}
d_\mathscr{W}(W,\sigma_{N}Z) \le \tilde{C}\max\{1,C_{0}^{-2}\}N^{-\frac{1}{2}+2p} \log N, 
\end{align*}
where 

\beq
\tilde{C}=\frac{60\max\{C_2,\sqrt{C_4}\} (1 + \Vert f \Vert_{\infty}) }{(1-\vartheta)^{2}}+\dfrac{4\| f\|_{\operatorname{Lip}}^{2}}{\vartheta^{-\frac{1}{2}} -\vartheta^{\frac{1}{2}}}
+
8 B_{0} \| f\|_{\operatorname{Lip}}
+
\frac{4\| f\|_{\operatorname{Lip}}}{\vartheta^{\frac{1}{2}}}\label{eq:Ctilde}
\eeq
is independent of $N$.
\end{thm}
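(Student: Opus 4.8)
The plan is to deduce Theorem~\ref{circle main thm} from the abstract Theorem~\ref{main thm 1d}, applied to the sequence $f^{i}=f\circ\cT_{i}$ on $(\bS^{1},\cB,\mu)$; what is left is to verify (B1)--(B3) in the present model, to choose the cut-off $K$, and to simplify the resulting estimate. (In fact (B1) is the $d=1$ case of (A1), and the verification of (B2) parallels that of (A2) used for Theorem~\ref{multivariate circle theorem}, with an antiderivative of $A$ in place of $h$ and all derivative norms of $h$ set equal to $1$, so most of the work has already been done by the time this theorem is proved.)

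Condition~(B3) is free: $\sigma_{N}\ge C_{0}N^{-p}$ with $C_{0}>0$ forces $\sigma_{N}^{2}>0$. Condition~(B1) asks for exponential decay of the two-point and four-point correlations of $(f\circ\cT_{i})_{i}$; this is Lemma~\ref{rho lem multivariate} specialised to $d=1$, which supplies the constants $C_{2},C_{4}$ and the admissible choice $\rho(i)=\vartheta^{i}$, with $\vartheta\in\,]0,1[$ the contraction rate of the transfer-operator cocycle of the maps in $\cM$. Note that $\sum_{i\ge 0}(i+1)\rho(i)=(1-\vartheta)^{-2}$, so $\sqrt{\sum_{i\ge 0}(i+1)\rho(i)}=(1-\vartheta)^{-1}$, which accounts for one of the factors $(1-\vartheta)^{-1}$ in $C_{\#}$.

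The substantial point is (B2): one must produce $\tilde\rho(K)$ with $|\mu(\bar f^{n}A(W_{n}))|\le\tilde\rho(K)$ for all $n$ and every admissible $A$. Since $W_{n}$ omits every index in the window $[n]_{K}$, the local-in-time observable $\bar f^{n}=(f-\mu(f\circ\cT_{n}))\circ\cT_{n}$ should be nearly decoupled from $A(W_{n})$. I would split $W_{n}$ into its contributions from times before and after $n$, use the transfer operators of the $T_{i}$ to rewrite the correlation $\mu(\bar f^{n}A(W_{n}))$ --- exploiting that the density cocycle of $\mu$ under the maps has uniformly bounded log-Lipschitz norm $\le B_{0}=B_{0}(L_{0},\lambda,A_{*})$ by the cone argument of Section~5 of \cite{DobbsStenlund_2016} --- and apply the decorrelation estimate of Lemma~\ref{pw lemma 1}; a first-order Taylor expansion of $A$ about $0$ reduces the nonlinear remainder to the two- and four-point correlations already controlled in (B1). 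Summing the resulting geometric series over the window width gives $\tilde\rho(K)$ of the form $(\text{const})\cdot\vartheta^{K}$, the constant being a sum of the three quantities $\|f\|_{\operatorname{Lip}}^{2}(\vartheta^{-1/2}-\vartheta^{1/2})^{-1}$, $B_{0}\|f\|_{\operatorname{Lip}}$ and $\|f\|_{\operatorname{Lip}}\vartheta^{-1/2}$ visible in $\tilde C$ (up to the explicit numerical factors). I expect this decorrelation estimate to be the main obstacle: one has to control Lipschitz constants under composition with the strongly expanding iterates $\cT_{i}$ (under which a Lipschitz observable becomes highly oscillatory) together with the dependence on the regularity of the initial density --- precisely what the two lemmas above package.

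Granting (B1)--(B3), Theorem~\ref{main thm 1d} yields
\[
d_{\mathscr{W}}(W,\sigma_{N}Z)\le C_{\#}\Bigl(\frac{K+1}{\sqrt N}+\frac{\vartheta^{K+1}}{1-\vartheta}\Bigr)+C_{\#}'\sqrt N\,\tilde\rho(K).
\]
I would then take $K=\bigl\lceil\log N/\log(\vartheta^{-1})\bigr\rceil$, so that $K=\Theta(\log N)$ and $\vartheta^{K}\le N^{-1}$; since $\log(\vartheta^{-1})\ge 1-\vartheta$ one has $K+1\le(1-\vartheta)^{-1}\log N+2$, and the standing hypothesis $N\ge 16/(1-\vartheta)^{2}$ guarantees $0\le K<N$, so the theorem applies. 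Using $\sigma_{N}\ge C_{0}N^{-p}$ to bound both $\max\{\sigma_{N}^{-1},\sigma_{N}^{-2}\}$ and $\max\{1,\sigma_{N}^{-2}\}$ by $\max\{1,C_{0}^{-2}\}N^{2p}$, so that $C_{\#}\le 12\max\{1,C_{0}^{-2}\}N^{2p}\max\{C_{2},\sqrt{C_{4}}\}(1+\|f\|_{\infty})(1-\vartheta)^{-1}$ and $C_{\#}'\le 2\max\{1,C_{0}^{-2}\}N^{2p}$, and substituting the chosen $K$ --- the first summand then contributing a multiple of $(1-\vartheta)^{-2}\max\{C_{2},\sqrt{C_{4}}\}(1+\|f\|_{\infty})N^{-1/2+2p}\log N$, and, via $\vartheta^{K}\le N^{-1}$, the $C_{\#}'$-term a multiple of $N^{-1/2+2p}$ --- one collects everything into the asserted bound $\tilde C\max\{1,C_{0}^{-2}\}N^{-1/2+2p}\log N$ with $\tilde C$ as in~\eqref{eq:Ctilde}: the $(1-\vartheta)^{-2}$ in the first summand of $\tilde C$ comes from the $(1-\vartheta)^{-1}$ in $K+1$ times the $(1-\vartheta)^{-1}$ inside $C_{\#}$, and the remaining three summands from (twice) the constant in $\tilde\rho(K)$.
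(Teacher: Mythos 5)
Your overall route is the paper's: verify (B1)--(B3) via Lemma~\ref{rho lem multivariate} and the model-specific $\tilde\rho$, apply Theorem~\ref{main thm 1d} with a logarithmic cut-off $K$, and absorb $\sigma_N^{-2}\le C_0^{-2}N^{2p}$ into the constant. However, there is a genuine quantitative gap in the last two steps. The admissible $\tilde\rho(K)$ that the model actually delivers (Theorem~\ref{fAW lemma}, i.e.\ \eqref{univar rho tilde}) decays like $\vartheta^{K/2}$, not like $\vartheta^{K}$ as you assert: the decoupling argument splits the gap of length $K$ in half, spending $\lfloor K/2\rfloor$ expansion steps to make the past Birkhoff sum nearly constant on the partition induced by $\cT_{\lfloor n-K/2\rfloor}$ (Lemma~\ref{constant approx multivariate}, giving $\lambda^{-\lfloor K/2\rfloor}$) and the remaining $\lceil K/2\rceil$ steps for transfer-operator mixing (Corollary~\ref{pushforward cor} and Lemma~\ref{splitting lemma}, giving $\vartheta^{\lceil K/2\rceil}$). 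With your choice $K=\lceil \log N/\log(\vartheta^{-1})\rceil$ one only gets $\vartheta^{K/2}\le N^{-1/2}$, hence
\begin{equation*}
C_{\#}'\sqrt N\,\tilde\rho(K)\;\le\; 2\max\{1,C_0^{-2}\}N^{2p}\cdot \sqrt N\cdot C\,N^{-1/2}\;=\;O\!\left(N^{2p}\right),
\end{equation*}
which does not decay and destroys the claimed bound $N^{-1/2+2p}\log N$. The fix is exactly the paper's choice $K=\bigl\lceil 2\log N/(-\log\vartheta)\bigr\rceil$, so that $\vartheta^{K}\le N^{-2}$ and $\vartheta^{K/2}\le N^{-1}$; then $\sqrt N\,\tilde\rho(K)=O(N^{-1/2})$ and the three trailing summands of $\tilde C$ in \eqref{eq:Ctilde} appear as twice the coefficients of \eqref{univar rho tilde} with $\vartheta^{(K-1)/2}\le \vartheta^{-1/2}N^{-1}$, as you intended. (The condition $K<N$ for this larger $K$ is what Lemma~\ref{Choosing K Lemma}(ii) and the hypothesis $N\ge 16/(1-\vartheta)^2$ are for.)

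Two smaller remarks. First, your description of the verification of (B2) --- ``a first-order Taylor expansion of $A$ about $0$ reduces the nonlinear remainder to the two- and four-point correlations of (B1)'' --- is not how the estimate goes: the remainder from replacing the argument of $A$ by a piecewise-constant approximation is controlled by $\|A''\|_\infty$ together with Lemma~\ref{constant approx multivariate}, not by the correlation bounds; the correlation input to (B2) is Lemma~\ref{splitting lemma}. Second, with the corrected $K$ your accounting of the first summand of $\tilde C$ should be redone as in the proof of Theorem~\ref{multivariate circle theorem} (using $K+1\le 4\log N/(1-\vartheta)$ and $\log N\ge 1$ to absorb the $\vartheta^{K+1}/(1-\vartheta)$ term), which is where the factor $60=12\cdot(4+1)$ comes from.
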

In particular, if $\sigma_{N} > C_{0}$ (case $p=0$) for $N\ge 3$, the upper bound becomes
\beqn
 \tilde{C}\max\{1,C_{0}^{-2}\}N^{-\frac{1}{2}} \log N .
\eeqn

If the variance $\sigma_{N}$ decreases fast towards zero, then Theorem \ref{circle main thm} is not useful. However, $d_\mathscr{W}(W,\sigma_{N}Z)\le 2\sigma_{N}$ as is proven in Section \ref{computing rho}. Since this second estimate is stronger, when $\sigma_{N}$ is smaller, we are able to provide the following CLT result which is independent of variance.
\begin{cor}\label{N^1/6 convergence cor}  
Let $(T_{i})_{i=1}^{\infty}$, $T_{i}:\bS^{1}\rightarrow \bS^{1}, i\in \bN$ be a sequence of transformations in the model $\cM$. Then
\beqn
d_{\mathscr{W}}(W,\sigma_{N}Z)\leq \max\{\tilde{C},2\}N^{-\frac{1}{6}}\log N,
\eeqn
for all $N\ge 16/(1-\vartheta)^{2} $, where $\tilde{C}$ is as in \eqref{eq:Ctilde}.
\end{cor}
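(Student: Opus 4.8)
The plan is to obtain the variance-independent bound by combining the two estimates for $d_{\mathscr{W}}(W,\sigma_{N}Z)$ already available — Theorem~\ref{circle main thm}, whose strength degrades as $\sigma_N$ shrinks, and the elementary bound $d_{\mathscr{W}}(W,\sigma_{N}Z)\le 2\sigma_N$ proved in Section~\ref{computing rho}, whose strength improves as $\sigma_N$ shrinks — by a case split on the size of $\sigma_N$, with the threshold chosen so that the two resulting powers of $N$ coincide. Fix $N\ge 16/(1-\vartheta)^2$ throughout; in particular $N\ge 16$, so $\log N>1$.

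First I would treat the case $\sigma_N\le N^{-1/6}$. Here the estimate $d_{\mathscr{W}}(W,\sigma_{N}Z)\le 2\sigma_N$ from Section~\ref{computing rho} gives at once
\[
d_{\mathscr{W}}(W,\sigma_{N}Z)\le 2N^{-1/6}\le 2N^{-1/6}\log N,
\]
using $\log N>1$. (When $\sigma_N=0$ this is anyway trivial.) Next, in the complementary case $\sigma_N> N^{-1/6}$, one has $\sigma_N\ge C_0 N^{-p}$ with $C_0=1$ and $p=\tfrac16$, so Theorem~\ref{circle main thm} applies and yields
\[
d_{\mathscr{W}}(W,\sigma_{N}Z)\le \tilde C\max\{1,C_0^{-2}\}\,N^{-\frac12+2p}\log N=\tilde C\,N^{-1/6}\log N,
\]
since $\max\{1,C_0^{-2}\}=1$ and $-\tfrac12+\tfrac13=-\tfrac16$. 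Taking the larger of the two bounds gives $d_{\mathscr{W}}(W,\sigma_{N}Z)\le\max\{\tilde C,2\}\,N^{-1/6}\log N$, as claimed.

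The only genuine ``choice'' in the argument — and the only place any thought is required — is pinning the threshold exponent: with threshold $\sigma_N\le N^{-a}$ the first case produces a bound $O(N^{-a})$ while the second, via Theorem~\ref{circle main thm} with $p=a$, produces $O(N^{-1/2+2a})$, and equating $-a=-\tfrac12+2a$ forces $a=\tfrac16$. Beyond that, the proof is pure bookkeeping; the one point worth double-checking is that the standing hypothesis $N\ge 16/(1-\vartheta)^2$ already forces $\log N\ge 1$, so that the factor $\log N$ can be harmlessly inserted in the first case and the two bounds can be merged into a single clean expression.
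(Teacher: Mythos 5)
Your proposal is correct and follows essentially the same route as the paper: a case split at the threshold $\sigma_N = N^{-1/6}$, using the elementary estimate $d_{\mathscr{W}}(W,\sigma_N Z)\le 2\sigma_N$ from \eqref{upper bound by variance} when the variance is small and Theorem~\ref{circle main thm} with $C_0=1$, $p=\tfrac16$ when it is large, with the exponent pinned down by solving $-\tfrac12+2p=-p$. The verification that $N\ge 16/(1-\vartheta)^2$ forces $\log N\ge 1$ matches the paper's own justification for inserting the $\log N$ factor in the small-variance case.
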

Finally, in the last result of this subsection we consider the self-normalized version of $W$.

For this purpose
define 
\beq
S_{N}=\sum_{i=0}^{N-1}\bar{f}^{i}=\sqrt{N}W(N)=\sqrt{N}W\label{eq: S_N}
\eeq 
and 
\beqn
s_{N}^{2}=\operatorname{Var}(S_{N})=\operatorname{Var}(\sqrt{N}W)=N\sigma_{N}^{2},
\eeqn
i.e., $S_{N}$ is the Birkhoff sum with variance $s_{N}^{2}$. Notice that if $s_{N}>0$, then $S_{N}/s_{N}$ has a variance $1$ and it is thus $W$ after self-normalization. With these definitions, we have the following corollary to Theorem \ref{circle main thm}:

 \begin{cor}\label{Self-normalized cor}
Let $(T_{i})_{i=1}
^{\infty}\subset \cM$ be a sequence of transformations 
in the model $\cM$. Let $N\ge 16/(1-\vartheta)^{2}  $ and $s_{N}^{2}\ge C_{0}N^{p}$, where 
$C_{0}>0$ and $0\le p\le 1$. Then
 \beqn
 d_{\mathscr{W}}\left(\frac{S_{N}}{s_{N}},Z\right)= \tilde{C}\max\{C_{0}^{-\frac12},C_{0}^{-\frac{3}{2}}\}N^{1-\frac{3p}{2}}\log N.
 \eeqn
 \end{cor}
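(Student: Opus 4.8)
The plan is to treat this as a genuine corollary of Theorem~\ref{circle main thm}, exploiting the fact that self-normalization eliminates the scale. Since $S_{N}=\sqrt N\,W$ and $s_{N}^{2}=N\sigma_{N}^{2}$, the hypothesis $s_{N}^{2}\ge C_{0}N^{p}>0$ forces $s_{N}>0$, and then $S_{N}/s_{N}=\sqrt N\,W/(\sqrt N\,\sigma_{N})=W/\sigma_{N}$. Hence it suffices to bound $d_{\mathscr{W}}(W/\sigma_{N},Z)$.

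The first step is the elementary scaling estimate for the Wasserstein distance: if $g$ is $1$-Lipschitz, then $w\mapsto\sigma_{N}\,g(w/\sigma_{N})$ is again $1$-Lipschitz, so
\[
|\mu(g(W/\sigma_{N}))-\mu(g(Z))|=\sigma_{N}^{-1}\bigl|\mu(\sigma_{N}g(W/\sigma_{N}))-\mu(\sigma_{N}g(\sigma_{N}Z))\bigr|\le\sigma_{N}^{-1}d_{\mathscr{W}}(W,\sigma_{N}Z).
\]
Taking the supremum over $g\in\mathscr{W}$ yields $d_{\mathscr{W}}(S_{N}/s_{N},Z)\le\sigma_{N}^{-1}d_{\mathscr{W}}(W,\sigma_{N}Z)$. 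The second step is to recast the hypothesis in the form required by Theorem~\ref{circle main thm}: from $N\sigma_{N}^{2}=s_{N}^{2}\ge C_{0}N^{p}$ we get $\sigma_{N}\ge C_{0}^{1/2}N^{-(1-p)/2}$, i.e.\ $\sigma_{N}\ge C_{0}'N^{-p'}$ with $C_{0}'=C_{0}^{1/2}>0$ and $p'=(1-p)/2$; note $0\le p'\le \tfrac12$ precisely because $0\le p\le 1$. Since also $N\ge 16/(1-\vartheta)^{2}$, Theorem~\ref{circle main thm} applies with these parameters and gives
\[
d_{\mathscr{W}}(W,\sigma_{N}Z)\le\tilde{C}\max\{1,(C_{0}')^{-2}\}N^{-\frac12+2p'}\log N=\tilde{C}\max\{1,C_{0}^{-1}\}N^{\frac12-p}\log N,
\]
with $\tilde{C}$ exactly as in \eqref{eq:Ctilde}, independent of $N$.

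The last step is bookkeeping. Using $\sigma_{N}^{-1}\le C_{0}^{-1/2}N^{(1-p)/2}$ and multiplying the two displays, the power of $N$ is $N^{(1-p)/2+1/2-p}=N^{1-3p/2}$, and the constant is $C_{0}^{-1/2}\max\{1,C_{0}^{-1}\}=\max\{C_{0}^{-1/2},C_{0}^{-3/2}\}$, which produces the claimed bound $\tilde{C}\max\{C_{0}^{-1/2},C_{0}^{-3/2}\}N^{1-\frac{3p}{2}}\log N$ (the displayed ``$=$'' in the statement should of course be read as ``$\le$'').

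There is no deep obstacle here; the result is a corollary in the literal sense. The two points that need care are (i) getting the direction of the Wasserstein scaling inequality right — it is $\sigma_{N}^{-1}$, not $\sigma_{N}$, in front of $d_{\mathscr{W}}(W,\sigma_{N}Z)$ — and (ii) the substitution $(C_{0},p)\mapsto(C_{0}^{1/2},(1-p)/2)$ into Theorem~\ref{circle main thm} together with the extra factor $\sigma_{N}^{-1}\le C_{0}^{-1/2}N^{(1-p)/2}$, since a sign slip there would change the final exponent; one should also record that $p'\ge 0$ uses exactly the hypothesis $p\le 1$.
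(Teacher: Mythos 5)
Your proposal is correct and follows essentially the same route as the paper: rewrite $S_{N}/s_{N}=W/\sigma_{N}$, use the scaling identity $d_{\mathscr{W}}(W/\sigma_{N},Z)=\sigma_{N}^{-1}d_{\mathscr{W}}(W,\sigma_{N}Z)$, and apply Theorem~\ref{circle main thm} with the substitution $(C_{0},p)\mapsto(\sqrt{C_{0}},(1-p)/2)$; your exponent and constant bookkeeping match the paper's exactly. Your explicit verification of the Lipschitz scaling step and your remark that the stated ``$=$'' should be ``$\le$'' are both accurate observations, not deviations.
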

We make two final remarks. 
First, if the growth of $s_{N}^{2}$ is linear ($p=1$), then the upper bound of Wasserstein distance is of the form $CN^{-1/2}\log N$. Second, if $p>2/3$, then $d_{\mathscr{W}}\left(S_{N}/s_{N},Z\right)\rightarrow 0$, when $N\rightarrow \infty$.   

\subsection{Random dynamical system}
In this subsection we study a model, where each transformation is in a set $\Omega_0$ and a sequence $(T_{\omega_{i}})_{i=1}^{\infty}$ of transformations on $\bS^1$ is drawn randomly from a probability space~$(\Omega,\cF,\bP) = (\Omega_0^{\bZ_+},\cE^{\bZ_+},\bP)$. Here $(\Omega_0,\cE)$ is a measurable space and $\bZ_+ = \{1,2,\dots\}$. An initial probability measure $\mu$ is assumed to satisfy the conditions mentioned in the beginning of this section. We assume the following about the random dynamical system in question:

\medskip

\noindent{\bf Assumption (RDS)}
\\
i) Each $T_{\omega_{i}}\in \Omega_{0}$ is an expanding circle map satisfying the bounds in \eqref{eq:deriv bounds}.
\\
ii) The law $\bP$ is stationary, i.e., the shift $\tau:\Omega\to\Omega:(\tau(\omega))_i = \omega_{i+1}$ preserves $\bP$.
\\
iii)  The random selection process is strong mixing, with $\alpha(n)\le Cn^{-\gamma}$, $\gamma>0$, i.e., 
\beqn
\sup_{i\ge 1}\alpha(\cF_1^i,\cF_{i+n}^\infty) \le Cn^{-\gamma}
\eeqn
 for each $n\ge 1$, where $\cF_1^i$ is a sigma-algebra generated by projections $\pi_1,...,\pi_i$, $\pi_k(\omega)=\omega_k$ and $\cF_{i+n}^\infty$ generated by $\pi_{i+n},\pi_{i+n+1}...$; and
\beqn
\alpha(\cF_1^i,\cF_j^\infty) = \sup_{A\in \cF_1^i,\, B\in \cF_j^\infty}|\bP(AB) - \bP(A)\,\bP(B)|.
\eeqn
iv) The map
\beqn
(\omega,x)\mapsto T_{\omega_n}\circ\dots\circ T_{\omega_1}(x)
\eeqn
is measurable from $\cF\otimes\cB$ to $\cB$ for every $n\in\bN = \{0,1,\dots\}$ with the convention $\varphi(0,\omega,x) = x$.

\medskip

 Define $\sigma_{N}^{2}(\omega)=\sigma_{N}^{2}=
\Var_\mu  W(N)$ and $\sigma^{2}=\lim_{N\to\infty}\bE\sigma_{N}^{2}$, when the limit exists. Here $W$ is defined as in the abstract setting except that it now also has $\omega$-dependence. The next theorem gives a quenched convergence result for $W$ that holds for almost every sequence of transformations. 
 \begin{thm} Assume that (RDS) is satisfied. Then $\sigma>0$ if and only if 
 \beqn
 \sup_{N\ge 1} N\,\bE \mu(W^{2})=\infty.
\eeqn  
  Furthermore if $\sigma> 0$ holds, then for almost every $\omega$
 \beqn
d_\mathscr{W}(W(N),\sigma Z) = 
\begin{cases}
O(N^{-\frac 12} \log^{\frac32+\delta}N), &  \gamma> 1,
 \\
  O(N^{-\frac12+\delta}), & \gamma = 1,
  \\
  O(N^{-\frac{\gamma}{2}} \log^{\frac32+\delta}N), &  0<\gamma<1,
\end{cases}
 \eeqn
where $\sigma^{2}= \sum_{k=0}^\infty (2-\delta_{k0})\lim_{i\to\infty}  \bE [\mu(f_i f_{i+k}) - \mu(f_i)\mu(f_{i+k})]$. 
 \end{thm}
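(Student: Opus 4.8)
The plan is to split the statement into two parts: (I) identifying the limiting variance $\sigma^2$ together with the dichotomy, and (II) deriving the quenched rate, which I will do by freezing $\omega$, applying Theorem~\ref{circle main thm} to the resulting deterministic sequence $(T_{\omega_i})_{i\ge1}\subset\cM$, and separately controlling $|\sigma_N(\omega)-\sigma|$ by a concentration argument whose rate is governed by the mixing exponent~$\gamma$.

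For part (I), set $Y_i^{(k)}(\omega)=\mu(\bar f^i\bar f^{i+k})$; the expanding-maps estimates underlying the model of Section~\ref{circle maps} (and \cite{DobbsStenlund_2016}) give the uniform bound $|Y_i^{(k)}|\le C_2\vartheta^k$. The key structural input is the exponential loss of memory of the sequential system: the transfer operators of maps in $\cM$ contract a common cone at a uniform rate, so the density of $(\cT_i)_*\mu$ agrees, up to $O(\vartheta^{i/2})$, with a fixed measurable function of only $\omega_{i-m+1},\dots,\omega_i$ (with $m\asymp i$), and hence $Y_i^{(k)}$ equals, up to the same order of error, a fixed, shift-covariant function of $\omega_{i-m+1},\dots,\omega_{i+k}$. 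Together with stationarity of $\bP$ this yields that $b_k:=\lim_{i\to\infty}\bE\,Y_i^{(k)}$ exists, with $|b_k|\le C\vartheta^k$ and $\sum_{i\ge0}|\bE\,Y_i^{(k)}-b_k|\le C(k+1)\vartheta^k$. Rearranging $N\,\bE\mu(W^2)=\bE\,s_N^2(\omega)=\sum_{i,j=0}^{N-1}\bE\mu(\bar f^i\bar f^j)$ according to the lag~$k$ and inserting these bounds gives $N\,\bE\mu(W^2)=N\sigma^2+O(1)$ with $\sigma^2=\sum_{k\ge0}(2-\delta_{k0})b_k$; in particular the limit defining $\sigma$ exists and equals the stated series, and $\sup_{N}N\,\bE\mu(W^2)=\infty$ exactly when $\sigma^2>0$, which is the asserted dichotomy.

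For part (II), the same reduction writes $\sigma_N^2(\omega)=\tfrac1N\sum_k(2-\delta_{k0})\sum_i Y_i^{(k)}$ as a weighted sum of short-window functions of $(\omega_n)$, with $\vartheta^k$-decay in the lag. Feeding this into hypothesis (RDS)(iii) via the covariance inequality $|\operatorname{Cov}(U,V)|\le C\|U\|_\infty\|V\|_\infty\,\alpha(\mathrm{gap})$ yields $\Var_\omega\big(\sum_{i\le M}Y_i^{(k)}\big)\le C\vartheta^{2k}M\big(\log M+\sum_{1\le n\le M}n^{-\gamma}\big)$, hence, after summing in $k$, $\Var_\omega\big(s_N^2(\omega)\big)\lesssim N^{\,1+(1-\gamma)_+}$ up to an extra $\log N$ when $\gamma=1$; the same bound holds for a maximal version (of M\'oricz type) over $M\le N$ at the cost of a factor $(\log N)^2$. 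Markov's inequality along the dyadic sequence $N_j=2^j$, a Borel--Cantelli step (which forces the threshold to carry $(\log N)^{3/2+\delta}$), and the maximal inequality to fill the dyadic gaps then give, for $\bP$-a.e.\ $\omega$,
\[
|\sigma_N^2(\omega)-\bE\sigma_N^2(\omega)|=O\!\big(N^{-\min\{1,\gamma\}/2}(\log N)^{3/2+\delta}\big),
\]
the borderline case $\gamma=1$ being absorbed into $O(N^{-1/2+\delta})$; combined with $\bE\sigma_N^2(\omega)=\sigma^2+O(1/N)$ from part (I) this gives $\sigma_N(\omega)\to\sigma$ a.s.\ at the same rate, in particular $\sigma_N(\omega)\ge\sigma/2$ for all large~$N$.

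To conclude, fix $\omega$ in the full-measure set where part (II) holds. For $N$ large, $(T_{\omega_i})_i\subset\cM$ and $\sigma_N(\omega)\ge\sigma/2$, so Theorem~\ref{circle main thm} with $p=0$ and $C_0=\sigma/2$ gives $d_\mathscr{W}(W(N),\sigma_N(\omega)Z)\le\tilde C\max\{1,4\sigma^{-2}\}\,N^{-1/2}\log N$; since $d_\mathscr{W}(aZ,bZ)=\sqrt{2/\pi}\,|a-b|$ for $a,b\ge0$ and $|\sigma_N(\omega)-\sigma|\le\sigma^{-1}|\sigma_N^2(\omega)-\sigma^2|$, the triangle inequality together with part~(II) produces exactly the three claimed regimes, the finitely many small~$N$ being absorbed into the constants. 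I expect part~(II) to be the main obstacle: turning the polynomial mixing of the driving process into an almost-sure, rate-quantified law of large numbers for the empirical variances requires the loss-of-memory reduction of the a priori non-stationary, long-range-dependent quantities $\mu(\bar f^i\bar f^{i+k})$ to short-window functions, a maximal moment inequality for bounded $\alpha$-mixing sequences, and a Borel--Cantelli scheme delicate enough to reproduce the logarithmic loss and the threefold split; by contrast, the deterministic ingredient is already supplied by Theorem~\ref{circle main thm}, and part~(I) is a routine Ces\`aro computation once the loss-of-memory estimate is available.
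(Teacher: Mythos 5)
Your architecture coincides with the paper's: the paper also proves this theorem by (a) establishing a quenched, rate-quantified convergence $|\sigma_N^2(\omega)-\sigma^2|$ governed by the mixing exponent $\gamma$, and then (b) combining Theorem~\ref{circle main thm} (applied pointwise in $\omega$, with $\sigma_N(\omega)$ bounded below for large $N$) with Lemma~\ref{normal distributions distance} and the triangle inequality; the dichotomy for $\sigma>0$ likewise comes from showing $\bE\, s_N^2 = N\sigma^2+O(1)$. So in structure this is essentially the same proof, and your part (I) and the concluding paragraph match the paper's outline step for step.

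The one genuine difference is where the work in part (II) lives. The paper does not prove the variance concentration in this manuscript at all: it verifies hypotheses (SA1)--(SA5) of a companion reference (via Lemma~\ref{rho lem multivariate} and Lemma~\ref{pw lemma 1}) and then quotes that reference's Theorem 4.1 for the bound on $|\sigma_N^2(\omega)-\sigma^2|$, including the three regimes and the $\log^{3/2+\delta}N$ loss. You instead sketch that ingredient directly: loss-of-memory reduction of $\mu(\bar f^i\bar f^{i+k})$ to short-window functionals of $\omega$, the $\alpha$-mixing covariance inequality, a M\'oricz-type maximal inequality, and Borel--Cantelli along dyadic times. That is the right mechanism and reproduces the correct exponents and logarithms, but it is exactly the part that remains at sketch level, and it is the part the paper itself declines to write out. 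Two small points to tighten if you were to complete it: the block length in the loss-of-memory step should be something like $m\asymp i/2$ or $m\asymp\log N$ (your phrase ``$m\asymp i$'' would leave no gap between the windows of $Y_i^{(k)}$ and $Y_j^{(k)}$ and the covariance inequality would give nothing); and you should record that the window approximation errors $O(\vartheta^{m})$ must be summable both in the variance bound and in the identification of $b_k$. Everything else (the inequality $|\sigma_N-\sigma|\le\sigma^{-1}|\sigma_N^2-\sigma^2|$, the use of $C_0=\sigma/2$, $p=0$ in Theorem~\ref{circle main thm}, the absorption of the boundary case $\gamma=1$ into $N^{-1/2+\delta}$) is consistent with the paper.
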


The proof is omitted, but we give the following short outline of it: First of all, Assumption (RDS) together with Lemmas \ref{rho lem multivariate} and \ref{pw lemma 1} are applied to show that
Assumptions (SA1)--(SA4) in [Quenched Stein] are satisfied. The condition for $\sigma>0$ is shown by verifying that Assumption (SA5) in [Quenched Stein] holds for the given system and then using Lemma C.2 (iv)(b) $\&$ (v)(b) in that paper.  

Theorem 4.1 in the same paper is then applied, giving the limit variance and bounds for $|\sigma_{n}^{2}(\omega)-\sigma^{2}|$. Lemma \ref{normal distributions distance} and Theorem \ref{circle main thm} are applied to yield the latter part of the above theorem.

\section{Application II: A quasistatic dynamical system}\label{quasistatic model}
The model that we present in this section is introduced in \cite{DobbsStenlund_2016}. First
we present the following definition from \cite{DobbsStenlund_2016}:
\begin{defn}
Let $X$ be a set and $\cM$ a collection of self-maps $T\colon X \rightarrow X$ equipped with a topology. Consider a triangular array
\beqn
\textbf{T} = \{T_{n,k} \in \cM \colon 0\le k \le n, n\ge 1 \}
\eeqn
of elements of $\cM$. If there exists a piecewise continuous curve $\gamma \colon [0,1]\rightarrow\cM$ such that
\beqn
\lim_{n\rightarrow \infty} T_{n,\lfloor nt\rfloor} = \gamma_{t},\qquad t\in [0,1],
\eeqn
we say that $(\textbf{T},\gamma)$ is a \emph{quasistatic dynamical system (QDS)}. The set $X$ is called the \emph{state phase} and $\cM$ the \emph{system phase} of the QDS.
\end{defn}
\subsection{The model}
We define a following QDS, also introduced in \cite{DobbsStenlund_2016}.
The state space is $\bS^{1}$ and the system space $\cM$ is the same set of transformations on $\bS^{1}$ as in the model of Section \ref{circle maps}.  We define a metric $d_{C^{1}}$ to the set $\cM$ by
\beqn
d_{C^{1}}(T_{1},T_{2})= \sup_{x\in \bS^{1}}d(T_{1}x,T_{2}x)+ \Vert T_{1}'-T_{2}'\Vert_{\infty}
\eeqn
for $T_{1},T_{2}\in \cM$. Here $d$ is the natural metric on $\bS^{1}=\bR/ \bZ$.
We assume that $\gamma:\,[0,1]\rightarrow \cM$ is a Hölder continuous curve with exponent $\eta\in \,]0,1[$ and constant $C_{H}\ge 0$. Let $\textbf{T}$ be a triangular array of maps
\beqn
\textbf{T}=\{T_{n,k}\in \cM: 0\le k\le n,n\ge 1\},
\eeqn
which satisfies
\beqn
 \sup_{0\le t\le 1} d_{C^{1}}(T_{n,\lfloor nt\rfloor},\gamma_{t})\le C_{H}n^{-\eta}.
\eeqn
It is known that for every $T\in \cM$ there exists a unique invariant probability measure 
that is equivalent to the Lebesgue measure $m$ on $\bS^{1}$. For $\gamma_{t}$ we denote this 
measure by~$\hat{\mu}_{t}$. Furthermore we write 
$\hat{f}_{t}=f-\hat{\mu}_{t}(f)$.

If $f$ is univariate we define
\beqn
\hat{\sigma}^{2}_{t}(f)= \lim_{m\rightarrow\infty}\hat{\mu}_{t}\left[ \left(\dfrac{1}{\sqrt{m}}\sum_{k=0}^{m-1}\hat{f}_{t}\circ \gamma_{t}^{k}  \right)^{2}\right]
\eeqn
and
\beqn
\sigma^{2}_{t}(f)=\int_{0}^{t}\hat{\sigma}^{2}_{s}(f)ds.
\eeqn
We may write $\sigma^{2}_{t}$ instead of $\sigma^{2}_{t}(f)$ if $f$ is known from the context.

Analogously if $f$ is multivariate we define
\beqn
\hat{\Sigma}_{t}(f)= \lim_{m\rightarrow\infty}\hat{\mu}_{t}\left[ \left(\dfrac{1}{\sqrt{m}}\sum_{k=0}^{m-1}\hat{f}_{t}\circ \gamma_{t}^{k} \right)\otimes \left( \dfrac{1}{\sqrt{m}} \sum_{k=0}^{m-1}\hat{f}_{t}\circ \gamma_{t}^{k}     \right)\right]
\eeqn
and
\beqn
\Sigma_{t}(f)=\int_{0}^{t}\hat{\Sigma}_{s}(f)ds.
\eeqn

Let us introduce some notations. We denote $\cT_{n,i}=T_{n,i}\circ T_{n,i-1}
\circ ... \circ T_{n,1}$ and $\cT_{n,i,j}=T_{n,i}\circ T_{n,i-1}\circ ... \circ T_{n,j}$. 
Furthermore we denote $f_{n,i}= f\circ \cT_{n,i}$ and $\bar{f}_{n,i}= f_{n,i}-
\mu(f_{n,i})$, and define
\beq
\xi_{n}(t)= \xi(t)=\frac{1}{\sqrt{n}}\sum_{i=0}^{\lfloor nt\rfloor-1 } \bar{f}_{n,i}+ 
\dfrac{\{nt\}}{\sqrt{n}}\bar{f}_{n,\lfloor nt\rfloor},\label{xi def}
\eeq
where $\{nt\}=nt-\lfloor nt \rfloor$. Note that
$
\xi_{n}(t)=n^{\frac12}\int_{0}^{t} \bar{f}_{n,\lfloor ns\rfloor} ds.
$

We denote the covariance matrix of $\xi_{n}(t)$ (with respect to $\mu$) by $\Sigma_{n,t}$. If $f$ is univariate, then the variance of $\xi_{n}(t)$ is denoted by $\sigma^{2}_{n,t}$. 
  We aim to prove an upper bound on $|\sigma^{2}_{n,t}-\sigma^{2}_{t}|$ as a function of $n$. This in turn is used to prove an upper bound to Wasserstein distance between  $\xi_{n}(t)$ and $\sigma_{t}Z$.
\subsection{Results}
The next theorem concerns approximating the distribution of $\xi_{n}(t)$ by the multivariate normal distribution $\cN(0,\Sigma_{t})$. By the definition of $\xi_{n}(t)$ 
and Theorem \ref{multivariate circle theorem} it is not surprising that for large $nt$ the distribution of $\xi_{n}(t)$ is 
close to $\cN(0,\Sigma_{n,t})$. Thus the essential new content of this theorem is that $\Sigma_{n,t}\approx \Sigma_{t}$ for large $n$. We also see that the more regular the curve $\gamma$ is, the better is the speed of convergence. 
\begin{thm}\label{multivariate quasitheorem}
Let $t_{0}\in \,]0,1]$ be such that $\hat{\Sigma}_{t_{0}}$ is positive definite and let \(h: \, \bR^d \to \bR\) be three times differentiable with \(\Vert D^k h \Vert_{\infty} < \infty\) for \(1 \le k \le 3\). Then for all $\eta'< 
\eta$ there exists constant $C$ independent of $t$ such that for every $t\ge t_{0}$ and $n\ge 1$
\beqn
\left|\mu(h(\xi_{n}(t)))-\Phi_{\Sigma_{t}}(h)\right|
 \le Cn^{-\eta'}+Cn^{-\frac{1}{2}}\log n.
\eeqn
\end{thm}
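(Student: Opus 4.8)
The plan is to compare $\xi_n(t)$ first with the Gaussian carrying its own covariance and then to replace that covariance by the limit $\Sigma_t$; writing $N=\lfloor nt\rfloor$, the triangle inequality
\[
\bigl|\mu(h(\xi_n(t)))-\Phi_{\Sigma_t}(h)\bigr|\le\bigl|\mu(h(\xi_n(t)))-\Phi_{\Sigma_{n,t}}(h)\bigr|+\bigl|\Phi_{\Sigma_{n,t}}(h)-\Phi_{\Sigma_t}(h)\bigr|
\]
splits the task into a CLT estimate and a covariance-convergence estimate. Two reductions make the constant uniform in $t\ge t_0$. First, each $\hat\Sigma_s$ is positive semidefinite and, by Hölder continuity of $s\mapsto\gamma_s$ together with continuous dependence of the limiting covariance on the map, $\hat\Sigma_s$ is positive definite for $s$ near $t_0$; hence $\Sigma_t=\int_0^t\hat\Sigma_s\,ds\ge\Sigma_{t_0}>0$ for every $t\ge t_0$, which is the only feature of $\Sigma_t$ the Gaussian comparisons will use, and (from the convergences below) $\Sigma_{n,t},\Sigma_N$ are positive definite for $n$ large. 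Second, it suffices to treat $n$ above a threshold $n_0=n_0(t_0,\lambda,A_*)$, since for the finitely many smaller $n$ both $|h(\xi_n(t))-h(0)|\le\|Dh\|_\infty|\xi_n(t)|\le C\sqrt n\,\|f\|_\infty\|Dh\|_\infty$ and $|\Phi_{\Sigma_t}(h)-h(0)|\le\tfrac d2\|D^2h\|_\infty\operatorname{tr}\Sigma_t$ are bounded, so the claim holds after enlarging $C$.

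For the CLT estimate I would reduce to Theorem~\ref{multivariate circle theorem}. Since $\xi_n(t)=\sqrt{N/n}\,W_N+\tfrac{\{nt\}}{\sqrt n}\bar f_{n,N}$ with $W_N=\tfrac1{\sqrt N}\sum_{i=0}^{N-1}\bar f_{n,i}$, we have $\|\xi_n(t)-\sqrt{N/n}\,W_N\|_\infty\le 2\|f\|_\infty n^{-1/2}$, hence $|\mu(h(\xi_n(t)))-\mu(h(\sqrt{N/n}\,W_N))|\le 2\|f\|_\infty\|Dh\|_\infty n^{-1/2}$. Applying Theorem~\ref{multivariate circle theorem} to $(T_{n,1},\dots,T_{n,N})$ (extended arbitrarily in $\cM$) and the rescaled function $\tilde h(w)=h(\sqrt{N/n}\,w)$, for which $\|D^k\tilde h\|_\infty=(N/n)^{k/2}\|D^kh\|_\infty\le\|D^kh\|_\infty$, gives $|\mu(\tilde h(W_N))-\Phi_{\Sigma_N}(\tilde h)|\le CN^{-1/2}\log N$; the hypotheses $N\ge 16/(1-\vartheta)^2$ and $\Sigma_N$ positive definite hold for $n\ge n_0$ because $\Sigma_N=\tfrac nN\Sigma_{n,t}+O(1/n)\to\tfrac1t\Sigma_t\ge\Sigma_{t_0}>0$, and $C$ is monotone in the derivative norms, hence uniform. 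Combining this with the scaling identity $\Phi_{\Sigma_N}(\tilde h)=\Phi_{(N/n)\Sigma_N}(h)$, the estimate $|(N/n)\Sigma_N-\Sigma_{n,t}|\le C/n$ (the dropped cross- and self-terms involve correlations decaying like $\vartheta^{|N-i|}$, summing to $O(1/n)$ after the $1/n$ normalization), and the Gaussian interpolation estimate (cf.\ Lemma~\ref{normal distributions distance}) to pass from $\Phi_{(N/n)\Sigma_N}(h)$ to $\Phi_{\Sigma_{n,t}}(h)$, one obtains $|\mu(h(\xi_n(t)))-\Phi_{\Sigma_{n,t}}(h)|\le Cn^{-1/2}\log n$. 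The decorrelation data $C_2,C_4,\rho(i)=\vartheta^i$ and the coboundary-type input needed here are exactly those supplied by Lemmas~\ref{rho lem multivariate} and~\ref{pw lemma 1} in the proof of Theorem~\ref{multivariate circle theorem} (whose engine is Theorem~\ref{thm:main}).

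For the covariance estimate, the Gaussian interpolation estimate along $s\mapsto s\Sigma_{n,t}+(1-s)\Sigma_t$ (positive definite throughout for $n\ge n_0$) gives $|\Phi_{\Sigma_{n,t}}(h)-\Phi_{\Sigma_t}(h)|\le C\|D^2h\|_\infty|\Sigma_{n,t}-\Sigma_t|$, so it remains to show $|\Sigma_{n,t}-\Sigma_t|\le Cn^{-\eta'}$. Writing $\Sigma_{n,t}=\tfrac1n\sum_{0\le i,j<N}\mu(\bar f_{n,i}\otimes\bar f_{n,j})+O(1/n)$ and $\Sigma_t=\int_0^t\sum_{k\in\bZ}\hat\mu_s(\hat f_s\otimes(\hat f_s\circ\gamma_s^{|k|}))\,ds$, I would: (i) truncate the sum over $|i-j|$ and the series over $k$ at a level $m$, incurring error $O(\vartheta^m)$ by exponential decay of correlations; (ii) for $|i-j|\le m$, replace the non-autonomous correlation $\mu(\bar f_{n,i}\otimes\bar f_{n,j})$ along $\cT_{n,i}$ by the stationary one $\hat\mu_{i/n}(\hat f_{i/n}\otimes(\hat f_{i/n}\circ\gamma_{i/n}^{|i-j|}))$, using that $T_{n,k}$ lies within $Cn^{-\eta}$ of $\gamma_{k/n}$ in $d_{C^1}$ and $\gamma_{k/n}$ within $C(m/n)^\eta$ of $\gamma_{i/n}$ for $|k-i|\le m$, together with Hölder dependence of transfer operators, invariant densities and decorrelation constants on the map (Section~5 of~\cite{DobbsStenlund_2016}), giving a per-pair error $O(n^{-\eta}+(m/n)^\eta)$ and hence total error $O(m^{1+\eta}n^{-\eta})$ over the $O(nm)$ pairs; (iii) recognize $\tfrac1n\sum_i\sum_{|k|\le m}\hat\mu_{i/n}(\hat f_{i/n}\otimes(\hat f_{i/n}\circ\gamma_{i/n}^{|k|}))$ as a Riemann sum for $\int_0^t\sum_{|k|\le m}\hat\mu_s(\hat f_s\otimes(\hat f_s\circ\gamma_s^{|k|}))\,ds$ with error $O(mn^{-\eta})$, the integrand being Hölder-$\eta$ in $s$, plus an $O(1/n)$ boundary error at $s=t$. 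Thus $|\Sigma_{n,t}-\Sigma_t|\le C(m^{1+\eta}n^{-\eta}+\vartheta^m)$; taking $m=\lceil c\log n\rceil$ with $c$ large enough makes $\vartheta^m\le n^{-\eta}$ and $m^{1+\eta}n^{-\eta}\le(\log n)^{1+\eta}n^{-\eta}\le Cn^{-\eta'}$ for any $\eta'<\eta$, and adding the two estimates completes the proof.

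The main obstacle is step (ii): making the ``freezing'' of the non-autonomous composition to a stationary iteration quantitative with the correct Hölder exponent, uniformly in the block location $i$ and over a window of size $m=O(\log n)$. This forces one to import and carefully track the quantitative perturbation theory of expanding circle maps from~\cite{DobbsStenlund_2016} (continuity of invariant densities and decorrelation constants in the $d_{C^1}$ metric) through $O(nm)$ summands, and it is the sole source of the loss from $\eta$ to an arbitrary $\eta'<\eta$, via the cutoff $m\asymp\log n$. Everything else — the abstract/circle-map CLT input, the Gaussian comparison, and the Riemann-sum estimate — is routine given the results already stated.
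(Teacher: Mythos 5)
Your proposal is correct and follows the same architecture as the paper's proof: a triangle-inequality split into a CLT estimate (reduced to Theorem \ref{multivariate circle theorem} applied to the partial sum with the rescaled test function, exactly the paper's $h^{*}$), a covariance-convergence estimate $|\Sigma_{n,t}-\Sigma_{t}|\le Cn^{-\eta''}$, and a comparison of the two Gaussians, with uniformity in $t\ge t_{0}$ secured by $\Sigma_{t}\ge\Sigma_{t_{0}}>0$ and a matrix perturbation fact. Two implementation details differ. First, for $|\Phi_{\Sigma_{n,t}}(h)-\Phi_{\Sigma_{t}}(h)|$ you use Gaussian interpolation along $s\mapsto s\Sigma_{n,t}+(1-s)\Sigma_{t}$, giving a bound proportional to $\|D^{2}h\|_{\infty}|\Sigma_{n,t}-\Sigma_{t}|$; the paper instead couples via square roots, $\Phi_{\Sigma}(h)=\bE[h(\Sigma^{1/2}Z)]$, and invokes the bound $\|\Sigma_{1}^{1/2}-\Sigma_{2}^{1/2}\|\le\|\Sigma_{1}-\Sigma_{2}\|/(\sqrt{\lambda_{1}(\Sigma_{1})}+\sqrt{\lambda_{1}(\Sigma_{2})})$, which only needs $\operatorname{Lip}_{d}(h)$ but requires the eigenvalue lower bound; your variant is equally valid here since $\|D^{2}h\|_{\infty}<\infty$ is assumed. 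Second, you re-derive the covariance convergence from scratch with a $\log n$ truncation window, whereas the paper quotes Lemma \ref{covariance distance lemma} (the multivariate analogue of Lemma \ref{variance distance lemma}, which uses a window $a_{n}=n^{-1+\kappa}$ and the sets $P_{n},Q_{n},R_{n}$); both schemes lose the same $\eta\to\eta'$ and your sketch is essentially a compressed version of that lemma's proof, modulo the standard caveat (handled by the paper's set $Q_{n}$ and Lemma \ref{b lemma}) that the freezing step requires $i/n\gtrsim n^{-1}\log n$, the near-boundary terms being negligible. The only other nitpick is your choice $N=\lfloor nt\rfloor$, which makes $N/n$ possibly slightly below $t_{0}$ so the lower bound on $\lambda_{1}(\Sigma_{N/n})$ needs an extra $O(1/n)$ correction; the paper sidesteps this by rounding up to $\lceil nt\rceil/n\ge t$.
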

It is actually true that if $\hat{\Sigma}_{0}$ is positive definite, then $\hat{\Sigma}_{t_{0}}$ is positive definite with all small enough $t_{0}> 0$. However the constant $C$ depends on the choice of $t_{0}$, which explains the formulation of the previous theorem. 

If $f$ is univariate we can again use the Wasserstein distance. As in the previous theorem, regularity of $\gamma$ effects to the speed of convergence. A simple assumption that $\hat{\sigma}_{t}^{2
}$ is non-zero somewhere is also required for providing the speed of convergence given in the theorem.
\begin{thm}\label{univariate quasitheorem}
Let $t_{0}\in \,]0,1]$ be such that $\hat{\sigma}^{2}_{t_{0}}> 0$. Then for all $\eta'< 
\eta$ there exist constants $C=C(\lambda, A_*,\eta',\eta, C_{H}, \|f\|_{\operatorname{Lip}}, L_{0},t_{0},\hat{\sigma}_{t_{0}}^{2})$ such that for every $t\ge t_{0}$ and $n\ge 1$
\beqn
d_{\mathscr{W}}\left(\xi_{n}(t), \sigma_{t}Z\right) 
 \le Cn^{-\eta'}+Cn^{-\frac{1}{2}}\log n.
\eeqn
\end{thm}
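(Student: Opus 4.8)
The plan is to reduce the statement to an application of Theorem \ref{main thm 1d} (the abstract univariate result) together with a separate estimate comparing the finite-$n$ variance $\sigma_{n,t}^2$ to the limiting quantity $\sigma_t^2$. First I would fix $t\ge t_0$ and work with the random variables $f^i = f_{n,t} := f\circ\cT_{n,i}$ on $(\bS^1,\cB,\mu)$, for $0\le i\le \lfloor nt\rfloor-1$. The quantity $\xi_n(t)$ is, up to the fractional-part correction term $\tfrac{\{nt\}}{\sqrt n}\bar f_{n,\lfloor nt\rfloor}$, exactly a rescaled Birkhoff sum $W(\lfloor nt\rfloor)$ of the form handled by the abstract theorem; since $|\bar f_{n,\lfloor nt\rfloor}|\le 2\|f\|_\infty$, that correction contributes at most $2\|f\|_\infty n^{-1/2}$ to the Wasserstein distance and may be absorbed into the $Cn^{-1/2}\log n$ term. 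The main work is therefore (i) to verify the hypotheses (B1), (B2), (B3) for this triangular family with $\rho(i) = \vartheta^i$ (or a comparable geometric rate) and $\tilde\rho(K) \lesssim \vartheta^K$, drawing on the decay-of-correlations lemmas for expanding circle maps (Lemma \ref{rho lem multivariate}, Lemma \ref{pw lemma 1}, and the constant $B_0$) that were already invoked for Theorem \ref{circle main thm}; and (ii) to choose $K = K(n)$ of order $\log n$ so that the output of Theorem \ref{main thm 1d} becomes $C n^{-1/2}\log n$, exactly as in the proof of Theorem \ref{circle main thm}. This handles the approximation of $\xi_n(t)$ by $\sigma_{n,t} Z$.

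The genuinely new ingredient, compared to Section \ref{circle maps}, is controlling $|\sigma_{n,t}^2 - \sigma_t^2|$, and I expect this to be the main obstacle. The idea is to write
\[
\sigma_{n,t}^2 = \frac1n\sum_{i=0}^{\lfloor nt\rfloor-1}\sum_{j=0}^{\lfloor nt\rfloor-1}\mu(\bar f_{n,i}\bar f_{n,j}) + (\text{boundary terms from }\{nt\}),
\]
and to compare, block by block, the inner correlations $\mu(\bar f_{n,i}\bar f_{n,i+k})$ with the "frozen" correlations $\hat\mu_{s}\big(\hat f_s\,(\hat f_s\circ\gamma_s^k)\big)$ at the corresponding time $s = i/n$. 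Two approximations are in play: first, for $i$ in a window of length $\sim\log n$ the composed map $\cT_{n,i+k,i+1}$ is $C^1$-close to $\gamma_{i/n}^{\,k}$ with error $\lesssim C_H n^{-\eta}\cdot(\text{something polynomial in }k)$ by the Hölder assumption on the triangular array and on $\gamma$; second, the push-forward density $(\cT_{n,i})_*\mu$ converges to $\hat\mu_{i/n}$ at an exponential-in-$i$ rate. Combining these with the geometric decay in $k$, summing a Riemann sum over $s\in[0,t]$, and using the definition $\sigma_t^2 = \int_0^t\hat\sigma_s^2\,ds$, should yield $|\sigma_{n,t}^2 - \sigma_t^2|\le C n^{-\eta'}$ for every $\eta'<\eta$ — the loss from $\eta$ to $\eta'$ coming from the $\log n$-sized windows and the polynomial-in-$k$ factors. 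This is precisely the content I would isolate as a lemma (the univariate analogue of whatever is proved in Section \ref{Proofs of quasistatic model results}).

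Finally I would assemble the pieces with the triangle inequality for $d_\mathscr{W}$:
\[
d_\mathscr{W}\big(\xi_n(t),\sigma_t Z\big)
\le d_\mathscr{W}\big(\xi_n(t),\sigma_{n,t}Z\big) + d_\mathscr{W}\big(\sigma_{n,t}Z,\sigma_t Z\big).
\]
The first term is $\le C n^{-1/2}\log n$ by the discussion above, provided $\sigma_{n,t}$ is bounded below; but $\sigma_{n,t}^2\to\sigma_t^2\ge \int_0^{t_0}\hat\sigma_s^2\,ds$, and the positivity of $\hat\sigma_{t_0}^2$ together with continuity of $s\mapsto\hat\sigma_s^2$ forces $\sigma_t^2 > 0$ for all $t\ge t_0$, so $\sigma_{n,t}$ is bounded away from zero uniformly in $t\ge t_0$ once $n$ is large, with the lower bound depending only on the stated constants — this is where the hypothesis $\hat\sigma_{t_0}^2>0$ and the uniformity claim enter. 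The second term is $\le |\sigma_{n,t}-\sigma_t|\,\E|Z| \le |\sigma_{n,t}^2-\sigma_t^2|/(\sigma_{n,t}+\sigma_t) \le C n^{-\eta'}$ by the variance lemma, using a comparison of $\cN(0,\sigma_{n,t}^2)$ and $\cN(0,\sigma_t^2)$ of the type recorded in Lemma \ref{normal distributions distance}. Adding the two bounds gives $C n^{-\eta'} + C n^{-1/2}\log n$, and tracking the dependence of each constant through the cited lemmas shows $C$ depends only on $\lambda, A_*, \eta',\eta, C_H, \|f\|_{\operatorname{Lip}}, L_0, t_0, \hat\sigma_{t_0}^2$, as claimed. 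For small $n$ (where the lower bound on $\sigma_{n,t}$ may fail) the estimate is trivially true by enlarging $C$, since $d_\mathscr{W}\le 2(\sigma_t + \|f\|_\infty\sqrt t)$ is bounded.
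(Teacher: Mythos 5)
Your proposal is correct and follows essentially the same route as the paper: the paper also splits $d_{\mathscr{W}}(\xi_n(t),\sigma_t Z)$ by the triangle inequality (going through the intermediate points $\xi_n(\lceil nt\rceil/n)$ and $\sigma_{n,\lceil nt\rceil/n}Z$), applies the circle-map CLT of Theorem \ref{circle main thm} (itself derived from Theorem \ref{main thm 1d} with $\rho(i)=\vartheta^i$ and $K\sim\log n$) to the integer-length Birkhoff sum, isolates the estimate $|\sigma_{n,t}^2-\sigma_t^2|\le Cn^{-\eta''}$ as Lemma \ref{variance distance lemma}, obtains the uniform lower bound on the variances from the Hölder continuity of $t\mapsto\hat\sigma_t^2$ (Lemma \ref{Hölder continuity lemma}), compares the two Gaussians via Lemma \ref{normal distributions distance}, and disposes of small $n$ by enlarging $C$. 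The only cosmetic difference is that you absorb the fractional correction term directly rather than via a separate triangle-inequality step.
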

We point out that $\hat{\sigma}^{2}_{t}(f)=0$ only in the very special case that $f=g-g\circ \gamma_{t}$ for some Hölder continuous $g$. 

The last result we present in this section is analogous to Corollary \ref{N^1/6 convergence cor} in Section \ref{circle maps}. It holds without any restriction on the behaviour of the variance $\hat{\sigma}^{2}_{t}$.
\begin{thm} \label{univariate quasitheorem 2}
Let $\eta'< \eta$. Then there exists a constant $C=C(\lambda, A_*,\eta',\eta, C_{H}, \|f\|_{\operatorname{Lip}}, L_{0})$ such that the following holds for every $t\in [0,1]$ and $n\ge 1$:
\beqn
d_{\mathscr{W}}\left(\xi_{n}(t), \sigma_{t}Z\right) 
 \le Cn^{-\frac{\eta'}{2}}+Cn^{-\frac{1}{6}}\log n.
\eeqn
\end{thm}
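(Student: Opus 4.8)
The plan is to combine the circle-maps machinery with a variance-size interpolation, exactly as Corollary \ref{N^1/6 convergence cor} is deduced from Theorem \ref{circle main thm}; the genuinely quasistatic ingredient is only the variance comparison $\sigma_{n,t}^2\approx\sigma_t^2$. Concretely, everything except the final optimization is already contained in the proof of Theorem \ref{univariate quasitheorem}: the verification of hypotheses (B1)--(B3) of Theorem \ref{main thm 1d} for the sequence $(f_{n,i})_i$ (via Lemmas \ref{rho lem multivariate} and \ref{pw lemma 1}, with the geometric rate $\rho(i)=\vartheta^i$) with constants uniform in $t$, and the estimate $|\sigma_{n,t}^2-\sigma_t^2|\le Cn^{-\eta'}$, which uses no positivity of $\hat\sigma_t^2$ and is valid for all $t\in[0,1]$ and $n\ge1$ with $C=C(\lambda,A_*,\eta',\eta,C_H,\|f\|_{\operatorname{Lip}},L_0)$.

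First I would record two bounds for $d_\mathscr{W}(\xi_n(t),\sigma_{n,t}Z)$ --- the distance from $\xi_n(t)$ to the normal law with \emph{its own} variance. (i) Since $\xi_n(t)$ is centered with variance $\sigma_{n,t}^2$, one has $d_\mathscr{W}(\xi_n(t),\sigma_{n,t}Z)\le\E|\xi_n(t)|+\sigma_{n,t}\E|Z|\le2\sigma_{n,t}$. (ii) The proof of Theorem \ref{univariate quasitheorem} yields, en route (applying Theorem \ref{main thm 1d} with $K\asymp\log n$, the fractional last term and the $1/\sqrt n$ normalization of $\xi_n(t)$ being harmless since $t\le1$, and $nt$ bounded being treated separately below), the bound
\[
d_\mathscr{W}(\xi_n(t),\sigma_{n,t}Z)\le C\max\{1,\sigma_{n,t}^{-2}\}\,n^{-\frac{1}{2}}\log n
\]
with $C$ a model constant; it also holds trivially when $\sigma_{n,t}=0$, since then $\xi_n(t)=0$ almost surely.

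Next I would interpolate. With $\delta_n=(n^{-1/2}\log n)^{1/3}$ and $n$ large enough that $\delta_n\le1$: if $\sigma_{n,t}\ge\delta_n$ then (ii) gives $d_\mathscr{W}(\xi_n(t),\sigma_{n,t}Z)\le C\delta_n^{-2}n^{-1/2}\log n=Cn^{-1/6}(\log n)^{1/3}$; if $\sigma_{n,t}<\delta_n$ then (i) gives $d_\mathscr{W}(\xi_n(t),\sigma_{n,t}Z)\le2\delta_n$. In both cases $d_\mathscr{W}(\xi_n(t),\sigma_{n,t}Z)\le Cn^{-1/6}\log n$. Finally, $d_\mathscr{W}(\sigma_{n,t}Z,\sigma_t Z)=|\sigma_{n,t}-\sigma_t|\,\E|Z|$, and $|\sigma_{n,t}-\sigma_t|^2\le|\sigma_{n,t}-\sigma_t|(\sigma_{n,t}+\sigma_t)=|\sigma_{n,t}^2-\sigma_t^2|\le Cn^{-\eta'}$, so the triangle inequality gives
\[
d_\mathscr{W}(\xi_n(t),\sigma_t Z)\le Cn^{-\frac{1}{6}}\log n+Cn^{-\frac{\eta'}{2}},
\]
which is the assertion. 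The finitely many $n$ below the threshold, and all $(n,t)$ with $nt$ bounded --- for which $\xi_n(t)=O(n^{-1/2})$ and $\sigma_t^2=\int_0^t\hat\sigma_s^2\,ds=O(n^{-1})$ directly, using $\sup_s\hat\sigma_s^2<\infty$ --- are absorbed into $C$, because $n^{-\eta'/2}\ge n^{-1/2}$ stays bounded away from $0$ on any finite range of $n$.

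The interpolation itself is routine and is not where the difficulty lies. The real work is everything imported from Theorem \ref{univariate quasitheorem}: obtaining bound (ii) and the variance estimate $|\sigma_{n,t}^2-\sigma_t^2|\le Cn^{-\eta'}$ with constants \emph{uniform in $t\in[0,1]$} and with no dependence on a lower bound for $\hat\sigma_t^2$ or $\sigma_t$ --- which forces one to handle the nonstandard normalization and fractional last term of $\xi_n(t)$ with care, and to treat small $t$ (where $\sigma_t\to0$) and small $nt$ on their own. Granting that, the theorem follows as above.
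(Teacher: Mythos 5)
Your proposal is correct and follows essentially the same route as the paper: the paper's proof also runs the argument of Theorem \ref{univariate quasitheorem} with the variance--size interpolation (invoking Corollary \ref{N^1/6 convergence cor} for the renormalized partial sum $V(\lceil nt\rceil)$ in place of Theorem \ref{circle main thm}), uses the same bound $|\sigma_{n,t}-\sigma_t|\le\sqrt{|\sigma_{n,t}^2-\sigma_t^2|}$ to dispense with any lower bound on the variance, and disposes of small $nt$ and small $n$ by separate elementary cases. Your inline re-derivation of the $n^{-1/6}$ interpolation is exactly the content of Corollary \ref{N^1/6 convergence cor}, so the two arguments coincide in substance.
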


\section{Proofs for Application I}\label{computing rho}
In this section we study the model described in Section \ref{circle maps}.

\subsection{Upper bounds for $\rho$ and $\hat{\rho}$}
In this subsection we calculate upper bounds for $\rho(K)$ in Assumptions (A1) and (B1), and $\tilde{\rho}(K)$ in Assumptions (A2) and (B2) of Theorems \ref{thm:main} and \ref{main thm 1d}, respectively. 
First we introduce the following definition from \cite{DobbsStenlund_2016}:

\begin{defn}\label{L defn}
We define a class  $\cD_{L}$, $L\in \bR_{+}$, of probability densities $\psi\colon \bS\rightarrow \bR$ in the following way:
$\psi \in \cD_{L}$ if
\\
i) $\psi>0$
\\
ii) there exists $z\in \bS^{1}$ such that $\log \psi$ is Lipschitz continuous on $J_{z}=\bS\setminus \{z\}$ with constant ~$L$.
\end{defn}
Thus $L$ describes the regularity of probability densities in that class, smaller $L$ meaning more regular density. By Remark 4.4iii) in \cite{DobbsStenlund_2016} every Lipschitz continuous probability density $\psi>0$ belongs to $\cD_{L}$ with some value of $L$.

Given a transformation $T\in \cM$, the transfer operator $\cL_{T}\colon L^{1}(m)\rightarrow L^{1}(m)$ is defined by
\beq
\cL_{T}g(x)= \sum_{y\in T^{-1}\{x\}}\frac{g(y)}{T'(y)}.
\eeq
It satisfies the following rule: For every $g\in L^{1}(m)$ and $f\in L^{\infty}(m)$
\beq
\int_{\bS^{1}}f \cL_{T}g dm=\int_{\bS^{1}}g f\circ Tdm.\label{transfer op rule}
\eeq 

Furthermore, we introduce the new notation
$
\cT_{k,j}=T_{k}\circ ...\circ T_{j}.
$

 Applying \eqref{transfer op rule} repeatedly gives 
 $\cL_{\cT_{k,j}}=\cL_{T_{k}}...\cL_{T_{j}}$. We write $\cL_{k,j}=\cL_{T_{k}}...\cL_{T_{j}}$ and $\cL_{k}= \cL_{T_{k}}...\cL_{T_{1}}$.

In general transfer operators tend to smooth probability densities; see, e.g., Lemma 5.2 in \cite{DobbsStenlund_2016}. Concerning this paper, the most important content of that lemma is that there exists a constant 
$L_{*}=L_{*}(\lambda, A_{*})$ with the property that for every $L>L^{*}$ there exists $k$ such that 
\beq
\cL_{k}\cD_{L}\subset \cD_{L_{*}}.\label{L* property}
\eeq
 Actually we can choose
$
L_{*}=A_{*}\lambda(1-\lambda^{-1})^{-2},
$
as the reader may verify by going through the proofs of Lemmas 5.1 and 5.2 in that paper.

Throughout the paper $\vartheta=\vartheta(\lambda,A_{*})\in \,]0,1[$, mentioned in the next lemma, is the same constant as in Lemma 5.6 of \cite{DobbsStenlund_2016}.

The next lemma is implied by Lemma 5.10 of \cite{DobbsStenlund_2016}. Recall that 
$\varrho\in \cD_{L_{0}}$, where $L_{0}=\operatorname{Lip}(\log \varrho)$. By Remark 4.4(ii) of \cite{DobbsStenlund_2016} $L_{0}$ determines some upper bound to $\operatorname{Lip}(\varrho)$, which is why we can replace $\operatorname{Lip}(\varrho)$ by $L_{0}$ in the following lemma:
\begin{lem}\label{rho lem multivariate}
 There exist constants 
\beqn
 C_{2}=C_{2}(\lambda,A_{*},\|f\|_{\operatorname{Lip}},L_{0})>0 \qquad\text{and} \qquad C_{4}=C_{4}(\lambda, A_{*}, \|f\|_{\operatorname{Lip}}, L_{0})>0
\eeqn 
  such that by choosing $\rho(i)=\vartheta^{i}$ the system satisfies the condition \text{(A1)} of Theorem \ref{thm:main}.
and the condition \text{(B1)} of Theorem \ref{main thm 1d}.

\end{lem}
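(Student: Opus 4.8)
The plan is to derive the three estimates in (A1) --- and, as the special case $d=1$, the estimates in (B1) --- from the decay-of-correlations bounds for the composed time-dependent system established in Lemma 5.10 of \cite{DobbsStenlund_2016}, taking $\rho(i)=\vartheta^{i}$. With this choice $\rho$ is non-increasing, $\rho(0)=1$, and $\sum_{i\ge 1}i\vartheta^{i}<\infty$ because $\vartheta\in\,]0,1[$, so all that remains is to verify the three correlation inequalities.

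The first step is a transfer-operator rewriting. Fix $0\le i\le j\le k\le l\le N-1$ and coordinates $\alpha,\beta,\gamma,\delta$. Writing $\cT_{m}=\cT_{m,i+1}\circ\cT_{i}$ for $m\ge i$, using $\cL_{\cT_{i}}=\cL_{i}$, and applying the identity \eqref{transfer op rule} repeatedly, one obtains for instance
\beqn
\mu(\bar f^{i}_{\alpha}\bar f^{j}_{\beta})=\int_{\bS^{1}}(f_{\alpha}-\mu(f_{\alpha}\circ\cT_{i}))\,(f_{\beta}\circ\cT_{j,i+1}-\mu(f_{\beta}\circ\cT_{j}))\,\cL_{i}\varrho\,\rd m ,
\eeqn
and similarly the four-point correlation $\mu(\bar f^{i}_{\alpha}\bar f^{j}_{\beta}\bar f^{k}_{\gamma}\bar f^{l}_{\delta})$ becomes the integral against the density $\cL_{i}\varrho$ of the product of the four translated coordinate observables $f_{\alpha}$, $f_{\beta}\circ\cT_{j,i+1}$, $f_{\gamma}\circ\cT_{k,i+1}$, $f_{\delta}\circ\cT_{l,i+1}$ (minus their $\mu$-means). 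Since $\varrho\in\cD_{L_{0}}$ and, by the smoothing property \eqref{L* property} (Lemma 5.2 of \cite{DobbsStenlund_2016}), iterated transfer operators carry $\cD_{L_{0}}$ into a fixed class $\cD_{L_{*}}$ with $L_{*}=L_{*}(\lambda,A_{*})$ after a number of steps controlled by $L_{0}$, these are exactly the quantities bounded in Lemma 5.10 of \cite{DobbsStenlund_2016}, the reference density accounting for the $L_{0}$-dependence of the constants.

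Applying that lemma will then give: the two-point bound with decay $\vartheta^{\,j-i}$; the four-point bound with decay $\vartheta^{\,\max\{j-i,l-k\}}$, by decorrelating across whichever of the two extremal gaps $j-i$, $l-k$ is the larger; and the factorisation estimate $|\mu(\bar f^{i}_{\alpha}\bar f^{j}_{\beta}\bar f^{k}_{\gamma}\bar f^{l}_{\delta})-\mu(\bar f^{i}_{\alpha}\bar f^{j}_{\beta})\mu(\bar f^{k}_{\gamma}\bar f^{l}_{\delta})|\le C\vartheta^{\,k-j}$, by splitting $\{i,j\}$ from $\{k,l\}$ across the central gap $k-j$. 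In each case the constant is a fixed polynomial in $\|f\|_{\operatorname{Lip}}$ --- degree $2$ for the two-point inequality and degree $4$ for the four-point ones, since each composed factor enters the integrand linearly and $\|f_{\alpha}\|_{\operatorname{Lip}}\le\|f\|_{\operatorname{Lip}}$ for every coordinate --- multiplied by a constant depending only on $\lambda,A_{*},L_{0}$. Taking $C_{2}$ to be the maximum over $\alpha,\beta$ of the two-point constants and $C_{4}$ the maximum over $\alpha,\beta,\gamma,\delta\in\{1,\dots,d\}$ of the four-point constants --- finite maxima --- establishes (A1); restricting to $d=1$ yields (B1) with the same constants.

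The only genuine difficulty is internal to Lemma 5.10 of \cite{DobbsStenlund_2016}, and I would not reprove it here: the composed observables $f_{\beta}\circ\cT_{j,i+1}$, etc., have Lipschitz norms that grow geometrically in the gap, so the correlations cannot be controlled by brute force; the point is that these factors are never differentiated but kept inside the transfer-operator estimate, where the geometric growth of their Lipschitz constants is beaten by the geometric contraction rate $\vartheta$ of the composed transfer operator on the relevant class of densities. My remaining work is thus the bookkeeping above --- arranging the transfer-operator rewriting so that the gaps $j-i$, $\max\{j-i,l-k\}$ and $k-j$ appear, and checking that the resulting constants depend only on $\lambda,A_{*},\|f\|_{\operatorname{Lip}},L_{0}$.
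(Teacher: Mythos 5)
Your proposal is correct and follows essentially the same route as the paper, which simply observes that the lemma is implied by Lemma 5.10 of \cite{DobbsStenlund_2016} (with $\operatorname{Lip}(\varrho)$ controlled by $L_{0}$ via Remark 4.4(ii) there); your additional transfer-operator bookkeeping just makes that citation explicit.
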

Assume that $0\le n\le N-1$. The following two theorems determine some upper bounds on $|\mu( \bar{f}^n \cdot  \nabla h(v + W_{n} t) ) |$ and $\left|\mu\left[\bar{f}^nA(W_n)\right]\right|$ in the case of multivariate and univariate $f$, respectively. The proof of Theorem \ref{f nabla W lemma} is given after Lemmas \ref{constant approx multivariate}--\ref{splitting lemma}. Proof of Theorem \ref{fAW lemma} is omitted since it follows exactly the same steps as the proof of Theorem \ref{f nabla W lemma}.

 There is some $N$-dependence in the formulations of these theorems which will be removed later to bound $\hat{\rho}(K)$. Therefore only $K$-dependence is left in the formulation of Assumptions (A2) and (B2) for the sake of simplicity. 

\begin{thm}\label{f nabla W lemma}
Given a two times differentiable $h:\bR^{d}\to\bR$ with $\|Dh\|_{\infty},\|D^{2}h\|_{\infty}< \infty$,
\begin{align*}
|\mu( \bar{f}^n \cdot  \nabla h(v + W_{n} t) ) | \le \,&2d^{2} \|f\|_{\infty}\|D^{2}h\|_{\infty}\dfrac{\operatorname{Lip}(f) \lambda^{-\frac{K-1}{2}}}{(\lambda -1)\sqrt{N}}
\\
&+ 4d B_{0}\|f\|_{\infty}\|Dh\|_{\infty} \vartheta^{\frac{K}{2}}
+
2d\Vert Dh \Vert_{\infty}\operatorname{Lip}(f)\lambda^{-\frac{K-1}{2}} 
\end{align*}
holds for all $0\le n \le N-1$, where $B_{0}=C(\lambda,A_{*},L_{0})$ is the constant that will be introduced after Lemma \ref{pw lemma 1}.
\end{thm}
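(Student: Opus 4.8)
The plan is to estimate $\mu(\bar f^n\cdot\nabla h(v+W_n t))$ by replacing the random argument $W_n$ with a ``frozen'' deterministic one and controlling each replacement error via the decay-of-correlations machinery of \cite{DobbsStenlund_2016}. The key observation is that $W_n$ omits the block $[n]_K$, so $\bar f^n = f\circ\cT_n - \mu(f\circ\cT_n)$ and $W_n$ are separated by roughly $K$ steps of the dynamics; this is exactly the gap that the transfer-operator contraction \eqref{L* property} and Lemma 5.6 of \cite{DobbsStenlund_2016} are designed to exploit. First I would use the mean-value / Taylor device: write $\nabla h(v+W_nt) = \nabla h(v + (W_n - c)t + ct)$ for a suitable centering constant $c$, and Taylor-expand to first order in $(W_n-c)t$ about the point $v+ct$, so that
\begin{align*}
\mu(\bar f^n\cdot\nabla h(v+W_nt)) = \mu(\bar f^n)\cdot\nabla h(v+ct) + t\,\mu\big(\bar f^n\cdot D^2h(\,\cdot\,)[(W_n-c)t]\big),
\end{align*}
where the first term vanishes since $\mu(\bar f^n)=0$. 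The point $c$ should be chosen as a conditional-expectation-type approximation of $W_n$ given the ``past'' up to time $n$, so that the remainder involves a genuinely decorrelating pair. This is the structure already present in Lemmas \ref{constant approx multivariate}--\ref{splitting lemma}, which I would invoke rather than reprove.

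Next I would split $W_n$ according to whether the summation index $i$ lies before the block $[n]_K$ (indices $i \le n-K-1$) or after it (indices $i\ge n+K+1$). For the ``future'' part $\sum_{i\ge n+K+1}\bar f^i/\sqrt N$, the correlation with $\bar f^n$ is governed by pushing the density forward under $\cT_{n}$ and then applying the $\vartheta$-contraction over the remaining $\ge K$ steps; this is where the term $4dB_0\|f\|_\infty\|Dh\|_\infty\vartheta^{K/2}$ comes from, with $B_0$ the constant from Lemma \ref{pw lemma 1} bounding the cost of the initial density not yet being in the good class $\cD_{L_*}$. For the ``past'' part $\sum_{i\le n-K-1}\bar f^i/\sqrt N$, one uses that $f\circ\cT_n = (f\circ \cT_{n,n-K})\circ \cT_{n-K-1}$, so after $K$ compositions the observable $\bar f^n$ is close — in the $C^0$ sense, with error $O(\operatorname{Lip}(f)\lambda^{-K/2})$ by the uniform expansion $\inf T'\ge\lambda$ — to a function measurable with respect to coordinates that are essentially independent of the past block; the $\sqrt N$ in the denominators of the first and third terms reflects that this past part carries the $1/\sqrt N$ normalization while the number of such terms is at most $N$. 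Combining these, and using $\|W_n\|_\infty \le \sqrt N\|f\|_\infty$ to bound the crude factor multiplying the $D^2h$ term, yields the three displayed summands.

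The main obstacle I anticipate is not any single estimate but bookkeeping the interaction between the two ``halves'' of $W_n$ and the centering constant $c$ simultaneously: one must choose $c$ so that it is adapted to the dynamics up to time $n$ (making the ``past'' error small and the leading term vanish) while still leaving the ``future'' sum decorrelated from $\bar f^n$, and then show the cross terms do not reintroduce a loss. Concretely, the delicate point is controlling $\mu(\bar f^n\cdot D^2h[(W_n - c)t])$ uniformly in $v$ and $t\in[0,1]$: one cannot integrate by parts against $h$ directly (as in the pure Stein argument) because $W_n$ itself is a dynamical sum, so instead one pushes the measure forward by $\cT_{n-K-1}$ (respectively $\cT_n$), lands in $\cD_{L_*}$ up to the $B_0$ correction, and applies the decorrelation bounds of Lemma 5.10 of \cite{DobbsStenlund_2016} (i.e.\ Lemma \ref{rho lem multivariate} with $\rho(i)=\vartheta^i$) term by term. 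Once the frozen-argument reduction is set up correctly via Lemmas \ref{constant approx multivariate}--\ref{splitting lemma}, the remaining steps are routine geometric-series summation, with the $\lambda^{-(K-1)/2}/(\lambda-1)$ factor arising from $\sum_{j\ge K}\lambda^{-j/2}$ and the $\vartheta^{K/2}$ factor from the analogous sum with ratio $\vartheta^{1/2}$.
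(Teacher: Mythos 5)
Your overall strategy is the paper's: split $W_n$ into the past block ($j\le n-K-1$) and the future block ($j\ge n+K+1$), approximate the past block by local constants using the uniform expansion and $\operatorname{Lip}(f)$, decouple the future block from $\bar{f}^n$ via the transfer operator and the $\vartheta$-contraction of Lemma \ref{pw lemma 1} and Corollary \ref{pushforward cor}, and finish with the decorrelation Lemma \ref{splitting lemma} together with $\mu(\bar{f}^n)=0$. Since you explicitly invoke Lemmas \ref{constant approx multivariate}--\ref{splitting lemma}, the substance matches Steps 1--3 of the paper's proof.

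The one step that does not survive scrutiny as written is the opening Taylor identity. If $c$ is a genuine constant, the remainder $t\,\mu(\bar{f}^n\cdot D^2h(\cdot)[(W_n-c)t])$ is only $O(\|f\|_\infty\|D^2h\|_\infty)$, since $W_n-c$ has no reason to be small, so the device yields nothing decaying in $K$; if instead $c$ is the adapted, conditional-expectation-type object you describe in prose, then $\mu(\bar{f}^n\cdot\nabla h(v+ct))$ does not factor and does not vanish merely because $\mu(\bar{f}^n)=0$. The paper's resolution is that the centering is neither: on each element $I_i$ of the partition induced by $\cT_{B}$ with $B=\max\{\lfloor n-K/2\rfloor,0\}$, only the past block of $W_n$ is replaced by a constant $C_i$ (Lemma \ref{constant approx multivariate}), while the future block is retained as a function of $x$; the resulting perturbation of the argument of $\nabla h$ has sup norm $O\bigl(\operatorname{Lip}(f)\lambda^{-\lfloor K/2\rfloor}/(\sqrt{N}(\lambda-1))\bigr)$, and it is this quantity (not any bound on $\|W_n\|_\infty$) that multiplies $\|D^2h\|_\infty$ in the first term. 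The surviving expression $\sum_i\int_{I_i}\bar{f}^n_\alpha\,\partial_\alpha h(C_i+\tfrac{t}{\sqrt N}\sum_{j\ge n+K+1}f^j)\,\varrho\,dm$ is then made small by pushing forward with $\cL_n$ (cost $B_0\vartheta^{\lceil K/2\rceil}$) and applying Lemma \ref{splitting lemma} across the gap of length $K$. A second, smaller inaccuracy: the third term $2d\|Dh\|_\infty\operatorname{Lip}(f)\lambda^{-(K-1)/2}$ carries no $1/\sqrt N$ and does not come from the past block of $W_n$; it arises inside Lemma \ref{splitting lemma} from approximating the Lipschitz observable $f-\mu(f^n_\alpha)$ itself by a piecewise constant function.
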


\begin{thm}\label{fAW lemma}
Given a differentiable $A:\bR\to\bR$ with $A'$ absolutely continuous and $\max_{0\le k\le 2}\|A^{(k)}\|_\infty \le 1$,
\begin{align*}
\left|\mu\left[\bar{f}_nA(W_n)\right]\right|
&\leq 2\Vert f\Vert_{\infty} \frac{\operatorname{Lip}(f)\lambda^{-\frac{K-1}{2}}}{(\lambda-1)\sqrt{N}} 
+
4\Vert f\Vert_{\infty}B_{0}\vartheta^{\frac{K}{2}}
+
2\operatorname{Lip}(f)\lambda^{-\frac{K-1}{2}},
\end{align*}
holds for all $0\le n \le N-1$, where $B_{0}=B_{0}(\lambda,A_{*},L_{0})$.
\end{thm}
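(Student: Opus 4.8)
The plan is to run the argument used for Theorem \ref{f nabla W lemma} essentially verbatim, replacing the vector field $\nabla h$ and the bounds $\|Dh\|_\infty,\|D^2h\|_\infty$ by the scalar function $A$ and the bounds $\|A\|_\infty\le 1$, $\|A'\|_\infty\le 1$; this substitution is exactly why every dimensional factor $d$ or $d^2$ appearing in Theorem \ref{f nabla W lemma} collapses to $1$ here. Throughout one uses, as in the circle-map setting, that $\bar f^n=(f-\mu(f\circ\cT_n))\circ\cT_n$ and that, by Lemma \ref{rho lem multivariate}, the system satisfies the decay condition with $\rho(i)=\vartheta^i$.

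First I would split $W_n$ via Lemma \ref{splitting lemma} as $W_n=W_n^-+W_n^+$, where $W_n^-=N^{-\frac12}\sum_{0\le i\le n-K-1}\bar f^i$ and $W_n^+=N^{-\frac12}\sum_{n+K+1\le i\le N-1}\bar f^i$ (either sum being empty when its index range is). Since $\mu(\bar f^n)=0$, it then suffices to bound $\mu(\bar f^n A(W_n^+))$ together with the two replacement errors incurred in passing from $A(W_n)$ to $A(W_n^+)$ and from $A(W_n^+)$ to a suitable constant approximant, exploiting the two gaps of length $K$ that separate the index $n$ from the supports of $W_n^-$ and of $W_n^+$.

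For the far-future contribution I would use that the summands of $W_n^+$ and $\bar f^n$ all factor through $\cT_n$, and that $\inf T'\ge\lambda>1$ from \eqref{eq:deriv bounds} forces the Lipschitz constants of the pulled-back summands to decay like $\operatorname{Lip}(f)\lambda^{-(i-n)}$; a first-order Taylor expansion of $A$ in the $W_n^+$ variable (remainder controlled by $\|A'\|_\infty\le 1$), together with the transfer-operator regularization \eqref{L* property} and the covariance bounds of Lemma \ref{rho lem multivariate}, should produce the two terms $\frac{2\|f\|_\infty\operatorname{Lip}(f)\lambda^{-\frac{K-1}{2}}}{(\lambda-1)\sqrt N}$ and $2\operatorname{Lip}(f)\lambda^{-\frac{K-1}{2}}$. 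For the far-past contribution I would apply Lemma \ref{constant approx multivariate}, replacing $A(W_n)$ by a conditional average over the $\sigma$-algebra that freezes the dynamics up to time $n-K-1$ and bounding the error by the pointwise comparison, across the last $K+1$ maps, of the iterated density $\cL_n\varrho$ with its regularized counterpart provided by Lemma \ref{pw lemma 1} and its companion; those $K+1$ intermediate maps then contribute the factor $\vartheta^{\frac{K}{2}}$ and the constant $B_0=B_0(\lambda,A_*,L_0)$, giving (with weight $\|A\|_\infty\le 1$) the middle term $4\|f\|_\infty B_0\vartheta^{\frac{K}{2}}$.

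Adding the three contributions gives the claimed bound, uniformly in $0\le n\le N-1$. The main obstacle, exactly as in the proof of Theorem \ref{f nabla W lemma}, is not conceptual but bookkeeping: keeping every constant independent of $n$ and $N$, and checking that the scalar Taylor remainders are dominated by $\|A'\|_\infty\le 1$ in precisely the places where $\|D^2h\|_\infty$ entered the multivariate estimate. Since Lemmas \ref{constant approx multivariate}--\ref{splitting lemma} are already established for this model, no new analytic input is needed.
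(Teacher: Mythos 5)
Your top-level plan coincides with the paper's: the proof of Theorem \ref{fAW lemma} is omitted there precisely because it is Steps 1--4 of the proof of Theorem \ref{f nabla W lemma} run verbatim with $\nabla h$ replaced by $A$, $v$ by $0$ and $t$ by $1$, so that $\|D^2h\|_{\infty}$ becomes $\|A'\|_{\infty}\le 1$ (Step 1) and $\|Dh\|_{\infty}$ becomes $\|A\|_{\infty}\le 1$ (Steps 2--3), with every dimensional factor collapsing to $1$. You identify this substitution correctly, so the proposal is in essence the paper's argument and is correct.

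That said, your middle two paragraphs misattribute where the three terms come from, and if executed literally they would lead you astray. The term $2\|f\|_{\infty}\operatorname{Lip}(f)\lambda^{-\frac{K-1}{2}}/((\lambda-1)\sqrt{N})$ is \emph{not} a far-future contribution obtained by Taylor-expanding $A$ in $W_n^+$: it is the Step 1 error of Lemma \ref{constant approx multivariate}, i.e.\ the cost of replacing the \emph{past} block $N^{-1/2}\sum_{j\le n-K-1}\bar{f}^{j}$ by a constant $C_i$ on each element of the partition induced by $\cT_{B}$, $B=\lfloor n-K/2\rfloor$; the error is simply $\|A'\|_{\infty}\cdot\|E\|_{\infty}$, and the future block remains inside the argument of $A$ untouched until Step 3. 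Conversely, the middle term $4\|f\|_{\infty}B_{0}\vartheta^{\frac{K}{2}}$ is not purely a past contribution: it is the sum of two equal pieces $2\|f\|_{\infty}B_{0}\vartheta^{\lceil K/2\rceil}$, one from Step 2 (Corollary \ref{pushforward cor}, replacing $\cL_{n,B+1}(\cL_{B}(\varrho 1_{I_i}/\mu(I_i)))$ by $\cL_{n}(\varrho)$) and one from the $\|g\|_{\infty}D_{0}\vartheta^{\lceil m/2\rceil}$ half of Lemma \ref{splitting lemma} in Step 3, which handles the gap to the future; only the $2\operatorname{Lip}(f)\lambda^{-\frac{K-1}{2}}$ term is the genuinely ``future'' decorrelation term. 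Finally, the covariance bounds of Lemma \ref{rho lem multivariate} play no role in this estimate at all --- they feed into Assumption (B1), not (B2). None of this invalidates the proof, since the actual execution is just the paper's Steps 1--4 with the stated norm substitutions; but the write-up should follow that structure rather than the past/future bookkeeping you describe.
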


By Theorem \ref{f nabla W lemma} and inequalities $\lambda^{-1}\le \vartheta$, $\sqrt{N}\ge 1$ and $\|f\|_{\infty},\operatorname{Lip}(f) \le \| f\|_{\operatorname{Lip}}$ we may deduce the following
\begin{align*}
&|\mu( \bar{f}^n \cdot  \nabla h(v + W_{n} t) ) |
\\
\le \, 
&2d^{2} \|f\|_{\infty}\|D^{2}h\|_{\infty}\dfrac{\operatorname{Lip}(f) \lambda^{-\frac{K-1}{2}}}{\sqrt{N}(\lambda -1)}
+
4d B_{0}\|f\|_{\infty}\|Dh\|_{\infty} \vartheta^{ \frac{K}{2}}
+
2d\Vert Dh \Vert_{\infty}\operatorname{Lip}(f)\lambda^{-\frac{K-1}{2}} 
\\
\le \,
&2d^{2} \|D^{2}h\|_{\infty}\dfrac{\| f\|_{\operatorname{Lip}}^{2}\vartheta^{ \frac{K-1}{2}}}{\vartheta^{-1} -1}
+
4d B_{0} \| f\|_{\operatorname{Lip}}\|Dh\|_{\infty} \vartheta^{\frac{K}{2}}
+
2d\Vert Dh \Vert_{\infty}\| f\|_{\operatorname{Lip}}\vartheta^{\frac{K-1}{2}}.
\end{align*}

Thus when we apply Theorem \ref{thm:main} in the model of expanding circle maps introduced in Section \ref{circle maps} we may choose 
in Assumption (A2) that
\beq
 \tilde{\rho}(K) =2d^{2} \|D^{2}h\|_{\infty}\dfrac{\| f\|_{\operatorname{Lip}}^{2}\vartheta^{ \frac{K-1}{2}}}{\vartheta^{-1} -1}
+
4d B_{0} \| f\|_{\operatorname{Lip}}\|Dh\|_{\infty} \vartheta^{\frac{K}{2}}
+
2d\Vert Dh \Vert_{\infty}\| f\|_{\operatorname{Lip}}\vartheta^{\frac{K-1}{2}}.\label{multivar rho tilde}
\eeq

By similar computations for an univariate $f$ we may choose 
in Assumption (B2) that
\beq
\tilde{\rho}(K) = 2\dfrac{\| f\|_{\operatorname{Lip}}^{2}\vartheta^{ \frac{K-1}{2}}}{\vartheta^{-1} -1}
+
 4B_{0} \| f\|_{\operatorname{Lip}} \vartheta^{\frac{K}{2}}
+
2\| f\|_{\operatorname{Lip}}\vartheta^{\frac{K-1}{2}}.\label{univar rho tilde}
\eeq

For the rest of the section, we assume that $n$ is fixed and define $B=\max\{\lfloor n-K/2\rfloor,0\}$.

Each $\cT$ induces a finite partition of $\bS^{1}$ into intervals $I_{i}$, $i\in J$ such that $\cT$ maps $\operatorname{int}I_{i}$ diffeomorphically on $\bS^{1}\setminus \{0 \}$. We call $\{I_{i}: i\in J\}$ the partition induced by $\cT$.  
The next lemma shows that when $K$ is large, then
$\sum_{i=0}^{n-K-1} \bar{f}^{i}$
is almost a constant in elements $I_{i}\in\bS^{1}$ of the partition induced by $\cT_{B}$.

\begin{lem}\label{constant approx multivariate}
Let $I_{i}$ be an element of the partition induced by $\cT_{B}$. There exists $C_{i}\in \bR^{d}$ such that 
\beqn
\left|(v + W_{n}(x) t)- \left(C_{i}+ \frac{t}{\sqrt{N}}\sum_{j=n+K+1}^{N-1}f^{j}(x)\right)\right|\le \frac{\operatorname{Lip}(f)\lambda^{-\left\lfloor\frac{K}{2}\right\rfloor}}{\sqrt{N}(\lambda-1)}
\eeqn

for every $x\in I_{i}$
\end{lem}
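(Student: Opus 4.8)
The plan is to reduce the assertion to an estimate on the oscillation of the partial Birkhoff sum $\sum_{\ell=0}^{n-K-1} f\circ\cT_\ell$ across a single branch of $\cT_B$, and then to get that oscillation from the strong contraction of $\cT_\ell$ on $I_i$ when $\ell$ lies well below $B$. First I would unwind $W_n$: since $\{0,\dots,N-1\}\setminus[n]_K=\{0,\dots,n-K-1\}\cup\{n+K+1,\dots,N-1\}$, writing $\bar f^\ell=f^\ell-\mu(f^\ell)$ gives
\[
v+W_n(x)\,t=\Big(v-\tfrac{t}{\sqrt{N}}\!\!\sum_{j=n+K+1}^{N-1}\!\!\mu(f^j)\Big)+\tfrac{t}{\sqrt{N}}\sum_{\ell=0}^{n-K-1}\bar f^{\ell}(x)+\tfrac{t}{\sqrt{N}}\!\!\sum_{j=n+K+1}^{N-1}\!\! f^{j}(x).
\]
Fixing any reference point $x_*\in I_i$ and setting $C_i:=v-\tfrac{t}{\sqrt{N}}\sum_{j=n+K+1}^{N-1}\mu(f^j)+\tfrac{t}{\sqrt{N}}\sum_{\ell=0}^{n-K-1}\bar f^{\ell}(x_*)$, the quantity to be bounded equals $\tfrac{t}{\sqrt{N}}\sum_{\ell=0}^{n-K-1}\big(f^{\ell}(x)-f^{\ell}(x_*)\big)$, so (using $t\le1$ and the max-norm on $\bR^d$) its norm is at most $\tfrac{1}{\sqrt{N}}\sup_{x,x'\in I_i}\big|\sum_{\ell=0}^{n-K-1}\big(f\circ\cT_\ell(x)-f\circ\cT_\ell(x')\big)\big|$. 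If $n\le K$ this sum is empty and there is nothing to prove, so I may assume $n>K$, whence also $B=\lfloor n-K/2\rfloor\ge1$.

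For the oscillation I would use the geometric fact that $\cT_\ell$ maps $I_i$ onto an exponentially short arc. Indeed, for $0\le\ell\le B$ write $\cT_B=\cT_{B,\ell+1}\circ\cT_\ell$; since $\cT_B$ is injective on $\operatorname{int}I_i$ (as $I_i$ is a branch of $\cT_B$), so is $\cT_\ell$, and $\cT_\ell$ carries $\operatorname{int}I_i$ onto an interval $J_\ell$ on which $\cT_{B,\ell+1}$ is injective with $\cT_{B,\ell+1}(J_\ell)=\bS^1\setminus\{0\}$. As $\cT_{B,\ell+1}$ is a composition of $B-\ell$ maps each with derivative $\ge\lambda$, it expands lengths by at least $\lambda^{B-\ell}$, so $|J_\ell|\le\lambda^{-(B-\ell)}$ (no wrap-around, since this is $<1$), and hence $\operatorname{diam} f(J_\ell)\le\operatorname{Lip}(f)\,\lambda^{-(B-\ell)}$ because $f$ is Lipschitz for the arc-length metric. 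The definition $B=\lfloor n-K/2\rfloor$ is precisely what yields $B-\ell\ge\lfloor K/2\rfloor+1$ for every $\ell\le n-K-1$ (a one-line check, splitting on the parity of $K$), so summing the geometric series,
\[
\sup_{x,x'\in I_i}\Big|\sum_{\ell=0}^{n-K-1}\big(f\circ\cT_\ell(x)-f\circ\cT_\ell(x')\big)\Big|\le\operatorname{Lip}(f)\sum_{\ell=0}^{n-K-1}\lambda^{-(B-\ell)}\le\operatorname{Lip}(f)\!\!\sum_{m=\lfloor K/2\rfloor+1}^{\infty}\!\!\lambda^{-m}=\frac{\operatorname{Lip}(f)\,\lambda^{-\lfloor K/2\rfloor}}{\lambda-1}.
\]
Combining this with the reduction of the previous paragraph gives the claimed bound $\operatorname{Lip}(f)\,\lambda^{-\lfloor K/2\rfloor}/(\sqrt{N}(\lambda-1))$.

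The only points that need care are: (i) the injectivity-and-expansion argument bounding $|J_\ell|$, where I would pass to lifts on $\bR$ so that "the image of an interval under a strictly increasing $C^1$ map is an interval whose length is the integral of the derivative" is unambiguous, and where one must note $B-\ell\ge1$ to exclude wrap-around of $J_\ell$ around $\bS^1$; and (ii) the index arithmetic $B-\ell\ge\lfloor K/2\rfloor+1$, which is exactly what forces the definition $B=\max\{\lfloor n-K/2\rfloor,0\}$ and must be checked for both parities of $K$. Neither is deep, and the handful of degenerate cases ($n\le K$, or $B$ clamped to $0$, or the index range $\{n+K+1,\dots,N-1\}$ empty) are all immediate, so I do not expect a genuine obstacle.
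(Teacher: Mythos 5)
Your proposal is correct and follows essentially the same route as the paper: split $W_n$ into the blocks $\{0,\dots,n-K-1\}$ and $\{n+K+1,\dots,N-1\}$, absorb the second block's means and the first block's value at a reference point into $C_i$, and bound the oscillation of $\sum_{\ell\le n-K-1} f\circ\cT_\ell$ over $I_i$ by noting that $\cT_\ell(I_i)$ has diameter at most $\lambda^{-(B-\ell)}$ and summing the geometric series, with the same treatment of the degenerate case $n\le K$. Your index check $B-\ell\ge\lfloor K/2\rfloor+1$ and the lift/injectivity justification are slightly more explicit than the paper's, but the argument is the same.
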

\begin{proof}
Assume first that $n\le K$. Then $W_{n}=\frac{1}{\sqrt{N}}\sum_{j=n+K+1}^{N-1} \bar{f^{j}}$. Thus choosing 
$
C_{i}= v - \frac{t}{\sqrt{N}}\sum_{j=n+K+1}^{N-1}\mu(f^{j})
$
yields
\beqn
\left|(v + W_{n}(x) t)- \left(C_{i}+ \frac{t}{\sqrt{N}}\sum_{j=n+K+1}^{N-1}f^{j}(x)\right)\right|= 0.
\eeqn

Assume then that $n> K$. Let $x,y\in I_{i}$ and $j\leq B=n-\lfloor K/2\rfloor$. Then $|\cT_{j}(x)-\cT_{j}(y)|\le \lambda^{j-n+\left\lfloor K/2\right\rfloor}$, which implies
$
|f_{\alpha}\circ \cT_{j}(x)- f_{\alpha}\circ \cT_{j}(y)|\le \operatorname{Lip}(f_{\alpha})\lambda^{j-n+\left\lfloor K/2\right\rfloor}.
$

Thus
\beqn
\left|\frac{t}{\sqrt{N}}\sum_{j=0}^{n-K-1}\bar{f}^{j}_{\alpha}(x)-\frac{t}{\sqrt{N}}\sum_{j=0}^{n-K-1}\bar{f}^{j}_{\alpha}(y)\right|
\le t\frac{\operatorname{Lip}(f_{\alpha})}{\sqrt{N}}\sum_{j=0}^{n-K-1} \lambda^{j-n+\left\lfloor \frac{K}{2}\right\rfloor}
\eeqn
\beqn
\le \lambda^{n-K-1-\left\lfloor n-\frac{K}{2}\right\rfloor}\frac{\operatorname{Lip}(f_{\alpha})}{\sqrt{N}}\sum_{j=0}^{\infty} \lambda^{-j} = \frac{\operatorname{Lip}(f_{\alpha})\lambda^{-\left\lfloor\frac{K}{2}\right\rfloor}}{\sqrt{N}(\lambda-1)}.
\eeqn
Therefore there exists $\bar{C}_{i}\in \bR^{d}$ such that 
\beqn
\left|v+\frac{t}{\sqrt{N}}\sum_{j=0}^{n-K-1}\bar{f}^{j} (x)-\bar{C}_{i}\right| \le \frac{\operatorname{Lip}(f)\lambda^{-\left\lfloor\frac{K}{2}\right\rfloor}}{\sqrt{N}(\lambda-1)}
\eeqn
 for every $x\in I_{i}$. Thus there exists a constant $C_{i}=\bar{C}_{i}-\dfrac{t}{\sqrt{N}}\sum_{j=n+K+1}^{N-1} \mu(f^{j})$ such that
\begin{align*}
&\left|(v + W_{n}(x) t)- \left(C_{i}+ \frac{t}{\sqrt{N}}\sum_{j=n+K+1}^{N-1}f^{j}(x)\right)\right|
\\
=&\left|v+\frac{t}{\sqrt{N}}\sum_{j=0}^{n-K-1}\bar{f}^{j} (x)-\left( C_{i}+\frac{t}{\sqrt{N}}\sum_{j=n+K+1}^{N-1}\mu(f^{j})\right)\right|
\\
\le &\,\frac{\operatorname{Lip}(f)\lambda^{-\left\lfloor\frac{K}{2}\right\rfloor}}{\sqrt{N}(\lambda-1)}
\end{align*}
for every $x\in I_{i}$.
\end{proof}

The next standard lemma shows that the transfer operator decreases the distance of two probability measures in the $L^{1}$ norm.

\begin{lem}\label{pw lemma 1}
Let $\cT_{c,a}$, $\cT_{c,b}$ be two compositions of any maps in $\cM$, where  $a\le b\le c$,
and let $\varrho_{1},\varrho_{2}\in \cD_{L}$, where $L\ge L_{*}$. Then there exist constants   $D_{0}=D_{0}(L,\lambda,A_{*})$ and $\vartheta=\vartheta(\lambda,A_{*})\in \,]0,1[$ such that
\beqn
\Vert\cL_{c,a}(\varrho_{1})-\cL_{c,b}(\varrho_{2})\Vert_{L^{1}}\leq D_{0}\vartheta^{c-b+1}.
\eeqn
\end{lem}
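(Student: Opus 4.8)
The plan is to reduce the estimate on the difference $\cL_{c,a}(\varrho_1)-\cL_{c,b}(\varrho_2)$ to the contraction-type statement already available in \cite{DobbsStenlund_2016} (e.g.\ Lemma 5.6 there), which defines $\vartheta$. First I would observe that the two compositions $\cL_{c,a}$ and $\cL_{c,b}$ share a common tail: writing $\cL_{c,a}=\cL_{c,b+1}\,\cL_{b,a}$ and noting $\cL_{c,b}=\cL_{c,b+1}\,\cL_{b,b}=\cL_{c,b+1}$ (with the convention $\cL_{b,b}=\mathrm{Id}$, or more precisely absorbing $\cL_{T_b}$ suitably), we get
\begin{align*}
\cL_{c,a}(\varrho_1)-\cL_{c,b}(\varrho_2)=\cL_{c,b+1}\bigl(\cL_{b,a}(\varrho_1)-\varrho_2\bigr).
\end{align*}
The inner object $\psi:=\cL_{b,a}(\varrho_1)-\varrho_2$ has integral zero with respect to $m$ (transfer operators preserve $\int \slot\, dm$, and $\varrho_1,\varrho_2$ are densities). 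So the task becomes: estimate $\|\cL_{c,b+1}\psi\|_{L^1}$ for a zero-average $\psi$ obtained as a difference of two elements of $\cD_L$.

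The key step is then to invoke the Lasota--Yorke / contraction property of the composed transfer operator on zero-average densities that belong to (differences of) the cone $\cD_L$: there exist $D_0=D_0(L,\lambda,A_*)$ and $\vartheta=\vartheta(\lambda,A_*)\in\,]0,1[$ such that applying $c-(b+1)+1=c-b$ steps of the transfer operator contracts the $L^1$ norm geometrically, i.e.\ $\|\cL_{c,b+1}\psi\|_{L^1}\le D_0\vartheta^{c-b}\,$ whenever $\psi$ is such a zero-average difference, using that $\|\psi\|_{L^1}$ is itself bounded in terms of $L$. This is exactly the content of Lemma 5.6 (and the surrounding material, e.g.\ Lemma 5.2 on the invariance $\cL\cD_L\subset\cD_{L_*}$) in \cite{DobbsStenlund_2016}; the hypothesis $L\ge L_*$ ensures the relevant densities already sit in the "good" cone so no preliminary regularization steps are lost. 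Reconciling the exponent $c-b$ with the claimed $c-b+1$ is a matter of bookkeeping on whether $\cL_{T_b}$ is counted in the common tail or in $\psi$; I would choose the indexing that produces $\vartheta^{c-b+1}$, which costs at most a harmless factor $\vartheta^{-1}$ absorbed into $D_0$.

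The main obstacle is not conceptual but a matter of correctly matching conventions with \cite{DobbsStenlund_2016}: making sure that (i) the difference $\cL_{b,a}(\varrho_1)-\varrho_2$ genuinely falls in the class for which the geometric $L^1$-contraction is proved there (it is a difference of two densities in $\cD_L$, which is the standard setup for the coupling/contraction argument, but one must check the cone parameter is controlled after the $\cL_{b,a}$ steps, which is where $L\ge L_*$ and \eqref{L* property} enter), and (ii) the number of contraction steps is read off correctly, so that the stated exponent $c-b+1$ is honest. Once the statement is phrased as "$\cL_{c,b+1}$ applied to a zero-average difference of $\cD_L$-densities contracts geometrically in $L^1$," the bound follows immediately from the cited lemma, with $D_0$ and $\vartheta$ inherited from there.
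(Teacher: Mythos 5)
Your proposal is correct and follows essentially the same route as the paper: factor out the common tail, observe the inner difference of $\cD_L$-densities is zero-average and stays in the cone by the invariance property (Lemma 5.2(i) of \cite{DobbsStenlund_2016}), and apply the geometric $L^1$-contraction of Lemma 5.6 there. The only wrinkle is the indexing you already flag: the paper splits as $\cL_{c,a}=\cL_{c,b}\circ\cL_{b-1,a}$ (not at $b+1$), so the common tail $\cL_{c,b}$ consists of exactly $c-b+1$ operators and the exponent $\vartheta^{c-b+1}$ comes out directly, with no factor to absorb into $D_0$.
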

\begin{proof}
Lemma 5.2 (i) in \cite{DobbsStenlund_2016} gives that $\cT_{b-1,a}(\varrho_{1})\in \cD_{L}$. Thus applying 
Lemma 5.6 of the same article gives
\beqn
\Vert\cL_{c,a}(\varrho_{1})-\cL_{c,b}(\varrho_{2})\Vert_{L^{1}}
=\Vert\cL_{c,b}(\cL_{b-1,a}(\varrho_{1})-\varrho_{2}))\Vert_{L^{1}}\leq D_{0}\vartheta^{c-b+1}.
\eeqn
That $D_{0}=D_{0}(L,\lambda,A_{*})$ and $\vartheta=\vartheta(\lambda,A_{*})\in \,]0,1[$ follows from Section 5 of \cite{DobbsStenlund_2016}.
\end{proof}
We define the new constant $L_{1}= \max\left\{L_{*},L_{0}\right\}$.
Lemma \ref{pw lemma 1} now implies that
there exists a constant $B_{0}=B_{0}(\lambda,A_{*},L_{1})=B_{0}(\lambda,A_{*},L_{0})$ such that 
\beq
\Vert\cL_{c,a}(\varrho)-\cL_{c,b}(\varrho)\Vert_{L^{1}}\le B_{0}\vartheta^{c-b+1}. \label{B_0 formula}
\eeq
 
The following result is Lemma 5.2(iii) in \cite{DobbsStenlund_2016}.
\begin{lem}
Let $L\ge L_{*}$, $\varrho_{0}\in \cD_{L}$, $m\ge 1$ and $\cT_{m}$ be a composition of $m$ maps in $\cM$. Then for every $I_{i}$, $i\in J$ it holds that  

\beqn
\cL_{m}\left(\frac{\varrho_{0} 1_{I_{i}}}{\mu_{0}(I_{i})} \right)\in \cD_{L}.
\eeqn 
 \label{pw lemma 2}
\end{lem}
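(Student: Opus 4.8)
The plan is to express $\cL_m(\varrho_0 1_{I_i}/\mu_0(I_i))$ through the single inverse branch of $\cT_m$ over the partition element $I_i$, take logarithms, and bound the derivative directly; the whole point is that for $L\ge L_*$ the resulting log-Lipschitz constant does not exceed $L$, so that the class $\cD_L$ is carried into itself by this slicing-and-pushforward operation.

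Set $F=\cT_m=T_m\circ\cdots\circ T_1$ and $g=\varrho_0 1_{I_i}/\mu_0(I_i)$, a probability density supported on the element $I_i$ of the partition induced by $F$. Since $F$ restricts to a $C^2$ diffeomorphism of $\operatorname{int}I_i$ onto $\bS^1\setminus\{0\}$, there is an inverse branch $\phi\colon\bS^1\setminus\{0\}\to\operatorname{int}I_i$ with $\phi'(x)=1/F'(\phi(x))$; as $g$ vanishes off $I_i$, only this branch contributes to $\cL_m g$, so for $x\neq 0$
\beqn
\psi(x):=\cL_m g(x)=\frac{g(\phi(x))}{F'(\phi(x))}=\frac{\varrho_0(\phi(x))}{\mu_0(I_i)\,F'(\phi(x))}.
\eeqn
This is strictly positive, and $\int_{\bS^1}\psi\,dm=\int_{\bS^1}g\,dm=1$ since $\cL_m$ preserves integrals against $m$ (apply \eqref{transfer op rule} with test function $1$); hence $\psi$ is again a probability density, and it remains only to check the log-Lipschitz condition of Definition~\ref{L defn}.

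Differentiating $\log\psi(x)=\log\varrho_0(\phi(x))-\log F'(\phi(x))+\text{const}$ gives, for a.e.\ $x$,
\beqn
(\log\psi)'(x)=\frac{(\log\varrho_0)'(\phi(x))}{F'(\phi(x))}-\frac{F''(\phi(x))}{F'(\phi(x))^2}.
\eeqn
The first term has modulus at most $L/\lambda$, since $\varrho_0\in\cD_L$ gives $|(\log\varrho_0)'|\le L$ a.e.\ and $F'\ge\lambda^m\ge\lambda$. For the second I would use the standard distortion estimate: the chain rule yields
\beqn
\frac{F''(y)}{F'(y)^2}=\sum_{k=1}^m\frac{T_k''(\cT_{k-1}(y))}{T_k'(\cT_{k-1}(y))}\cdot\frac{\cT_{k-1}'(y)}{F'(y)},
\eeqn
and, using $|T_k''/T_k'|\le A_*/\lambda$ together with $\cT_{k-1}'(y)/F'(y)\le\lambda^{-(m-k+1)}$, one obtains $|F''/F'^2|\le(A_*/\lambda)\sum_{l\ge1}\lambda^{-l}=A_*/(\lambda(\lambda-1))$ on $\operatorname{int}I_i$. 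Hence $|(\log\psi)'|\le L/\lambda+A_*/(\lambda(\lambda-1))$ a.e., and this is $\le L$ exactly when $A_*/(\lambda-1)^2\le L$, which holds for every $L\ge L_*$ because $L_*=A_*\lambda(1-\lambda^{-1})^{-2}=A_*\lambda^3/(\lambda-1)^2\ge A_*/(\lambda-1)^2$. Thus $\log\psi$ is $L$-Lipschitz away from a single point, so $\psi\in\cD_L$.

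I expect the only genuinely delicate point to be the location of that single admissible singularity, rather than the analytic estimate above. The computation produces a jump of $\log\psi$ at $z=F(\partial I_i)=0$; if the singular point $z_0$ of $\varrho_0$ lies in $\operatorname{int}I_i$, then $\log\varrho_0\circ\phi$, hence $\log\psi$, acquires a second jump at $F(z_0)$, which $\cD_L$ does not allow. In the situations where the lemma is used $\varrho_0$ is either globally log-Lipschitz or the partition is arranged (as in \cite{DobbsStenlund_2016}) so that $z_0$ is not interior to any element, leaving $0$ as the only bad point. This is precisely Lemma~5.2(iii) of \cite{DobbsStenlund_2016}, which one may also simply invoke.
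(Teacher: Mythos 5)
The paper offers no proof of Lemma~\ref{pw lemma 2} at all: it is imported verbatim as Lemma~5.2(iii) of \cite{DobbsStenlund_2016}. So your proposal is not an alternative to an internal argument but a self-contained reconstruction, and its analytic core is correct. Restricting $\cL_m$ to the single inverse branch $\phi$ over $I_i$, the identity $(\log\psi)'=(\log\varrho_0)'(\phi)\,/F'(\phi)-F''(\phi)/F'(\phi)^2$ together with the telescoped distortion sum gives $|(\log\psi)'|\le L/\lambda+A_*/(\lambda(\lambda-1))$, and this is $\le L$ exactly when $L\ge A_*/(\lambda-1)^2$, which indeed follows from $L\ge L_*=A_*\lambda^3/(\lambda-1)^2$. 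This is the computation that underlies the cited lemma, so the route is the expected one; what you add over the paper is the actual verification.

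The caveat you raise about the location of the admissible singularity is the one genuine soft spot, and you are right to flag it rather than bury it: for a $\varrho_0\in\cD_L$ whose log-discontinuity $z_0$ lies in $\operatorname{int}I_i$ with $\cT_m(z_0)\ne 0$, the pushforward acquires jumps at both $0$ and $\cT_m(z_0)$, and no derivative estimate can place it in $\cD_L$, which tolerates only one bad point. So the statement in the generality transcribed here really does need either the extra hypothesis that $z_0$ is not interior to $I_i$ or deference to the precise formulation in \cite{DobbsStenlund_2016}. Your remark that this never bites in the present paper is accurate: the lemma is only ever applied with $\varrho_0$ equal to the initial density $\varrho$, the SRB density $\hat{\varrho}_s$, or an image $\cL_n\varrho$ of a globally log-Lipschitz density, all of which are log-Lipschitz on the whole of $\bS^1$, so the pushforward has its single admissible jump at $0$. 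With that caveat made explicit (or with the final appeal to the cited lemma), the proof is complete.
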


Lemmas \ref{pw lemma 1} and \ref{pw lemma 2} yield the following corollary. 
\begin{cor}\label{pushforward cor}
Let $\varrho_{1},\varrho_{2}\in \cD_{L}$, $L\ge L_{*}$, $1<m+1<n$, $
\cT_{m}$ composition of $m$ maps in $\cM$. Then for every $I_{i}$, $i\in J$ it holds 
that
\beqn
\left\| \cL_{n}(\varrho_{1})-\cL_{n,m+1}\left(\cL_{m}\left(\frac{\varrho_{2} 1_{I_{i}}}{\mu_{2}(I_{i})} \right)\right)\right\|_{L^{1}}\leq D_{0}\vartheta^{n-m},
\eeqn
where $D_{0}=D_{0}(L,\lambda,A_{*})$ is the same constant as in Lemma \ref{pw lemma 1}.
\end{cor}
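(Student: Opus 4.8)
The plan is to deduce the estimate directly from Lemmas \ref{pw lemma 1} and \ref{pw lemma 2}, the only real work being to identify the right densities to feed into Lemma \ref{pw lemma 1}.

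First I would abbreviate $g_i := \cL_m\!\left(\dfrac{\varrho_2 1_{I_i}}{\mu_2(I_i)}\right)$. Since $\mu_2(I_i) = \int_{I_i}\varrho_2\,dm$, the function $\varrho_2 1_{I_i}/\mu_2(I_i)$ is a probability density, and Lemma \ref{pw lemma 2} (applied with $\varrho_0 = \varrho_2$, which is legitimate because $\varrho_2 \in \cD_L$, $L \ge L_*$ and $m \ge 1$) gives $g_i \in \cD_L$. So both $\varrho_1$ and $g_i$ lie in $\cD_L$ with $L \ge L_*$.

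Next I would note that the two objects to be compared are pushforwards of $\varrho_1$ and $g_i$ under nested tails of the underlying sequence of maps: $\cL_n(\varrho_1) = \cL_{n,1}(\varrho_1)$ while $\cL_{n,m+1}\!\big(\cL_m(\,\cdots)\big) = \cL_{n,m+1}(g_i)$. Then I apply Lemma \ref{pw lemma 1} to the densities $\varrho_1,\,g_i$ and the compositions $\cT_{n,1}$, $\cT_{n,m+1}$; in the notation of that lemma this is the admissible case $a = 1 \le b = m+1 \le c = n$, where the inequalities hold because $1 < m+1 < n$. The conclusion reads
\[
\big\|\cL_{n,1}(\varrho_1) - \cL_{n,m+1}(g_i)\big\|_{L^1} \;\le\; D_0\,\vartheta^{\,c-b+1} \;=\; D_0\,\vartheta^{\,n-m},
\]
with exactly the constants $D_0 = D_0(L,\lambda,A_*)$ and $\vartheta = \vartheta(\lambda,A_*) \in \,]0,1[$ furnished by Lemma \ref{pw lemma 1}. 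Unwinding the abbreviation $g_i$ gives the claim.

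There is no genuinely difficult step: the content is entirely carried by the two cited lemmas, and what remains is bookkeeping. The two things to watch are (i) invoking Lemma \ref{pw lemma 2} to keep the inner pushforward $g_i$ inside $\cD_L$, so that Lemma \ref{pw lemma 1} — which requires \emph{both} of its density arguments in $\cD_L$ — applies as stated, and (ii) the exponent count $c-b+1 = n-(m+1)+1 = n-m$.
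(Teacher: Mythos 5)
Your proof is correct and is exactly the argument the paper intends (the paper gives no written proof, stating only that the corollary follows from Lemmas \ref{pw lemma 1} and \ref{pw lemma 2}): Lemma \ref{pw lemma 2} keeps $g_i=\cL_{m}\bigl(\varrho_{2}1_{I_{i}}/\mu_{2}(I_{i})\bigr)$ in $\cD_{L}$, and Lemma \ref{pw lemma 1} with $a=1$, $b=m+1$, $c=n$ gives the exponent $c-b+1=n-m$. Both the choice of densities and the bookkeeping match the paper's intended route.
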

Similarly to Lemma \ref{pw lemma 1}, the previous corollary holds for two probability densities $\varrho_{1},\varrho_{2}$ in the class $\cD_{L_{0}}\subset \cD_{L_{1}}$. The constant $D_{0}$ is $B_{0}(L_{0},\lambda,A_{*})$, where $B_{0}$ is the same constant as in \eqref{B_0 formula}.

The content of the next lemma is exponential decay of pair correlations when any sequence of transformations in $\cM$ is applied.

\begin{lem}\label{splitting lemma}
Let $g,h: \bS^{1}\rightarrow \bR $, where $g$ is Lipschitz, $h$ bounded and $\cT_{m}=T_{m}\circ ...\circ T_{1} $ a composition of maps in $\cM$ and $\varrho_{0}\in \cD_{L}$, where $L\ge L_{*}$. Then 
\beqn
\left|\int_{\bS^{1}} gh\circ \cT_{m}\varrho_{0} dm - \int_{\bS^{1}} g\varrho_{0} dm\int_{\bS^{1}}h\circ \cT_{m}\varrho_{0} dm\right|\le  \Vert h \Vert_{\infty}\left(2\operatorname{Lip}(g)\lambda^{-\left\lfloor \frac{m}{2}\right\rfloor} + \Vert g\Vert_{\infty}D_{0}\vartheta^{\left\lceil \frac{m}{2}\right\rceil}\right),
\eeqn
where $D_{0}=D_{0}(L,\lambda,A_{*})$  is the same constant as in Lemma \ref{pw lemma 1}.
\end{lem}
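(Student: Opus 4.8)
The plan is to pass to transfer operators and balance two opposing mechanisms inside $\cT_m$: the first $\lfloor m/2\rfloor$ maps blow the graph of $g$ up into thin pieces on which $g$ is essentially constant, while the remaining $\lceil m/2\rceil$ maps push the corresponding pieces of $\varrho_0$ onto a common profile.

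First I would apply the rule \eqref{transfer op rule} $m$ times. Writing $\mu_0$ for the measure with density $\varrho_0$ and $\mu_0(g)=\int g\varrho_0\,dm$, this gives
\begin{align*}
&\int_{\bS^1}gh\circ\cT_m\,\varrho_0\,dm-\int_{\bS^1}g\varrho_0\,dm\int_{\bS^1}h\circ\cT_m\,\varrho_0\,dm\\
&\qquad=\int_{\bS^1}h\bigl(\cL_{\cT_m}(g\varrho_0)-\mu_0(g)\cL_{\cT_m}(\varrho_0)\bigr)\,dm,
\end{align*}
so the left-hand side is at most $\|h\|_\infty\|\cL_{\cT_m}(g\varrho_0)-\mu_0(g)\cL_{\cT_m}(\varrho_0)\|_{L^1}$, and it remains to bound this $L^1$ norm by $2\operatorname{Lip}(g)\lambda^{-\lfloor m/2\rfloor}+\|g\|_\infty D_0\vartheta^{\lceil m/2\rceil}$. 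For $m\le1$ this is immediate from $|\mu_0(g\varphi)-\mu_0(g)\mu_0(\varphi)|\le\mu_0(|g-\mu_0(g)|)\|\varphi\|_\infty$ and $\mu_0(|g-\mu_0(g)|)\le\operatorname{Lip}(g)$, so I would assume $m\ge2$.

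Then I would split $\cT_m=\cT_{m,m_1+1}\circ\cT_{m_1}$ with $m_1=\lfloor m/2\rfloor$ and $m_2=m-m_1=\lceil m/2\rceil$, so that $\cL_{\cT_m}=\cL_{m,m_1+1}\cL_{m_1}$. Let $\{I_i\}_{i\in J}$ be the partition induced by $\cT_{m_1}$; since each map has $\inf T'\ge\lambda$ and $\cT_{m_1}$ maps $\operatorname{int}I_i$ diffeomorphically onto $\bS^1\setminus\{0\}$, every $I_i$ has length at most $\lambda^{-m_1}$. Fixing $x_i\in I_i$ and setting $c_i=g(x_i)$, the bound $|g-c_i|\le\operatorname{Lip}(g)\lambda^{-m_1}$ on $I_i$ together with the $L^1$-contractivity of transfer operators (which follows from their positivity and \eqref{transfer op rule}) gives, for $E_1:=\cL_{m_1}(g\varrho_0)-\sum_ic_i\cL_{m_1}(\varrho_0 1_{I_i})$, the estimate $\|E_1\|_{L^1}\le\sum_i\int_{I_i}|g-c_i|\varrho_0\,dm\le\operatorname{Lip}(g)\lambda^{-m_1}$. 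I would then put $\psi_i=\mu_0(I_i)^{-1}\cL_{m_1}(\varrho_0 1_{I_i})$; Lemma \ref{pw lemma 2} shows $\psi_i\in\cD_L$, and with $\Psi:=\cL_{\cT_m}(\varrho_0)=\sum_i\mu_0(I_i)\cL_{m,m_1+1}(\psi_i)$, Lemma \ref{pw lemma 1} applied to the common composition $\cT_{m,m_1+1}$ gives $\|\cL_{m,m_1+1}(\psi_i)-\cL_{m,m_1+1}(\psi_j)\|_{L^1}\le D_0\vartheta^{m-m_1}$, hence, averaging over $j$ with weights $\mu_0(I_j)$, $\|\cL_{m,m_1+1}(\psi_i)-\Psi\|_{L^1}\le D_0\vartheta^{m_2}$ for every $i$. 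Finally I would use the decomposition
\begin{align*}
\cL_{\cT_m}(g\varrho_0)-\mu_0(g)\Psi&=\sum_ic_i\mu_0(I_i)\bigl(\cL_{m,m_1+1}(\psi_i)-\Psi\bigr)\\
&\quad+\Bigl(\sum_ic_i\mu_0(I_i)-\mu_0(g)\Bigr)\Psi+\cL_{m,m_1+1}(E_1),
\end{align*}
whose three summands have $L^1$ norm at most $\|g\|_\infty D_0\vartheta^{m_2}$ (using $\sum_i|c_i|\mu_0(I_i)\le\|g\|_\infty$), $\operatorname{Lip}(g)\lambda^{-m_1}$ (since $|\sum_ic_i\mu_0(I_i)-\mu_0(g)|\le\sum_i\int_{I_i}|c_i-g|\varrho_0\,dm$), and $\|E_1\|_{L^1}\le\operatorname{Lip}(g)\lambda^{-m_1}$, respectively; adding these and substituting $m_1=\lfloor m/2\rfloor$, $m_2=\lceil m/2\rceil$ yields the claim.

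The step I expect to be the crux is the uniform contraction $\|\cL_{m,m_1+1}(\psi_i)-\Psi\|_{L^1}\le D_0\vartheta^{m_2}$, since this is the only place the genuine mixing of the maps in $\cM$ enters, and it depends on first verifying that the normalized pieces $\psi_i$ lie in the forward-invariant class $\cD_L$ (via Lemma \ref{pw lemma 2}) so that Lemma \ref{pw lemma 1} applies. Everything else is $L^1$ bookkeeping together with the elementary contraction $|I_i|\le\lambda^{-m_1}$ of the partition intervals and the Lipschitz estimate on each of them.
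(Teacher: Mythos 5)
Your proof is correct and follows essentially the same route as the paper: split at $\lfloor m/2\rfloor$, use the partition induced by $\cT_{\lfloor m/2\rfloor}$ to replace $g$ by a piecewise constant (two errors of size $\operatorname{Lip}(g)\lambda^{-\lfloor m/2\rfloor}$), and use the memory-loss estimate of Lemma \ref{pw lemma 1} on the normalized densities $\mu_0(I_i)^{-1}\cL_{\lfloor m/2\rfloor}(\varrho_0 1_{I_i})\in\cD_L$ (via Lemma \ref{pw lemma 2}) to get the $\|g\|_\infty D_0\vartheta^{\lceil m/2\rceil}$ term; your three summands correspond exactly to the paper's $E_1,E_2,E_3$. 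The only cosmetic differences are that you phrase everything as an $L^1$ bound on $\cL_{\cT_m}(g\varrho_0)-\mu_0(g)\cL_{\cT_m}(\varrho_0)$, factoring out $\|h\|_\infty$ at the start, and that you rederive the content of Corollary \ref{pushforward cor} by comparing pairs $\psi_i,\psi_j$ and averaging rather than citing it directly.
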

\begin{proof}
Let $\{I_{i}:i\in J\}$ be the partition induced by $\cT_{\lfloor m/2\rfloor}$. Thus $|I_{i}|\leq \lambda^{-\lfloor m/2\rfloor}$. Define $g_{i}=\dfrac{\int_{I_{i}}g\varrho_{0} dm }{\int_{I_{i}} \varrho_{0} dm}$. We have $g_{i}=g(x_{i})$ for some $x_{i}\in I_{i}$. Thus
\begin{align}
\begin{split}
\int_{\bS^{1}} gh\circ \cT_{m}\varrho_{0} dm &=\sum_{i}\int_{I_{i}} gh\circ \cT_{m}\varrho_{0} dm
\\
&= \sum_{i}\left(g_{i}\int_{I_{i}} h\circ \cT_{m}\varrho_{0} dm +\int_{I_{i}} (g-g_{i})h\circ \cT_{m}\varrho_{0} dm \right)
\\
&=\sum_{i}g_{i}\int_{I_{i}} h\circ \cT_{m}\varrho_{0} dm + E_{1},\label{expr 1}
\end{split}
\end{align}
where $|E_{1}|\le \sum_{i}\operatorname{Lip}(g)\lambda^{-\lfloor m/2\rfloor}\Vert h\Vert_{\infty}\int_{I_{i}}|\varrho_{0}|dm=\operatorname{Lip}(g)\lambda^{-\lfloor m/2\rfloor}\Vert h\Vert_{\infty}$. Furthermore by the properties of the transfer operator:
\begin{align*}
\int_{I_{i}} h\circ \cT_{m}\varrho_{0} dm &=\int_{\bS^{1}}  h\circ \cT_{m,\lfloor m/2\rfloor +1} \circ \cT_{m/2}\varrho_{0} 1_{I_{i}} dm
\\
&=\mu_{0}(I_{i})\int_{\bS^{1}} h \cL_{m,\left\lfloor m/2\right\rfloor+1}\left( \cL_{\left\lfloor m/2\right\rfloor} \left( \frac{\varrho_{0} 1_{I_{i}}}{\mu_{0}1_{I_{i}}}\right)  \right) dm
\\
&= \mu_{0}(I_{i})\left(\int_{\bS^{1}} h \cL_{m}\left(\varrho_{0}\right) dm+ E_{i}\right),
\end{align*}

where $|E_{i}|\le \Vert h\Vert_{\infty}D_{0}\vartheta^{m-\lfloor m/2\rfloor}= \Vert h\Vert_{\infty}D_{0}\vartheta^{\lceil m/2\rceil}$ by Corollary \ref{pushforward cor}.
Thus \eqref{expr 1} is
\begin{align}
\begin{split}
&\sum_{i}\left(g_{i}\mu_{0}(I_{i})\left(\int_{\bS^{1}} h \cL_{m}\left(\varrho_{0}\right) dm+ E_{i}\right)\right) + E_{1}
\\
=&\sum_{i}g_{i}\mu_{0}(I_{i})\int_{\bS^{1}} h \cL_{m}\left(\varrho_{0}\right) dm+ \sum_{i}\left(\mu_{0}(I_{i})g_{i}E_{i}\right) + E_{1}
\\
=&\sum_{i}\int_{\bS^{1}}g_{i}1_{I_{i}}\varrho_{0}dm\int_{\bS^{1}} h \circ\cT_{m}\varrho_{0} dm+ E_{2} + E_{1},\label{expr 2}
\end{split}
\end{align}
where $|E_{2}|\le \Vert g\Vert_{\infty}\Vert h\Vert_{\infty}D_{0}\vartheta^{\lceil m/2\rceil}$. Furthermore we have
\beqn
\left|\sum_{i}\int_{\bS^{1}}g_{i}1_{I_{i}}\varrho_{0}dm- \int_{\bS^{1}}g\varrho_{0}dm\right|=\left|\int_{\bS^{1}}\left(\sum_{i}g_{i}1_{I_{i}}-g\right)\varrho_{0}dm \right|\leq \operatorname{Lip}(g)\lambda^{-\left\lfloor \frac{m}{2}\right\rfloor}.
\eeqn
From which it follows that
\beq
\left|\sum_{i}\left(\int_{\bS^{1}}g_{i}1_{I_{i}}\varrho_{0}dm \int_{\bS^{1}}h\circ \cT_{m}\varrho_{0} dm \right)- \int_{\bS^{1}}g\varrho_{0}dm \int_{\bS^{1}}h\circ \cT_{m}\varrho_{0} dm \right|\le \operatorname{Lip}(g)\lambda^{-\left\lfloor \frac{m}{2}\right\rfloor}\|h\|_{\infty}.\label{expr 3}
\eeq
By \eqref{expr 1}, \eqref{expr 2} and \eqref{expr 3}, we have
\beqn
\int_{\bS^{1}} gh\circ \cT_{m}\varrho_{0} dm=\int_{\bS^{1}}g\varrho_{0}dm\int_{\bS^{1}} h \circ\cT_{m}\varrho_{0} dm+ E_{3} +E_{2} + E_{1},
\eeqn
where $|E_{3}|\le \operatorname{Lip}(g)\lambda^{-\lfloor m/2\rfloor} \Vert h \Vert_{\infty}$.
Thus 
\begin{align*}
&\left|\int_{\bS^{1}} gh\circ \cT_{m}\varrho_{0} dm-\int_{\bS^{1}}g\varrho_{0}dm\int_{\bS^{1}} h \circ\cT_{m}
\varrho_{0} dm\right|
\\
\le\,& 2\operatorname{Lip}(g)\lambda^{-\left\lfloor \frac{m}{2}\right\rfloor} \Vert h \Vert_{\infty}+ \Vert g\Vert_{\infty}\Vert h
\Vert_{\infty}D_{0}\vartheta^{\left\lceil \frac{m}{2}\right\rceil}.
\end{align*}

\end{proof}

\subsection{The proof of Theorem \ref{f nabla W lemma}}
The overall strategy used in the following proof is described in section 7 of \cite{HellaLeppanenStenlund_2016}.

\begin{proof}
\noindent{\bf Step 1.}
In Step 1 we split the measure $\mu$ to a sum of conditional measures on small intervals $I_{i}$. In these intervals $W_{n}=\frac{1}{\sqrt{N}}\sum_{j=0,j\notin [n]_{K}}^{N-1}\bar{f}^{j}$ can be approximated by $C_{i}+\sum_{j=n+K+1}^{N-1}\bar{f}^{j}$ with only small error. Here $C_{i}\in \bR^{d}$ depends on the interval. The choice of intervals $I_{i}$ is delicate. The smaller the intervals are, the smaller is the error made in Step 1. However, for the purposes of computations in Steps 2 and 3 only very specific choices produce small errors. 

Recall that $B=\max\{\lfloor n-K/2\rfloor,0\}$.
Let $\cI=\{I_{i}:i\in J \}$ be the partition of $\bS^{1}$ induced by $\cT_{B}$. We may represent the measure $\mu$ as $\sum_{i} 
\mu_{i} $, where $\mu_{i}(U)=\mu(U\cap I_{i})$, $U\subset 
\bS^{1}$. Thus
\beq \label{eq:ext1}
\mu( \bar{f}^n \cdot  \nabla h(v + W_{n} t) )=\sum_{i}
\int_{I_{i}}\sum_{\alpha}\left(f^n_{\alpha}-\mu(f^{n}
_{\alpha})\right) \cdot  \partial_{\alpha} h(v + W_{n} t)
\varrho dm. 
\eeq
By Lemma \ref{constant approx multivariate} we may write
$
v + W_{n}(x) t= C_{i}+ \dfrac{t}{\sqrt{N}}\sum_{j=n+K+1}^{N-1}f^{j}(x)+ E(x),
$
where $\|E\|_{\infty}\le  \dfrac{\operatorname{Lip}(f)\lambda^{-\left\lfloor\frac{K}{2}\right\rfloor}}{\sqrt{N}(\lambda-1)}$.
Now the right side of \eqref{eq:ext1} equals

\begin{align}
&\sum_{\alpha} \sum_{i}\int_{I_{i}}\left(\left(f^{n}_{\alpha}-\mu(f^{n}_{\alpha})\right) \partial_{\alpha} h\left(C_{i}+ \frac{t}{\sqrt{N}}\sum_{j=n+K+1}^{N-1}f^{j}+E\right)\right) \varrho dm\nonumber
\\
= &\sum_{\alpha} \sum_{i}\int_{I_{i}}\left(\left(f^{n}_{\alpha}-\mu(f^{n}_{\alpha})\right) \partial_{\alpha} h\left(C_{i}+ \frac{t}{\sqrt{N}}\sum_{j=n+K+1}^{N-1}f^{j}\right)\right) \varrho dm\nonumber
\\
\begin{split}
+
&\sum_{\alpha} \sum_{i}\int_{I_{i}}\left(f^{n}_{\alpha}-\mu(f^{n}_{\alpha})\right)
\\
\cdot &\left(\partial_{\alpha} h\left(C_{i}+ \frac{t}{\sqrt{N}}\sum_{j=n+K+1}^{N-1}f^{j}+E\right)- \partial_{\alpha} h\left(C_{i}+ \frac{t}{\sqrt{N}}\sum_{j=n+K+1}^{N-1}f^{j}\right)\right) \varrho dm\label{E' term}
\end{split}
\\
= &\sum_{\alpha}\sum_{i}\int_{I_{i}}\left(\left(f^{n}_{\alpha}-\mu(f^{n}_{\alpha})\right) \partial_{\alpha} h\left(C_{i}+ \frac{t}{\sqrt{N}}\sum_{j=n+K+1}^{N-1}f^{j}\right)\right) \varrho dm +E',\nonumber
\end{align}
where 
\begin{align*}
|E'|=|\eqref{E' term}| 
\le &\,\sum_{\alpha}\|f^{n}_{\alpha}-\mu(f^{n}_{\alpha})\|_{\infty}
\\
&\cdot \int_{\bS^{1}}\left|\partial_{\alpha} h\left(C_{i}+ \frac{t}{\sqrt{N}}\sum_{j=n+K+1}^{N-1}f^{j}+E\right)- \partial_{\alpha} h\left(C_{i}+ \frac{t}{\sqrt{N}}\sum_{j=n+K+1}^{N-1}f^{j}\right) \right|\varrho dm
\\
\le &\,2\|f\|_{\infty}\sum_{\alpha} \sum_{\beta}\|\partial_{\beta}(\partial_{\alpha} h)\|_{\infty} \| E_{\beta}\|_{\infty}
\\
\le &\,2d^{2} \|f\|_{\infty}\|D^{2}h\|_{\infty}\dfrac{\operatorname{Lip}(f) \lambda^{- \lfloor \frac{K}{2}\rfloor}}{\sqrt{N}(\lambda -1)}.
\end{align*}

Thus we now have
\begin{align}\begin{split}
&\bigg|\mu( f^n \cdot  \nabla h(v + W_{n} t) ) 
-\sum_{\alpha} \sum_{i}\int_{I_{i}}\left(\left(f^{n}_{\alpha}-\mu(f^{n}_{\alpha})\right) \partial_{\alpha} h\left(C_{i}+ \frac{t}{\sqrt{N}}\sum_{j=n+K+1}^{N-1}f^{j}\right)\right) \varrho dm \bigg|
\\ 
&\le 
2d^{2} \|f\|_{\infty}\|D^{2}h\|_{\infty}\dfrac{\operatorname{Lip}(f) \lambda^{- \lfloor \frac{K}{2}\rfloor}}{\sqrt{N}(\lambda -1)}. \end{split} \label{STEP 1}
\end{align}

\noindent{\bf Step 2.}
In Step 2 we modify the integral in the previous equation. The trick done in Step 1 enables to write the integral in \eqref{STEP 1} as
\begin{align*}
\int_{\bS^{1}}G\circ \cT_{n}\varrho 1_{I_{i}} dm &= \mu(I_{i})\int_{\bS^{1}}G\circ \cT_{n,B+1}\cL_{B}\left(\frac{\varrho 1_{I_{i}}}{\mu(I_{i})}\right) dm \\
&= \mu(I_{i})\int_{\bS^{1}}G \cL_{n,B+1}\left(\cL_{B}\left(\frac{\varrho 1_{I_{i}}}{\mu(I_{i})}\right)\right)  dm,
\end{align*}
where $G$ is some function on $\bS^{1}$. By Corollary \ref{pushforward cor}, $\cL_{n}(\varrho)\approx \cL_{n,B+1}\left(\cL_{B}\left(\dfrac{\varrho 1_{I_{i}}}{\mu(I_{i})} \right)\right)$. Thus in the Step 3 we are only left to evaluate $\int_{\bS^{1}}G \cL_{n}(\varrho)dm$.
  
Beginning Step 2, notice that we can write
\begin{align}
\begin{split}
&\sum_{\alpha} \sum_{i}\int_{I_{i}}\left(\left(f^{n}_{\alpha}-\mu(f^{n}_{\alpha})\right) \partial_{\alpha} h\left(C_{i}+ \frac{t}{\sqrt{N}}\sum_{j=n+K+1}^{N-1}f^{j}\right)\right) \varrho dm 
\\
=&\sum_{\alpha} \sum_{i}\int_{I_{i}}\left(\left(f_{\alpha}\circ\cT_{n,B+1}-\mu(f^{n}_{\alpha})\right) \partial_{\alpha} h\left(C_{i}+ \frac{t}{\sqrt{N}}\sum_{j=n+K+1}^{N-1}f\circ \cT_{j,B+1}\right)\right)\circ \cT_{B} \varrho dm.\label{beg step2}
\end{split}
\end{align}

We introduce the notation 
$
\tilde{W}_{l,i}=C_{i}+ \frac{t}{\sqrt{N}}\sum_{j=n+K+1}^{N-1}f\circ \cT_{j,l}.
$

Let $ 0\le l_{1}\le l_{2} \le n+K$. Then we see that
\beq \label{pushing tilde W}
\tilde{W}_{l_{1},i}=\left(C_{i}+ \frac{1}{\sqrt{N}}\sum_{j=n+K+1}^{N-1}f\circ \cT_{j,l_{2}+1} \right)\circ \cT_{l_{2},l_{1}}=\tilde{W}_{l_{2}+1,i}\circ \cT_{l_{2},l_{1}}.
\eeq
Now by using the properties of the transfer operator and \eqref{pushing tilde W}
\begin{align}\begin{split}
 &\sum_{\alpha} \sum_{i}\int_{I_{i}}\left(\left( f_{\alpha}\circ \cT_{n,B+1}-\mu(f^{n}_{\alpha})\right) \partial_{\alpha} h\left(C_{i}+ \frac{t}{\sqrt{N}}\sum_{j=n+K+1}^{N-1}f\circ \cT_{j,B+1}\right)\right)\circ \cT_{B} \varrho dm 
\\
= &\sum_{\alpha} \sum_{i}\int_{\bS^{1}}\left(\left( f_{\alpha}\circ \cT_{n,B+1}-\mu(f^{n}_{\alpha})\right) \partial_{\alpha} h\left(C_{i}+ \frac{t}{\sqrt{N}}\sum_{j=n+K+1}^{N-1}f\circ \cT_{j,B+1}\right)\right)\cL_{B}(\varrho 1_{I_{i}}) dm 
\\
= &\sum_{\alpha} \sum_{i}\mu(I_{i})\int_{\bS^{1}}\left(\left(f_{\alpha}\circ \cT_{n,B+1}-\mu(f^{n}_{\alpha})\right) \partial_{\alpha} h(\tilde{W}_{B+1,i})\right) \cL_{B} \left(\frac{\varrho 1_{I_{i}}}{\mu_{0}(I_{i})}\right) dm
\\
= &\sum_{\alpha} \sum_{i}\mu(I_{i})\int_{\bS^{1}}\left(\left(f_{\alpha}-\mu(f^{n}_{\alpha})\right) \partial_{\alpha}h(\tilde{W}_{n+1,i})\right)\circ \cT_{n,B+1} \cL_{B}\left(\frac{\varrho 1_{I_{i}}}{\mu_{0}(I_{i})}\right) dm
\\
= &\sum_{\alpha} \sum_{i}\mu(I_{i})\int_{\bS^{1}}\left(\left(f_{\alpha}-\mu(f^{n}_{\alpha})\right) \partial_{\alpha}h(\tilde{W}_{n+1,i})\right)\cL_{n,B+1}\left(\cL_{B}\left(\frac{\varrho 1_{I_{i}}}{\mu_{0}(I_{i})}\right)\right) dm. \end{split}\label{pushforwarding}
\end{align}
Since $\varrho \in \cD_{L_{0}}$, 
Corollary \ref{pushforward cor} yields
\beq
\left\|\cL_{n,B+1}\left(\cL_{B}\left(\frac{\varrho 1_{I_{i}}}{\mu_{0}(I_{i})}\right)\right)-\cL_{n}(\varrho)\right\|_{L_{1}} \le B_{0}\vartheta^{n-B},
\label{eq: big B}
\eeq 
where $B_{0}=B_{0}(\lambda,A_{*},L_{0})$, $\vartheta=\vartheta(A_{*},\lambda)$. If $\lfloor n-K/2\rfloor\le 0$, then $B=0$ and
\beq
\cL_{n,B+1}\left(\cL_{B}\left(\frac{\varrho 1_{I_{i}}}{\mu_{0}(I_{i})}\right)\right)
=\cL_{n}(\varrho). \label{eq:B=0}
\eeq

From \eqref{eq: big B} and \eqref{eq:B=0} it follows that 

\beq
\left\|\cL_{n,B+1}\left(\cL_{B}\left(\frac{\varrho 1_{I_{1}}}{\mu_{0}(I_{i})}\right)\right)-\cL_{n}(\varrho)\right\|_{L_{1}} \le B_{0}\vartheta^{\left\lceil \frac{K}{2}\right\rceil}.
\label{eq: any B}
\eeq

Now $\|\left(f_{\alpha}-\mu(f^{n}_{\alpha})\right) \partial_{\alpha}h(\tilde{W}_{n+1,i})\|_{\infty}\le 2\|f\|_{\infty}\|Dh\|_{\infty}$ for every $\alpha$, and thus \eqref{beg step2}, \eqref{pushforwarding} and \eqref{eq: any B} give

\begin{align} \begin{split}
&\bigg|\sum_{\alpha} \sum_{i}\int_{I_{i}}\left(\left(f^{n}_{\alpha}-\mu(f^{n}_{\alpha})\right) \partial_{\alpha} h\left(C_{i}+ \frac{t}{\sqrt{N}}\sum_{j=n+K+1}^{N-1}f^{j}\right)\right) \varrho dm 
\\
&\,- \sum_{\alpha} \sum_{i}\mu(I_{i})\int_{\bS^{1}}\left(\left(f_{\alpha}-\mu(f_{\alpha}^{n})\right) \partial_{\alpha}h(\tilde{W}_{n+1,i})\right)\cL_{n}(\varrho) dm \bigg|
\\
\le \,&2d \|f\|_{\infty}\|Dh\|_{\infty}B_{0} \vartheta^{\left\lceil \frac{K}{2}\right\rceil}.\end{split}\label{STEP 2}
\end{align}
\noindent{\bf Step 3.} In Step 3 we use Lemma \ref{splitting lemma} to show that 
$f_{\alpha}-\mu(f_{\alpha}^{n})$ and  $\partial_{\alpha}h(\tilde{W}_{n+1,i})$ are nearly uncorrelated. Furthermore $\int_{\bS^{1}}(f_{\alpha}-\mu(f_{\alpha}^{n})) \cL_{n}(\varrho) dm=0$. These facts then yield that
\beqn
\sum_{\alpha} \sum_{i}\mu(I_{i})\int_{\bS^{1}}\left(\left(f_{\alpha}-\mu(f_{\alpha}^{n})\right) \partial_{\alpha}h(\tilde{W}_{n+1,i})\right)\cL_{n}(\varrho) dm \approx 0.
\eeqn

More precisely, first \eqref{pushing tilde W} gives
\begin{align*}
&\sum_{\alpha} \sum_{i}\mu(I_{i})\int_{\bS^{1}}\left(f_{\alpha}-\mu(f_{\alpha}^{n})\right) \partial_{\alpha}h(\tilde{W}_{n+1,i})\cL_{n}(\varrho) dm 
\\
=&\sum_{\alpha} \sum_{i}\mu(I_{i})\int_{\bS^{1}}\left(f_{\alpha}-\mu(f_{\alpha}^{n})\right)\partial_{\alpha}h(\tilde{W}_{n+K+1,i})\circ \cT_{n+K,n+1}\cL_{n}(\varrho) dm. 
\end{align*}

Lemma \ref{splitting lemma} then yields

\begin{align*}
&\bigg|\int_{\bS^{1}}\left(f_{\alpha}-\mu(f_{\alpha}^{n})\right)\partial_{\alpha}h(\tilde{W}_{n+K+1,i})\circ \cT_{n+K,n+1}\cL_{n}(\varrho) dm
\\
&\,-\int_{\bS^{1}}\left(f_{\alpha}-\mu(f^{n}_{\alpha})\right)\cL_{n}(\varrho) dm \int_{\bS^{1}} \partial_{\alpha}h(\tilde{W}_{n+K+1,i})\circ \cT_{n+K,n+1}\cL_{n}(\varrho) dm
\bigg|
\\
\le
\,&\Vert Dh \Vert_{\infty}\left(2\operatorname{Lip}(f)\lambda^{-\left\lfloor \frac{K}{2}\right\rfloor} + 2\Vert f\Vert_{\infty}B_{0}\vartheta^{\left\lceil \frac{K}{2}\right\rceil}\right).
\end{align*}

Hence $\int_{\bS^{1}}\left(f_{\alpha}-\mu(f^{n}_{\alpha})\right)\cL_{n}(\varrho) dm=0$ and $\sum_{i}\mu(I_{i})=1$, and we deduce

\begin{align}
\begin{split}
&\bigg|\sum_{\alpha} \sum_{i}\mu(I_{i})\int_{\bS^{1}}\left(f_{\alpha}-\mu(f_{\alpha}^{n})\right) \partial_{\alpha}h(\tilde{W}_{n+1,i})\cL_{n}(\varrho) dm \bigg|
\\
\le\, &d\Vert Dh \Vert_{\infty}\left(2\operatorname{Lip}(f)\lambda^{-\left\lfloor  \frac{K}{2}\right\rfloor} + 2\Vert f\Vert_{\infty}B_{0}\vartheta^{\left\lceil \frac{K}{2}\right\rceil}\right).\end{split}\label{STEP 3}
\end{align}
\noindent{\bf Step 4.}
Using the triangle inequality and the estimates collected in \eqref{STEP 1},\eqref{STEP 2} and \eqref{STEP 3}  we get
\begin{align*}
\left|\mu( \bar{f}^n \cdot  \nabla h(v + W_{n} t) )\right|\le \,&2d^{2} \|f\|_{\infty}\|D^{2}h\|_{\infty}\dfrac{\operatorname{Lip}(f) \lambda^{- \left\lfloor \frac{K}{2}\right\rfloor}}{\sqrt{N}(\lambda -1)}
\\
&+  
2d \|f\|_{\infty}\|Dh\|_{\infty}B_{0} \vartheta^{\left\lceil \frac{K}{2}\right\rceil}
\\
&+ 
d\Vert Dh \Vert_{\infty}\left(2\operatorname{Lip}(f)\lambda^{-\left\lfloor\frac{K}{2}\right\rfloor} + 2\Vert f\Vert_{\infty}B_{0}\vartheta^{\left\lceil \frac{K}{2}\right\rceil}\right)
\\
\le\, &2d^{2} \|f\|_{\infty}\|D^{2}h\|_{\infty}\dfrac{\operatorname{Lip}(f) \lambda^{-\frac{K-1}{2}}}{\sqrt{N}(\lambda -1)}
\\
&+
4d B_{0}\|f\|_{\infty}\|Dh\|_{\infty} \vartheta^{ \frac{K}{2}}
+
2d\Vert Dh \Vert_{\infty}\operatorname{Lip}(f)\lambda^{-\frac{K-1}{2}} 
\end{align*}
This finishes the proof of Theorem \ref{f nabla W lemma}.  
\end{proof}
Theorem \ref{fAW lemma} is proved with exactly same steps, by replacing $\nabla h$ by $A$, $v$ by $0$ and $t$ by ~$1$.

\subsection{Finishing the proofs of Theorems in Section \ref{circle maps}}
After calculating upper bounds for $\rho$ and $\tilde{\rho}$ 
we are now ready to prove the theorems and corollaries in Section \ref{circle maps}.

We use Theorems \ref{thm:main} and \ref{main thm 1d} to prove the results in Section \ref{circle maps}. Using those results requires choosing values of $N$ and $K$ such that $N>K$. It turns out that to minimize the upper bounds in results of Section \ref{circle maps} we need to choose $K=C\log N$, where $C$ is some constant. For small values of $N$ it might be that $C\log N>N$, therefore we have formulated the results in such way that they hold only for large enough $N$.

We are going to choose $K=\left\lceil \dfrac{2\log N}{-\log \vartheta}\right \rceil$ and the purpose of next lemma is to guarantee that this choice works in the proof as meant.
\begin{lem}\label{Choosing K Lemma}
Let $\vartheta \in \,]0,1[$. Then
\\
i) If $x\ge 3$, then $\left\lceil \dfrac{2\log x}{-\log \vartheta}\right \rceil +1\le  \dfrac{4\log x}{1-\vartheta}$.
\\
ii) If $x\ge 16/(1-\vartheta)^{2}$, then 
$
 x> \left\lceil \dfrac{2\log x}{-\log \vartheta}\right \rceil.
$
\end{lem}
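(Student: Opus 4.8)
The plan is to prove both inequalities by elementary real analysis, treating $\vartheta\in\,]0,1[$ as fixed and using only monotonicity arguments. The single useful auxiliary fact I would establish first is the two-sided bound on $-\log\vartheta$: writing $u=1-\vartheta\in\,]0,1[$, one has $-\log\vartheta=-\log(1-u)\ge u=1-\vartheta$, which is the standard inequality $\log(1+x)\le x$ applied at $x=-u$. I may also want $-\log\vartheta\le u/(1-u)$ but the lower bound $-\log\vartheta\ge 1-\vartheta$ is really the only thing both parts need. From it, $\dfrac{1}{-\log\vartheta}\le\dfrac{1}{1-\vartheta}$.

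For part (i): drop the ceiling using $\lceil y\rceil\le y+1$, so $\left\lceil\frac{2\log x}{-\log\vartheta}\right\rceil+1\le\frac{2\log x}{-\log\vartheta}+2\le\frac{2\log x}{1-\vartheta}+2$. It then suffices to show $\frac{2\log x}{1-\vartheta}+2\le\frac{4\log x}{1-\vartheta}$, i.e. $2\le\frac{2\log x}{1-\vartheta}$, i.e. $\log x\ge 1-\vartheta$. Since $x\ge3$ gives $\log x\ge\log 3>1>1-\vartheta$, this holds. (Here $\log$ is the natural logarithm; if the paper's $\log$ were base-2 the estimate $\log_2 3>1$ still works, so no issue arises.)

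For part (ii): again $\left\lceil\frac{2\log x}{-\log\vartheta}\right\rceil\le\frac{2\log x}{-\log\vartheta}+1\le\frac{2\log x}{1-\vartheta}+1$, so it is enough to show $\frac{2\log x}{1-\vartheta}+1<x$ for $x\ge 16/(1-\vartheta)^2$. Set $s=1-\vartheta$. I would show the function $g(x)=x-\frac{2\log x}{s}-1$ is increasing for $x$ large (its derivative $1-\frac{2}{sx}$ is positive once $x>2/s$, which is implied by $x\ge16/s^2$ since $16/s^2>2/s\iff 8>s$, true), and then check $g(16/s^2)>0$. Evaluating, $g(16/s^2)=\frac{16}{s^2}-\frac{2}{s}\log\!\frac{16}{s^2}-1=\frac{16}{s^2}-\frac{4}{s}\log\!\frac{4}{s}-1$. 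Writing $r=4/s\ge4$ (since $s<1$), this is $r^2-4r\log r-1$, and I would verify $r^2-4r\log r-1>0$ for all $r\ge4$: at $r=4$, $16-16\log 4-1=15-16\log4<0$ with natural log — so here I must be careful about the logarithm base. With $\log=\ln$, $16\ln4\approx22.2$, giving a negative value, so the stated bound $16/(1-\vartheta)^2$ would be too weak for natural log; this signals that the paper intends a cruder route or that the estimate should be done without passing through $-\log\vartheta\ge 1-\vartheta$ so wastefully. The honest fix is to keep $-\log\vartheta$ intact longer: it suffices that $x>\frac{2\log x}{-\log\vartheta}+1$, and since for $x\ge 16/(1-\vartheta)^2\ge 16$ we have $\sqrt x\ge\frac{4}{1-\vartheta}\ge\frac{4}{-\log\vartheta}\cdot(-\log\vartheta)/(1-\vartheta)$ — no, cleaner: use $\log x\le\sqrt x$ for $x\ge1$, so $\frac{2\log x}{-\log\vartheta}+1\le\frac{2\sqrt x}{1-\vartheta}+1$, and with $\sqrt x\ge\frac{4}{1-\vartheta}$ one gets $\frac{2\sqrt x}{1-\vartheta}\le\frac{2\sqrt x}{1-\vartheta}\cdot\frac{\sqrt x\,(1-\vartheta)}{4}=\frac{\sqrt x\cdot\sqrt x}{2}=\frac x2$, hence $\frac{2\log x}{-\log\vartheta}+1\le\frac x2+1<x$ whenever $x>2$.

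The main obstacle is exactly this logarithm bookkeeping in part (ii): a naive chain of inequalities loses too much and fails at the threshold $16/(1-\vartheta)^2$, so one must route the estimate through $\log x\le\sqrt x$ together with $\sqrt x\ge 4/(1-\vartheta)$ (equivalently $x\ge16/(1-\vartheta)^2$) to land $\frac{2\log x}{-\log\vartheta}+1\le\frac x2+1<x$. Once that routing is chosen the computation is a two-line estimate. Part (i) presents no real difficulty and needs only $\log 3>1-\vartheta$.
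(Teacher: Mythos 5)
Your final argument is correct, and part (ii) takes a genuinely different route from the paper. For (i) both proofs ultimately rest on $-\log\vartheta\ge 1-\vartheta$ and $\log x\ge 1$; you drop the ceiling via $\lceil y\rceil\le y+1$ and absorb the additive $2$ using $\log 3>1-\vartheta$, whereas the paper first proves the auxiliary fact $\lceil 2a/b\rceil\le 3a/\min\{1,b\}$ and absorbs the $+1$ into a $4a$ numerator — same content, different bookkeeping. For (ii) the paper evaluates $\tfrac{4\log x}{1-\vartheta}-1$ at the threshold $x_0=16/(1-\vartheta)^2$ using $2\log y<y$ and then propagates to all $x\ge x_0$ by bounding the derivative $4/(x(1-\vartheta))$ by $1$; your route via $\log x\le\sqrt x$ combined with $\sqrt x\ge 4/(1-\vartheta)$ to get $\tfrac{2\log x}{-\log\vartheta}+1\le\tfrac x2+1\le x$ is a cleaner, monotonicity-free two-liner that uses the same underlying inequality ($2\log u<u$ at $u=\sqrt x$) but avoids the threshold-plus-derivative argument entirely. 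Both are valid.

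One correction to your exploratory discussion: your claim that the ``naive'' chain fails at the threshold rests on an arithmetic slip. You have $\tfrac{4}{s}\log\tfrac{4}{s}=r\log r$ with $r=4/s$, not $4r\log r$, so $g(16/s^2)=r^2-r\log r-1$, which at $r=4$ equals $16-4\ln 4-1\approx 9.45>0$. That route (which is essentially the paper's) therefore also succeeds at $x_0=16/(1-\vartheta)^2$; the threshold is not too weak for the natural logarithm. Since you discard that attempt and your final estimate stands on its own, this does not affect the correctness of your proof, but the parenthetical diagnosis of the paper's intent is wrong and should be deleted.
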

\begin{proof}
i). First we introduce a following fact: If $a\ge 1$ and $b> 0$, then
\beq
\left\lceil \frac{2a}{b}\right\rceil \le \frac{3a}{\min\{1,b\}}. \label{eq:ceil inequality}
\eeq
This can be seen by studying the cases $b\le 1$ and $b> 1$ separately.
Thus it holds that 
\beqn
\left\lceil \frac{2\log x}{-\log \vartheta}\right \rceil +1 \le \frac{3\log x}{\min\{1,-\log\vartheta\}}+1\le  \frac{4\log x}{\min\{1,-\log\vartheta\}} \le \frac{4\log x}{1-\vartheta},
\eeqn
which completes the proof of i).

ii). 
Let $x_{0}= 16/(1-\vartheta)^{2}$. Then by i) 
\beqn
\left\lceil \dfrac{2\log x_{0}}{-\log \vartheta}\right \rceil \le  \dfrac{4\log x_0}{1-\vartheta}-1.
\eeqn
Since for all $y>0$ it holds that $\log y^{2}=2\log y <y$, we have
\beqn
\dfrac{4\log x_{0}}{1-\vartheta}-1= \dfrac{4\log \left(\left(\frac{4}{(1-\vartheta)}\right)^{2}\right)}{1-\vartheta}-1 \le \left(\dfrac{4}{1-\vartheta}\right)^{2}-1<x_{0}.
\eeqn
Let $x\ge x_{0}$. The derivative of $ 4\log x/(1-\vartheta)$ with respect to $x$ is $4/x(1-\vartheta)$, which is at most $(1-\vartheta)/4<1$, when $x\ge 16/(1-\vartheta)^{2}$. Thus
\beqn
\left\lceil \frac{2\log x}{-\log \vartheta}\right \rceil \le
 \dfrac{4\log x}{1-\vartheta}-1 < x_{0}+ \int_{x_{0}}^{x}\dfrac{4}{t(1-\vartheta)}dt \le  x_{0}+ \int_{x_{0}}^{x}\dfrac{1-\vartheta}{4} dt \le x_{0}+(x-x_{0})=x,
\eeqn
for every $x\ge 16/(1-\vartheta)^{2}$.
\end{proof}

\subsubsection{Proof of Theorem \ref{multivariate circle theorem}}
\begin{proof}
The proof of Theorem \ref{multivariate circle theorem} is based on applying Theorem \ref{thm:main} to the model introduced in Section \ref{circle maps}. First we verify that the assumptions of Theorem \ref{thm:main} are satisfied. 

Clearly the transformations $T_{i}$, and the functions $f$ and $h$ in Theorem \ref{multivariate circle theorem} are such that the corresponding assumptions in Theorem \ref{thm:main} hold. Assumption (A3) is also explicitly stated in Theorem \ref{multivariate circle theorem}.

Let then
\beqn
 N\ge \frac{16}{(1-\vartheta)^{2}}  \qquad \text{and}\qquad K=\left\lceil \dfrac{2\log N}{-\log \vartheta}\right \rceil
\eeqn
be fixed. By Lemma \ref{Choosing K Lemma}.ii), we have $K<N$.  We choose the functions $\rho(K)$ and $\tilde{\rho}(K)$ to be as in Lemma \ref{rho lem multivariate} and \eqref{multivar rho tilde}, respectively. As was proven in the previous section, with those choices, the Assumptions (A1) and (A2) hold. 

It is also crucial to notice that the constants $C_{2}, C_{4}$ and $B_{0}$ in those definitions do not depend on $N$ or $K$. Therefore in the forthcoming computation, every dependence on $N$ and $K$ is explicit. 

We have thus checked that the Theorem \ref{thm:main} is applicable under the setting described in Theorem \ref{multivariate circle theorem} with the choices described above. It yields
\begin{align}
&|\mu(h(W)) - \Phi_{\Sigma_{N}}(h)|\nonumber
\\
\le
&\,6d^3\max\{C_2,\sqrt{C_4}\}\left(\Vert f \Vert_{\infty} \Vert D^3 h \Vert_{\infty}+ \Vert D^2 h\Vert_{\infty}\right)\sqrt{\sum_{i=0}^{\infty} (i+1)\rho(i)}\left(\frac{K+1}{\sqrt{N}} + \sum_{i=K+1}^{\infty}\rho(i) \right)\nonumber
\\
&+ \left(2d^{2} \|D^{2}h\|_{\infty}\dfrac{\| f\|_{\operatorname{Lip}}^{2}\vartheta^{ \frac{K-1}{2}}}{\vartheta^{-1} -1}
+
4d B_{0} \| f\|_{\operatorname{Lip}}\|Dh\|_{\infty} \vartheta^{\frac{K}{2}}
+
2d\Vert Dh \Vert_{\infty}\| f\|_{\operatorname{Lip}}\vartheta^{\frac{K-1}{2}}\right) 
\sqrt{N}.
\label{eq:too long eq1}
\end{align}
Since $\rho(i)=\vartheta^{i}$ we have 
\beq
\sum_{i=K+1}^{\infty}\rho(i)= \sum_{i=K+1}^{\infty}\vartheta^{i}=\dfrac{\vartheta^{K+1}}{1-\vartheta}\label{eq:rho sum1} 
\eeq
and, by some calculations omitted here,
\beq
\sqrt{\sum_{i=0}^{\infty} (i+1)\rho(i)}\le \frac{1}{1-\vartheta}. \label{eq:rho sum2}
\eeq
Thus by \eqref{eq:rho sum1} and \eqref{eq:rho sum2}: 
\begin{align}
\begin{split}\eqref{eq:too long eq1}  \le &\,6d^3\max\{C_2,\sqrt{C_4}\}\left(\Vert f \Vert_{\infty} \Vert D^3 h \Vert_{\infty}+ \Vert D^2 h\Vert_{\infty}\right)\frac{1}{1-\vartheta}\left(\frac{K+1}{\sqrt{N}} + \dfrac{\vartheta^{K+1}}{1-\vartheta} \right)
\\
&+ \left(2d^{2} \|D^{2}h\|_{\infty}\dfrac{\| f\|_{\operatorname{Lip}}^{2}\vartheta^{ \frac{K-1}{2}}}{\vartheta^{-1} -1}
+
4d B_{0} \| f\|_{\operatorname{Lip}}\|Dh\|_{\infty} \vartheta^{\frac{K}{2}}
+
2d\Vert Dh \Vert_{\infty}\| f\|_{\operatorname{Lip}}\vartheta^{\frac{K-1}{2}}\right) 
\sqrt{N}.
\label{eq:too long eq 2}
\end{split}
\end{align}

We now make the substitution $K=\left\lceil \dfrac{2\log N}{-\log \vartheta}\right \rceil$ to \eqref{eq:too long eq 2}. Then $\vartheta^{K}\le \vartheta^{\frac{2\log N}{-\log \vartheta}} = N^{-2}$, and by Lemma \ref{Choosing K Lemma}.i)
\beqn
K+1\le \frac{4\log N}{1-\vartheta}.
\eeqn
Thus
\begin{align*}
\eqref{eq:too long eq 2}\le \, &6d^3\max\{C_2,\sqrt{C_4}\}\left(\Vert f \Vert_{\infty} \Vert D^3 h \Vert_{\infty}+ \Vert D^2 h\Vert_{\infty}\right)\frac{1}{1-\vartheta}\left(\frac{4\log N}{(1-\vartheta)\sqrt{N}} + \dfrac{N^{-2}}{(1-\vartheta)} \right)
\\
&+ \left(2d^{2} \|D^{2}h\|_{\infty}\dfrac{\| f\|_{\operatorname{Lip}}^{2}N^{-1}}{\vartheta^{-\frac{1}{2}} -\vartheta^{\frac{1}{2}}}
+
4d B_{0} \| f\|_{\operatorname{Lip}}\|Dh\|_{\infty} N^{-1}
+
\frac{2d\Vert Dh \Vert_{\infty}\| f\|_{\operatorname{Lip}}N^{-1}}{\vartheta^{\frac{1}{2}}}\right)\sqrt{N}.
\\
\begin{split}
\le &\,\Bigg(\frac{6d^3\max\{C_2,\sqrt{C_4}\}\left(\Vert f \Vert_{\infty} \Vert D^3 h \Vert_{\infty}+ \Vert D^2 h\Vert_{\infty}\right)}{{1-\vartheta}}\left(\frac{4\log N}{1-\vartheta} + \dfrac{1}{1-\vartheta}\right) 
\\
&+ \left(2d^{2} \|D^{2}h\|_{\infty}\dfrac{\| f\|_{\operatorname{Lip}}^{2}}{\vartheta^{-\frac{1}{2}} -\vartheta^{\frac{1}{2}}}
+
4d B_{0} \| f\|_{\operatorname{Lip}}\|Dh\|_{\infty}
+
\frac{2d\Vert Dh \Vert_{\infty}\| f\|_{\operatorname{Lip}}}{\vartheta^{\frac{1}{2}}}\right)
\Bigg)N^{-\frac{1}{2}}.
\end{split}
\end{align*}
Since we assumed $N\ge \dfrac{16}{(1-\vartheta)^{2}} \ge e$, we have $\log N\ge 1$ which finally yields
\begin{align}
&|\mu(h(W)) - \Phi_{\Sigma_{N}}(h)|\nonumber
\\
\begin{split}
\le &\,\Bigg(\frac{30d^3\max\{C_2,\sqrt{C_4}\}\left(\Vert f \Vert_{\infty} \Vert D^3 h \Vert_{\infty}+ \Vert D^2 h\Vert_{\infty}\right)}{(1-\vartheta)^{2}}
\\
&+ 2d^{2} \|D^{2}h\|_{\infty}\dfrac{\| f\|_{\operatorname{Lip}}^{2}}{\vartheta^{-\frac{1}{2}} -\vartheta^{\frac{1}{2}}}
+
4d B_{0} \| f\|_{\operatorname{Lip}}\|Dh\|_{\infty} \label{multivar circle constant}
+
\frac{2d\Vert Dh \Vert_{\infty}\| f\|_{\operatorname{Lip}}}{\vartheta^{\frac{1}{2}}}
\Bigg)N^{-\frac{1}{2}}\log N.
\end{split}
\end{align}
Since \eqref{multivar circle constant} holds for all $N\ge \dfrac{16}{(1-\vartheta)^{2}} $, we have now completed the proof of Theorem \ref{multivariate circle theorem}.
\end{proof}

\subsubsection{Proofs of Theorem \ref{circle main thm} and Corollary \ref{N^1/6 convergence cor}}

The proof of Theorem \ref{circle main thm} proceeds similarly to the proof of Theorem \ref{multivariate circle theorem}. As in the previous proof let 
\beqn
 N\ge \frac{16}{(1-\vartheta)^{2}} \qquad \text{and}\qquad K=\left\lceil \dfrac{2\log N}{-\log \vartheta}\right \rceil
\eeqn
be fixed,  and let $p\ge 0$ and $C_{0}>0$ be such that $\sigma_{N}\ge C_{0}N^{-p}$.  Define functions $\rho(K)$ and $\tilde{\rho}(K)$ as in Lemma \ref{rho lem multivariate} and \eqref{univar rho tilde}, respectively. The assumptions of Theorem \ref{main thm 1d} are now satisfied as the reader may verify. By reusing the results in the proof of Theorem \ref{multivariate circle theorem}, it yields

\begin{align*}
&d_\mathscr{W}(W,\sigma_{N}Z)
\\
\le\,&
 12 \max\{\sigma_{N}^{-1},\sigma_{N}^{-2}\} \max\{C_2,\sqrt{C_4}\} (1 + \Vert f \Vert_{\infty}) \sqrt{\sum_{i = 0}^{\infty}(i+1) \rho(i)}\!\left(\frac{K+1}{\sqrt{N}} + \sum_{i= K+1}^{\infty}  \rho(i) \right)
\\
&+ 2\max\{1,\sigma_{N}^{-2}\}\sqrt N\tilde\rho(K)
\\
\le\,&
 \frac{12 \max\{\sigma_{N}^{-1},\sigma_{N}^{-2}\} \max\{C_2,\sqrt{C_4}\} (1 + \Vert f \Vert_{\infty}) }{1-\vartheta}\left(\frac{4\log N}{(1-\vartheta)\sqrt{N}} + \dfrac{N^{-2}}{1-\vartheta} \right)
\\
&+ 2\max\{1,\sigma_{N}^{-2}\}\sqrt N \left(2\dfrac{\| f\|_{\operatorname{Lip}}^{2}\vartheta^{ \frac{K-1}{2}}}{\vartheta^{-1} -1}
+
 4B_{0} \| f\|_{\operatorname{Lip}} \vartheta^{\frac{K}{2}}
+
2\| f\|_{\operatorname{Lip}}\vartheta^{\frac{K-1}{2}}\right)
\\
\le\,&
  \frac{60\max\{C_2,\sqrt{C_4}\} (1 + \Vert f \Vert_{\infty}) }{(1-\vartheta)^{2}} \max\{\sigma_{N}^{-1},\sigma_{N}^{-2}\}N^{-\frac{1}{2}} \log N 
\\
&+ \left(4\dfrac{\| f\|_{\operatorname{Lip}}^{2}}{\vartheta^{-\frac{1}{2}} -\vartheta^{\frac{1}{2}}}
+
8 B_{0} \| f\|_{\operatorname{Lip}}
+
\frac{4\| f\|_{\operatorname{Lip}}}{\vartheta^{\frac{1}{2}}}\right) \max\{1,\sigma_{N}^{-2}\}N^{-\frac{1}{2}}.
\end{align*} 
We have $\max\{\sigma_{N}^{-1},\sigma_{N}^{-2}\}\le\max\{1,\sigma_{N}^{-2}\}$ and thus
\beq
d_\mathscr{W}(W,\sigma_{N}Z)
\le
 \tilde{C}\max\{1,\sigma_{N}^{-2}\}N^{-\frac{1}{2}}\log N,\label{univ formula}
\eeq
where
\beqn
\tilde{C}=\frac{60\max\{C_2,\sqrt{C_4}\} (1 + \Vert f \Vert_{\infty}) }{(1-\vartheta)^{2}}+\dfrac{4\| f\|_{\operatorname{Lip}}^{2}}{\vartheta^{-\frac{1}{2}} -\vartheta^{\frac{1}{2}}}
+
8 B_{0} \| f\|_{\operatorname{Lip}}
+
\frac{4\| f\|_{\operatorname{Lip}}}{\vartheta^{\frac{1}{2}}}.
\eeqn

Theorem \ref{circle main thm} now follows: Since it was assumed that $\sigma_{N}\ge C_{0}N^{-p}$ it holds that $\max\{1,\sigma_{N}^{-2}\}\le \max\{1,C_{0}^{-2}\}N^{2p} $ and by \eqref{univ formula} we have 
\beqn
d_\mathscr{W}(W,\sigma_{N}Z)\le \tilde{C}\max\{1,C_{0}^{-2}\}N^{-\frac{1}{2}+2p}\log N. \eeqn
Furthermore, notice that $\tilde{C}$ does not depend on $N$.

The idea behind the proof of Corollary \ref{N^1/6 convergence cor} is as follows:

 If the variance of $W$ is large, then Theorem \ref{circle main thm} gives good upper bound to Wasserstein distance $d_{\mathscr{W}}(W,\sigma_{N}Z)$. On the other hand if $\sigma_{N}$ is close to zero, then both the distribution of $W$ and $\sigma_{N}Z$ are close to the Dirac delta distribution $\delta_{0}$ in the sense of Wasserstein distance. This gives us two distinct ways to find upper bound to $d_{\mathscr{W}}(W,\sigma_{N}Z)$. It turns out that the worst-case scenario happens when variance behaves like $CN^{-\frac{1}{6}}$. 

To handle the small values of $\sigma_{N}$, we introduce the following fact:

Let $X$ and $Y$ be two random variables with means $0$ and variances $\sigma^{2}_{X},\sigma^{2}_{Y}$, respectively. Then 
\beq
d_{\mathscr{W}}(X,Y)\le \sigma_{X}+\sigma_{Y}.\label{upper bound by variance}
\eeq
To see this, assume that $X_{0}$ is a random variable such that $P(X_{0}=0)=1$. Then
\begin{align*}
d_{\mathscr{W}}(X_{0},X)
&= \sup_{h\in \mathscr{W}}\left| \int h(x)dF_{X_{0}}(x)- \int h(x)dF_{X}(x)\right|
\\
&=\sup_{h\in \mathscr{W}}\left| h(0)-\bE [h(X)]\right|
\leq \bE[\left| X\right|] 
\leq \sqrt{\bE[ X^{2}]}
=\sigma_{X}.
\end{align*}
Now \eqref{upper bound by variance} follows from triangle inequality, since $d_{\mathscr{W}}$ is a metric. 

Assume that $\sigma_{N}\ge CN^{-p}$, where $p$ and $C$ are some constants. Then the Wasserstein distance $d_{\mathscr{W}}(W,Z)$ has an upper bound of type $CN^{-\frac{1}{2}+2p}\log N$ by Theorem \ref{circle main thm}. On the contrary, if $\sigma_{N}< CN^{-p}$ then  formula \eqref{upper bound by variance} gives an upper bound of type $CN^{-p}$. Since the equation $-\frac{1}{2}+2p= -p$ is solved by $p=\frac{1}{6}$, we make the following choices: 
 
Let $p=\frac{1}{6}$.
Choose $C_{0}=1$ in Theorem \ref{circle main thm}. Then
\begin{align}
\begin{split}
d_{\mathscr{W}}(W,\sigma_{N}Z)
\le 
\tilde{C}N^{-\frac{1}{6}}\log N, \label{N^1/6 first bound}
\end{split}
\end{align}
when $\sigma_{N}\ge N^{-\frac{1}{6}}$. If $\sigma_{N}< N^{-\frac{1}{6}}$, then by \eqref{upper bound by variance}
\begin{align}
d_{\mathscr{W}}(W,\sigma_{N}Z)
\le 2N^{-\frac{1}{6}}\le 2N^{-\frac{1}{6}}\log N.\label{N^1/6 second bound}
\end{align}
Since either \eqref{N^1/6 first bound} or \eqref{N^1/6 second bound} holds, we have
\beq
d_{\mathscr{W}}(W,\sigma_{N}Z)\le    \max\left\{\tilde{C}, 2 \right\}N^{-\frac{1}{6}}\log N.\label{N^1/6 constant}
\eeq
This proves Corollary \ref{N^1/6 convergence cor}.
\subsubsection{Proof of Corollary \ref{Self-normalized cor}}

Let $C_{0}'>0$, $r\ge 0$ and $s_{N}^{2}= N\sigma^{2}_{N}>C_{0}'N^{r}$, which implies that $\sigma_{N}=\dfrac{s_{N}}{\sqrt{N}}>\sqrt{C_{0}'}N^{\frac{r-1}{2}}$. Then using  properties of Wasserstein distance gives
\beqn
 d_{\mathscr{W}}\left(\frac{S_{N}}{s_{N}},Z\right) =  d_{\mathscr{W}}\left(\frac{W}{\sigma_{N}},Z\right) 
= \sigma_{N}^{-1} d_{\mathscr{W}}\left(W,\sigma_{N}Z\right) \le C_{0}'^{-\frac12}N^{\frac{1-r}{2}} d_{\mathscr{W}}\left(W,\sigma_{N}Z\right).
\eeqn
 We may now apply Theorem \ref{circle main thm} to $d_{\mathscr{W}}\left(W,\sigma_{N}Z\right)$, with values $p=(1-r)/2$ and $C_{0}=\sqrt{C_{0}'}$ which yields
\begin{align*}
 d_{\mathscr{W}}\left(\frac{S_{N}}{s_{N}},Z\right) &\le C_{0}'^{-\frac12}N^{\frac{1-r}{2}} d_{\mathscr{W}}\left(W,\sigma_{N}Z\right)
\\
&\le C_{0}'^{-\frac12}N^{\frac{1-r}{2}}
(\tilde{C}\max\{1,C_{0}'^{-1}\}N^{-\frac{1}{2}+1-r} \log N)
\\
&\le \tilde{C}\max\{C_{0}'^{-\frac{1}{2}},C_{0}'^{-\frac{3}{2}}\}N^{1-\frac{3r}{2}} \log N.
\end{align*}

This completes the proof of Corollary \ref{Self-normalized cor}.

\section{Proofs for Application II}\label{Proofs of quasistatic model results}
In this section we use the notation defined in Section \ref{quasistatic model}. The reader should recall the definitions of $\cT_{n,i}, \cT_{n,i,j}$ and $f_{n,i}$ from that section to avoid confusion with the notations used on Sections \ref{circle maps} and \ref{computing rho}. The pushforward measure $(\cT_{n,k})_{*}\mu$ is denoted by $\mu_{n,k}$ and the corresponding density by $\varrho_{n,k}$.

The density $\hat{\varrho}_{t}$ of the SRB measure $\hat{\mu}_{t}$ is Lipschitz continuous by Remark 4.1 of \cite{DobbsStenlund_2016}. By the same remark $\cL_{t}^{k}1$ converges to $\hat{\varrho}_{t}$ in the supremum norm and thus $\hat{\varrho}_{t}>0$. Furthermore by Remark 4.4.(iii) of \cite{DobbsStenlund_2016} $\hat{\varrho}_{t}\in \cD_{L}$ for some $L\ge 0$. Since $\cL_{t}^{k}\hat{\varrho}_{t}= \hat{\varrho}_{t}$ for all $k\ge 0$, by \eqref{L* property}, we have $\hat{\varrho}_{t}\in \cD_{L^{*}}$. 
\subsection{Preliminary results}\label{Hölder continuity of variance}

By duality \eqref{transfer op rule}:
\beq
\hat{\sigma}^{2}_{s}(f)=\hat{\mu}_{s}[\hat{f}_{s}^{2}]+ 2\sum_{k=1}^{\infty}m[\hat{f}_{s}\cL_{s}^{k}(\hat{\varrho}_{s}\hat{f}_{s})]=
\hat{\mu}_{s}[f^{2}] - \hat{\mu}_{s}[f]^{2} + 2\sum_{k=1}^{\infty}\left\{\hat{\mu}_{s}[f f\circ \gamma_{s}^{k}] - \hat{\mu}_{s}[f]^{2}\right\}. \label{variance as a sum} \eeq
For later use, notice that $\hat{\sigma}^{2}_{s}(f)$ can be represented in the integral form
\beq
n\int_{-\infty}^{\infty}\hat{\mu}_{s}(f f\circ\gamma_{s}^{|\lfloor ns\rfloor -\lfloor n(s+r) \rfloor|})-\hat{\mu}_{s}(f)^{2} dr,\label{integral variance}
\eeq
where $n=1,2,...$

In Lemma 6.1 of \cite{DobbsStenlund_2016} it is proven that $t\rightarrow \hat{\sigma}_{t}^{2}(f)$ is uniformly continuous. We improve the proof to show that it is even Hölder continuous.
\begin{lem}
$t\rightarrow \hat{\sigma}_{t}^{2}(f)$ is Hölder continuous with every exponent $\eta''<\eta$. An upper bound for the corresponding Hölder constant $C$ can be given as a function of $\lambda, A_{*},\eta,\eta'', C_{H}$ and $\|f\|_{\operatorname{Lip}}$. \label{Hölder continuity lemma}
\end{lem}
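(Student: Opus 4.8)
The plan is to bound $|\hat\sigma_t^2(f) - \hat\sigma_s^2(f)|$ for $s < t$ by using the series representation \eqref{variance as a sum}, splitting the sum at a cutoff $N = N(|t-s|)$ chosen at the end, and estimating the head ($k \le N$) and tail ($k > N$) separately. For the tail, the exponential decay of correlations (Lemma \ref{splitting lemma} applied with the invariant densities $\hat\varrho_s, \hat\varrho_t \in \cD_{L^*}$, and the fact that $\cL_s^k \hat\varrho_s = \hat\varrho_s$) gives $|\hat\mu_s[f\, f\circ\gamma_s^k] - \hat\mu_s[f]^2| \le C\vartheta^{\lceil k/2\rceil}$ (and similarly for $t$), so $\sum_{k>N}(\cdots) \le C' \vartheta^{N/2}/(1-\sqrt\vartheta)$ uniformly in $s,t$. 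For the head, I need to compare each term $\hat\mu_s[f\, f\circ\gamma_s^k] - \hat\mu_s[f]^2$ with $\hat\mu_t[f\, f\circ\gamma_t^k] - \hat\mu_t[f]^2$ for fixed $k \le N$; the key quantitative input is that $\gamma$ is Hölder with exponent $\eta$ and that $\hat\mu_s \to \hat\mu_t$ (equivalently $\hat\varrho_s \to \hat\varrho_t$) at a rate controlled by $|t-s|^\eta$, but with a $k$-dependent amplification because composing $k$ maps that differ by $O(|t-s|^\eta)$ accumulates error.

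Concretely, for the head I would establish two estimates. First, $\|\hat\varrho_s - \hat\varrho_t\|_{L^1} \le C|t-s|^{\eta}$ (or a suitable power): since $\hat\varrho_s = \cL_s^m \hat\varrho_s$ for all $m$ and $\hat\varrho_t = \cL_t^m \hat\varrho_t$, write $\hat\varrho_s - \hat\varrho_t = (\cL_s^m - \cL_t^m)\hat\varrho_s + \cL_t^m(\hat\varrho_s - \hat\varrho_t)$; the second term contracts like $\vartheta^m$ by Lemma \ref{pw lemma 1}, and the first is a telescoping sum of $m$ terms each of the form $\cL_s^j(\cL_s - \cL_t)\cL_t^{m-1-j}\hat\varrho_s$, where $\|(\cL_s - \cL_t)g\|_{L^1}$ is controlled by $d_{C^1}(\gamma_s,\gamma_t) \le C_H|t-s|^\eta$ times a regularity norm of $g$ (this is essentially the perturbation estimate for transfer operators of expanding maps, available from Section 5 of \cite{DobbsStenlund_2016}); optimizing $m \sim \log(1/|t-s|)$ gives a bound $C|t-s|^{\eta''}$ for any $\eta'' < \eta$. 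Second, for fixed $k$, $|\hat\mu_s(f\, f\circ\gamma_s^k) - \hat\mu_t(f\, f\circ\gamma_t^k)|$: bound $\gamma_s^k$ versus $\gamma_t^k$ pointwise by $\sum_{j<k}\lambda^{j}\, d_{C^1}(\gamma_s,\gamma_t) \le C\lambda^k |t-s|^\eta$ (distortion of $k$ compositions), use Lipschitz continuity of $f$, and combine with the $L^1$-closeness of the two invariant densities — this produces a bound of the form $C(\lambda^k + 1)|t-s|^{\eta}$ per term. Summing over $k \le N$ gives roughly $C\lambda^{N}|t-s|^\eta$ for the head.

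Thus the head is $\lesssim \lambda^{N}|t-s|^\eta$ and the tail is $\lesssim \vartheta^{N/2} \le \lambda^{-N/2}$ (using $\vartheta \ge \lambda^{-1}$, so actually I should be careful and keep $\vartheta^{N/2}$); balancing $\lambda^N|t-s|^\eta \asymp \vartheta^{N/2}$ by choosing $N \asymp \log(1/|t-s|)$ yields a bound $C|t-s|^{\eta''}$ for any $\eta'' < \eta$, at the cost of losing an arbitrarily small amount in the exponent (because of the logarithmic cutoff and the $\lambda^N$ versus $\vartheta^{N/2}$ mismatch, one cannot quite reach $\eta$). Tracking all constants shows $C$ depends only on $\lambda, A_*, \eta, \eta'', C_H, \|f\|_{\operatorname{Lip}}$, as claimed. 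The main obstacle is the second head estimate: controlling how the invariant density and the $k$-fold composition $\gamma_s^k$ vary with the parameter $s$ in a way that is explicit in both $k$ and $|t-s|$; the transfer-operator perturbation bound $\|(\cL_s - \cL_t)g\|$ in terms of $d_{C^1}(\gamma_s,\gamma_t)$ and a regularity norm of $g$, together with the expansion-driven $\lambda^k$ growth of the composition error, is where the real work lies, but all the needed ingredients are available from Section 5 of \cite{DobbsStenlund_2016} and from Lemmas \ref{pw lemma 1}--\ref{splitting lemma}.
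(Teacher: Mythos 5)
Your overall architecture --- the series representation \eqref{variance as a sum}, a head/tail split at a cutoff depending on $|t-s|$, exponential decay for the tail, and a balancing of the two contributions --- is exactly the paper's strategy, and your tail estimate and your telescoping argument for $\|\hat{\varrho}_s-\hat{\varrho}_t\|_{L^1}$ are sound. The genuine gap is in your second head estimate. You compare $\gamma_s^k$ and $\gamma_t^k$ pointwise, which accumulates error like $\lambda^k d_{C^1}(\gamma_s,\gamma_t)$; expansion amplifies orbit differences exponentially, so each head term costs $C\lambda^k|t-s|^\eta$ and the head sums to $C\lambda^N|t-s|^\eta$. Balancing $\lambda^N|t-s|^\eta$ against the tail $\vartheta^{N/2}$ then requires simultaneously $N\le (\eta-\eta'')\log(1/|t-s|)/\log\lambda$ and $N\ge 2\eta''\log(1/|t-s|)/\log(1/\vartheta)$, which forces
\beqn
\eta''\le \frac{\eta\,\log(1/\vartheta)}{2\log\lambda+\log(1/\vartheta)}\le \frac{\eta}{3},
\eeqn
using $\vartheta\ge\lambda^{-1}$. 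So the loss in the exponent is not ``arbitrarily small'' as you assert: your argument proves Hölder continuity only up to a fixed fraction of $\eta$, not for every $\eta''<\eta$ as the lemma claims.

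The fix is to never compare iterated maps pointwise, but to telescope at the level of transfer operators --- the same idea you already use for the densities. Writing $m(f\cL_t^k(\hat{\varrho}_t f))-m(f\cL_s^k(\hat{\varrho}_s f))$ and using $\cL_t^k-\cL_s^k=\sum_{j=0}^{k-1}\cL_t^{\,j}(\cL_t-\cL_s)\cL_s^{\,k-1-j}$, together with the uniform boundedness of the operators on the relevant function classes, gives the perturbation bound $\|\cL_t^k-\cL_s^k\|_{\operatorname{Lip}\to C^0}\le Ck\,d_{C^1}(\gamma_t,\gamma_s)$ --- \emph{linear} in $k$, not exponential. This is precisely what the paper imports from (19), (8) and (22) of \cite{DobbsStenlund_2016}: each head term is then $Ck|t-s|^{\eta'}$, the head sums to $CM^2|t-s|^{\eta'}$, and since $M^2$ is only polylogarithmic in $1/|t-s|$ after the cutoff choice, the balancing loses an arbitrarily small amount and every $\eta''<\eta$ is attained.
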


\begin{proof}
Let $k\ge 0$. We have 
$
m[\hat{f}_{t}\cL_{t}^{k}(\hat{\varrho}_{t}\hat{f}_{t})] =m[f\cL_{t}^{k}(\hat{\varrho}_{t}f)]-m[\hat{\varrho}_{t}f]^{2}.
$
The computation given in the proof of Lemma 6.1 in \cite{DobbsStenlund_2016} yields
\beqn
\left| m(f\cL_{t}^{k}(\hat{\varrho}_{t}f))-m(f\cL_{s}^{k}(\hat{\varrho}_{s}f)) \right|
\le
\|f\|_{\operatorname{Lip}}^{2}\left(\|\cL_{t}^{k}-\cL_{s}^{k}\|_{\operatorname{Lip}\rightarrow C^{0}}\|\hat{\varrho}_{t}\|_{\operatorname{Lip}}+\|\hat{\varrho}_{t}- \hat{\varrho}_{s}\|_{L^{1}(m)} \right).
\eeqn
Let $\eta'<\eta$ and $k\ge 1$. By (19), (8) and (22) of \cite{DobbsStenlund_2016} the right side can be approximated from above by
\beqn
C(kd_{C^{1}}(\gamma_{t},\gamma_{s})+|t-s|^{\eta'})\le C(k|t-s|^{\eta'}+|t-s|^{\eta'})\le Ck|t-s|^{\eta'}, \label{eq:k>0}
\eeqn
where $C=C(\lambda, A_{*},C_{H},\|f\|_{\operatorname{Lip}},\eta')$. Using the same results for $k=0$ it also follows that 
\begin{align*}
\left|\hat{\mu}_{t}(\hat{f}^{2}_{t})-\hat{\mu}_{s}(\hat{f}^{2}_{s})\right|
=\left|m(\hat{\varrho}_{t}f^{2})-m(\hat{\varrho}_{t}f)^{2}+m(\hat{\varrho}_{s}f)^{2}-m(\hat{\varrho}_{s}f^{2})\right|
\le C|t-s|^{\eta'}.\label{eq:k=0}
\end{align*}
Furthermore we have that for every $M\in \bN$
\beqn
\sum_{k=M}^{\infty}m(\hat{f}_{t}\cL_{t}^{k}(\hat{\varrho}_{t}\hat{f}_{t}))=\sum_{k=M}^{\infty}m(f\cL_{t}^{k}(\hat{\varrho}_{t}f))-m(\hat{\varrho}_{t}f)^{2}\le C\sqrt{\vartheta}^{M}\label{eq:k sum}
\eeqn
by Lemma \ref{splitting lemma}.
Combining all these observations, formula \eqref{variance as a sum} yields
\beqn
\left|\hat{\sigma}_{t}^{2}(f)-\hat{\sigma}_{s}^{2}(f)\right|\le\sum_{k=0}^{M-1}MC|t-s|^{\eta'}+C\sqrt{\vartheta}^{M}\le C(M^2|t-s|^{\eta'}+\sqrt{\vartheta}^{M}) 
\eeqn
for all $M=1,2,...$, where $C=C(\lambda, A_{*},C_{H},\|f\|_{\operatorname{Lip}},\eta')$. Choosing $C$ large enough,
\beqn
\left|\hat{\sigma}_{t}^{2}(f)-\hat{\sigma}_{s}^{2}(f)\right|\le C(M^2|t-s|^{\eta'}+\sqrt{\vartheta}^{M})
\eeqn
holds also for all real numbers $M\geq 0$. To prove Hölder-continuity, we choose $M$ depending on $|t-s|>0$ in the following way: 
\beqn
|t-s|^{\eta'}=\sqrt{\vartheta}^{M}\Rightarrow M=\frac{\log|t-s|^{\eta'}}{\log\sqrt{\vartheta}}=\frac{\eta'}{\log\sqrt{\vartheta}}\log|t-s|>0.
\eeqn
Using the well known fact that for all $x\in\,]0,1]$ and $\alpha\in \,]0,1[$ there exists a constant $C=C(\alpha)$ such that
\beqn
x|\log x| \le C x^\alpha, 
\eeqn
we deduce that
\beqn
M^2 = \frac{\eta'^{2}}{\log^{2}\sqrt{\vartheta}}\log^{2}|t-s|\le \frac{\eta'^{2}}{\log^{2}\sqrt{\vartheta}} C|t-s|^{2\alpha-2},
\eeqn
where $C=C(\lambda, A_{*},\eta', \alpha)$.
Let $0<\eta''<\eta' $.
Choose $\alpha=1-(\eta'-\eta'')/2$. Then 
\begin{align*}
\left|\hat{\sigma}_{t}^{2}(f)-\hat{\sigma}_{s}^{2}(f)\right|
&\le C(M^2|t-s|^{\eta'}+\sqrt{\vartheta}^{M}) \le C\left(\frac{\eta'^{2}}{\log^{2}\sqrt{\vartheta}} C|t-s|^{\eta'+2\alpha-2}+|t-s|^{\eta'}\right) 
\\
&\le C|t-s|^{\eta''},
\end{align*}
where, in the rightmost expression, $C=C(\lambda, A_{*},C_{H},\|f\|_{\operatorname{Lip}},\eta',\eta'')$.
   
Since $\eta''$ can be arbitrarily close to $\eta'$ and $\eta'$ arbitrary close to $\eta$ we see that $t\mapsto \hat{\sigma}_{t}^{2}(f)$ is Hölder continuous in $[0,1]$ for any Hölder exponent $\eta''< \eta$. The result now follows by choosing for example $\eta'=(\eta+\eta'')/2$.
\end{proof}

The next lemma follows from Lemma 5.9 in \cite{DobbsStenlund_2016}
\begin{lem}\label{b lemma}
There exists a constant $b>0$ such that the following holds. Given $\eta'<\eta$ there exists $C=C(\lambda,A_{*},C_{H},\eta',L_{0})$ such that
\beqn
\| \varrho_{n,\lfloor nt\rfloor} -\hat{\varrho}_{s}\|_{L^{1}}\le C(n^{-\eta'}+|t-s|^{\eta'}),
\eeqn
when $t\ge bn^{-1}\log n$.
\end{lem}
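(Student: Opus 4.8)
The plan is to deduce the estimate from Lemma~5.9 of \cite{DobbsStenlund_2016}, which governs the convergence of the pushed-forward initial density along a traversal of the curve~$\gamma$, together with the $L^{1}$-Hölder continuity of $s\mapsto\hat\varrho_{s}$. Write $m=\lfloor nt\rfloor$ and insert the reference density $\hat\varrho_{m/n}$:
\beqn
\|\varrho_{n,m}-\hat\varrho_{s}\|_{L^{1}}\le\|\varrho_{n,m}-\hat\varrho_{m/n}\|_{L^{1}}+\|\hat\varrho_{m/n}-\hat\varrho_{t}\|_{L^{1}}+\|\hat\varrho_{t}-\hat\varrho_{s}\|_{L^{1}}.
\eeqn
The second and third terms are controlled by the $L^{1}$-Hölder continuity of $s\mapsto\hat\varrho_{s}$ (valid with every exponent $\eta'<\eta$), which is contained in \cite{DobbsStenlund_2016} and was already used in the proof of Lemma~\ref{Hölder continuity lemma}: since $|m/n-t|\le n^{-1}$ and, for $s,t\in[0,1]$, $|t-s|\le1$, the second term is $\le Cn^{-\eta'}$ and the third is $\le C|t-s|^{\eta'}$, with $C=C(\lambda,A_{*},C_{H},\eta')$.

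For the first term I would invoke Lemma~5.9 of \cite{DobbsStenlund_2016}. The composition $\cT_{n,m}=T_{n,m}\circ\cdots\circ T_{n,1}$ follows the curve, with $d_{C^{1}}(T_{n,k},\gamma_{k/n})\le C_{H}n^{-\eta}$ and $d_{C^{1}}(\gamma_{k/n},\gamma_{k'/n})\le C_{H}(|k-k'|/n)^{\eta}$, while the transfer operators attached to maps of $\cM$ contract uniformly on the invariant class $\cD_{L_{*}}$ (cf.\ Lemma~\ref{pw lemma 1}). Consequently the density $\varrho_{n,m}$ of $(\cT_{n,m})_{*}\mu$, started from $\varrho\in\cD_{L_{0}}$, forgets both the initial density and the early part of the curve at an exponential rate $\vartheta^{cm}$ with $c=c(\lambda,A_{*})>0$, whereas replacing the most recent $O(\log n)$ maps by the frozen map $\gamma_{m/n}$ costs only a polynomial remainder; Lemma~5.9 of \cite{DobbsStenlund_2016} packages this as
\beqn
\|\varrho_{n,m}-\hat\varrho_{m/n}\|_{L^{1}}\le C\big(\vartheta^{cm}+n^{-\eta}(\log n)^{q}\big),\qquad C=C(\lambda,A_{*},C_{H},L_{0}),
\eeqn
for some fixed $q=q(\eta)\ge0$.

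It remains to fix~$b$. If $t\ge bn^{-1}\log n$ then $m\ge b\log n-1$, so choosing $b=b(\lambda,A_{*},\eta)$ large enough forces $\vartheta^{cm}\le Cn^{-\eta}$ for all $n\ge n_{0}(\lambda,A_{*},\eta)$; for the finitely many $n\le n_{0}$ the asserted bound holds trivially once $C$ is enlarged, since $\|\varrho_{n,m}-\hat\varrho_{s}\|_{L^{1}}\le2$ always. Because $\eta'<\eta$, both $\vartheta^{cm}$ and $n^{-\eta}(\log n)^{q}$ are $\le Cn^{-\eta'}$, and summing the three estimates gives $\|\varrho_{n,m}-\hat\varrho_{s}\|_{L^{1}}\le C(n^{-\eta'}+|t-s|^{\eta'})$ with $C=C(\lambda,A_{*},C_{H},\eta',L_{0})$. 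The only points requiring care are translating the hypotheses of Lemma~5.9 of \cite{DobbsStenlund_2016} into the present triangular-array notation and the elementary bookkeeping that turns the logarithmic memory length into the threshold $t\ge bn^{-1}\log n$ (together with the harmless trade of $\eta$ for $\eta'$ to absorb logarithmic factors); no analytic input beyond \cite{DobbsStenlund_2016} is needed.
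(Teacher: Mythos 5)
Your proposal is correct and follows exactly the route the paper intends: the paper gives no proof at all, stating only that the lemma ``follows from Lemma 5.9 in \cite{DobbsStenlund_2016}'', and your argument --- triangle inequality through $\hat\varrho_{\lfloor nt\rfloor/n}$, Lemma 5.9 for the convergence to the SRB density along the array, and the $L^{1}$-H\"older continuity of $s\mapsto\hat\varrho_{s}$ for the remaining terms --- is precisely the intended derivation. The only caveat is that you reconstruct the internal form of Lemma 5.9 rather than quoting it, but the bookkeeping (choice of $b$, absorbing logarithms by trading $\eta$ for $\eta'$) is sound.
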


Recall that the variance of $\xi_{n}(t)$ with respect to $\mu$ is denoted by $\sigma^{2}_{n,t}$. Since $\xi_{n}(t)$ is a sum of random variables with mean $0$, we have $\mu(\xi_{n}(t))=0$ and that
$
\sigma_{n,t}^{2}=\mu\left((\xi_{n}(t))^{2}\right).
$

Next we approximate 
\beqn
\left|\sigma_{n,t}^{2}-\sigma_{t}^{2} \right|.
\eeqn
The proof of the following lemma follows that of Lemma 6.2 of \cite{DobbsStenlund_2016}. We need a more explicit version in this paper.
\begin{lem}
Let $\eta''<\eta$. Then there exists a constant $C=C(\lambda,A_{*},C_{H},\eta,\eta'',L_{0}, \|f\|_{\operatorname{Lip}})$ such that
\beqn
\left|\sigma_{n,t}^{2}-\sigma_{t}^{2}\right| \le Cn^{-\eta''}
\eeqn
for every $t\in [0,1]$.

\label{variance distance lemma}
\end{lem}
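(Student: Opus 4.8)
The plan is to mirror the proof of Lemma 6.2 of \cite{DobbsStenlund_2016} while tracking all constants. Write $N_t=\lfloor nt\rfloor$. Using \eqref{xi def} and expanding the square,
\beqn
\sigma_{n,t}^{2}=\mu(\xi_{n}(t)^{2})=\frac1n\sum_{i=0}^{N_t-1}\sum_{j=0}^{N_t-1}\mu(\bar f_{n,i}\bar f_{n,j})+R_{n},
\eeqn
where $R_{n}$ gathers the terms carrying the factor $\{nt\}$. Exponential decay of pair correlations, which follows from Lemma \ref{splitting lemma} just as Lemma \ref{rho lem multivariate}, gives $|\mu(\bar f_{n,i}\bar f_{n,j})|\le C\vartheta^{|i-j|/2}$, and together with $\|\bar f_{n,i}\|_{\infty}\le 2\|f\|_{\infty}$ this yields $|R_{n}|\le Cn^{-1}$. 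After symmetrising the double sum it therefore suffices to compare
\beqn
V_{n,i}:=\mu(\bar f_{n,i}^{2})+2\sum_{j=i+1}^{N_t-1}\mu(\bar f_{n,i}\bar f_{n,j})
\eeqn
with $\hat\sigma_{i/n}^{2}(f)$, because a left Riemann-sum estimate for the Hölder-$\eta''$ function $s\mapsto\hat\sigma_{s}^{2}(f)$ (Lemma \ref{Hölder continuity lemma}) already yields $\big|\frac1n\sum_{i=0}^{N_t-1}\hat\sigma_{i/n}^{2}(f)-\int_{0}^{t}\hat\sigma_{s}^{2}(f)\,ds\big|\le Cn^{-\eta''}$.

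Fix an auxiliary exponent $\eta'$ with $\eta''<\eta'<\eta$ and a truncation level $M=M(n)\sim\log n$, large enough that $\vartheta^{M/2}\le n^{-\eta''}$. Split $0\le i\le N_t-1$ into three ranges: (a) $i<b\log n$, with $b$ from Lemma \ref{b lemma}; (b) $N_t-M\le i\le N_t-1$; and (c) $b\log n\le i<N_t-M$ (if range (c) is empty, then both $\sigma_{n,t}^{2}$ and $\sigma_{t}^{2}$ are $O(n^{-1}\log n)$ and we are done). In case (a), decay of correlations bounds $|V_{n,i}|+|\hat\sigma_{i/n}^{2}(f)|$ by a constant, so this range contributes $\le Cn^{-1}\log n$; case (b) likewise contributes $\le CMn^{-1}\le Cn^{-1}\log n$. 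For $i$ in range (c) the window $i+1,\dots,i+M$ stays inside $1,\dots,n$, so I may truncate both $V_{n,i}$ (to $j\le i+M$) and the series \eqref{variance as a sum} defining $\hat\sigma_{i/n}^{2}(f)$ (to its first $M$ correlation terms); both tails are $\le C\vartheta^{M/2}\le Cn^{-\eta''}$ by Lemma \ref{splitting lemma}. This reduces everything to showing, for $0\le k\le M$ and $s=i/n$,
\beqn
\big|\mu(\bar f_{n,i}\bar f_{n,i+k})-\big(\hat\mu_{s}(f\cdot f\circ\gamma_{s}^{k})-\hat\mu_{s}(f)^{2}\big)\big|\le Cn^{-\eta'}.
\eeqn

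For this estimate I would write, using \eqref{transfer op rule},
\beqn
\mu(\bar f_{n,i}\bar f_{n,i+k})=\int_{\bS^{1}}f\cdot(f\circ\cT_{n,i+k,i+1})\,\varrho_{n,i}\,dm-\Big(\int_{\bS^{1}}f\varrho_{n,i}\,dm\Big)\Big(\int_{\bS^{1}}(f\circ\cT_{n,i+k,i+1})\varrho_{n,i}\,dm\Big),
\eeqn
and carry out two replacements. First replace $\varrho_{n,i}$ by $\hat\varrho_{s}$: since $i\ge b\log n$ makes $s\ge bn^{-1}\log n$, Lemma \ref{b lemma} gives $\|\varrho_{n,i}-\hat\varrho_{s}\|_{L^{1}}\le Cn^{-\eta'}$, costing $\le C\|f\|_{\infty}^{2}n^{-\eta'}$. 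Second replace the genuine composition $\cT_{n,i+k,i+1}$ by the frozen iterate $\gamma_{s}^{k}$, via $\int f\cdot(f\circ\cT_{n,i+k,i+1})\hat\varrho_{s}\,dm=\int f\,\cL_{\cT_{n,i+k,i+1}}(f\hat\varrho_{s})\,dm$ and the telescoping bound $\|\cL_{\cT_{n,i+k,i+1}}-\cL_{\gamma_{s}}^{k}\|_{\operatorname{Lip}\to C^{0}}\le Ck\max_{1\le l\le k}d_{C^{1}}(T_{n,i+l},\gamma_{s})$ — the same estimate ((8), (19) of \cite{DobbsStenlund_2016}) already used in the proof of Lemma \ref{Hölder continuity lemma}. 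Since $d_{C^{1}}(T_{n,i+l},\gamma_{s})\le C_{H}n^{-\eta}+C_{H}(l/n)^{\eta}\le C(\log n)^{\eta}n^{-\eta}$ for $l\le M$, and $\hat\varrho_{s}\in\cD_{L^{*}}$ (by \eqref{L* property}) makes $\|f\hat\varrho_{s}\|_{\operatorname{Lip}}$ bounded uniformly in $s$, this replacement costs $\le C(\log n)^{1+\eta}n^{-\eta}$ per term; the product-of-means factor is handled the same way, and the $\gamma_{s}$-invariance of $\hat\mu_{s}$ turns $\int(f\circ\gamma_{s}^{k})\hat\varrho_{s}\,dm$ into $\hat\mu_{s}(f)$. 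Each of the $\le M+1$ terms thus contributes $\le C(n^{-\eta'}+(\log n)^{1+\eta}n^{-\eta})\le Cn^{-\eta'}$, so $|V_{n,i}-\hat\sigma_{i/n}^{2}(f)|\le CMn^{-\eta'}+Cn^{-\eta''}\le Cn^{-\eta''}$ (here $\eta'<\eta$ is used to absorb the $M$ and $\log$ factors). Summing over $i\le N_t$, dividing by $n$, and combining with $R_n$ and the Riemann-sum estimate proves $|\sigma_{n,t}^{2}-\sigma_{t}^{2}|\le Cn^{-\eta''}$; inspecting the estimates (and taking, say, $\eta'=(\eta+\eta'')/2$) shows the constant depends only on $\lambda,A_{*},C_{H},\eta,\eta'',L_{0}$ and $\|f\|_{\operatorname{Lip}}$.

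The main obstacle is the second replacement: one needs a quantitative comparison between the true composition $\cT_{n,i+k,i+1}$ of array maps and the iterate $\gamma_{s}^{k}$ of the frozen limit map, uniform over the $\sim\log n$-long windows, and then a careful balancing of the truncation length $M$ so that the correlation tails $\vartheta^{M/2}$, the density error $Mn^{-\eta'}$ coming from Lemma \ref{b lemma}, and the freezing error $M^{2}(\log n)^{\eta}n^{-\eta}$ are simultaneously $O(n^{-\eta''})$ — which is precisely why one works with an auxiliary exponent $\eta'$ strictly between $\eta''$ and $\eta$.
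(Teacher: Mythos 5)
Your argument is correct and follows essentially the same route as the paper's proof: restrict to a near-diagonal band, freeze the density via Lemma \ref{b lemma} and the dynamics via the telescoped transfer-operator perturbation bound, identify the frozen correlations with $\hat\sigma_{s}^{2}$, and finish with the regularity and boundedness of $s\mapsto\hat\sigma_{s}^{2}$. The only differences are technical bookkeeping: you work with the discrete double sum and a logarithmic window $M\sim\log n$, whereas the paper uses the integral representation $\sigma_{n,t}^{2}=n\int_{0}^{t}\int_{0}^{t}\mu(\bar f_{n,\lfloor ns\rfloor}\bar f_{n,\lfloor nr\rfloor})\,dr\,ds$ with a polynomial band of width $a_{n}=n^{-1+\kappa}$ (which forces its constraint $2\kappa<\eta'(1-\kappa)$ and a more involved final exponent computation); your choice of window makes that last step somewhat cleaner while yielding the same rate.
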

\begin{proof}

Let $\eta'<\eta$.
We have
\beqn
\sigma_{n,t}^{2}=\mu\left((\xi_{n}(t))^{2}\right)= n\int_{0}^{t}\int_{0}^{t}\mu(\bar{f}_{n,\lfloor ns\rfloor}\bar{f}_{n,\lfloor nr\rfloor})dr ds.
\eeqn

Let $\kappa\in \,]0,\frac{1}{4}[$ satisfy $ 2\kappa < \eta'(1-\kappa)$ and define $a_{n}=n^{-1+\kappa}$. Then $a_{n}>bn^{-1} \log n$ for big enough $n$, where $b$ is the same constant as in Lemma \ref{b lemma}. Define the sets 
\beqn
P_{n}=\lbrace (s,r)\in [0,t]^{2}: 2a_{n}\le s \le t-a_{n} \text{ and } |r-s|\le a_{n} \rbrace,
\eeqn
\beqn
Q_{n}=\lbrace (s,r)\in [0,t]^{2}: |r-s|\le a_{n} \text{ and either } s< 2a_{n} \text{ or } s > t-a_{n}   \rbrace
\eeqn
 and 
\beqn
R_{n}=\lbrace (s,r)\in [0,t]^{2}: |r-s|> a_{n} \rbrace.
\eeqn
Notice that $P_{n}\cup R_{n}\cup Q_{n}=[0,t]^{2}$.
The area of $Q_{n}$ is at most $6a_{n}^{2}$ and $|f_{n,\lfloor ns\rfloor}f_{n,\lfloor nr\rfloor}|\le \|f\|^{2}_{\infty}$. Thus
\beqn
\left| n\int\int_{Q_{n}}\mu(\bar{f}_{n,\lfloor ns\rfloor}\bar{f}_{n,\lfloor nr\rfloor})dr ds \right| \le 6\|f\|_{\infty}^{2}na_{n}^{2}=6\|f\|_{\infty}^{2}n^{-1+2\kappa}.
\eeqn
From now on $E$ denotes a real valued function such that there exists a constant  $C=C(\lambda,A_{*},C_{H},\eta',L_{0}, \kappa, \|f\|_{\operatorname{Lip}}) >0$ such that $|E|\le C$. The specific formulas for values of $C$ might change from line to line in the computation.

By Lemma 5.10 in \cite{DobbsStenlund_2016} we know that
\beq
| \mu(\bar{f}_{n,\lfloor ns\rfloor}\bar{f}_{n,\lfloor nr\rfloor})| \le E\vartheta^{n|r-s|}.\label{off diagonal}
\eeq

By \eqref{off diagonal} we see that
\beqn
\left| n\int\int_{R_{n}}\mu(\bar{f}_{n,\lfloor ns\rfloor}\bar{f}_{n,\lfloor nr\rfloor})dr ds \right|\le E\vartheta^{na_{n}}= E\vartheta^{n^{\kappa}}.
\eeqn
 For large enough $n$, we have $E\vartheta^{n^{\kappa}}\le 6\|f\|_{\infty}^{2}n^{-1+2\kappa}$. Thus 
\beqn
\left| n\int\int_{Q_{n}\cup R_{n}}\mu(\bar{f}_{n,\lfloor ns\rfloor}\bar{f}_{n,\lfloor nr\rfloor})dr ds \right| \le 12\|f\|_{\infty}^{2}na_{n}^{2}=Ena_{n}^{2}.
\eeqn

The only major contribution to the integral now comes from $P_{n}$, i.e. 
\beq
n\int_{0}^{t}\int_{0}^{t}\mu(\bar{f}_{n,\lfloor ns\rfloor}\bar{f}_{n,\lfloor nr\rfloor})dr ds=n\int_{2a_{n}}^{t-a_{n}}\int_{s-a_{n}}^{s+a_{n}}\mu(\bar{f}_{n,\lfloor ns\rfloor}\bar{f}_{n,\lfloor nr\rfloor})dr ds + Ena_{n}^{2}. \label{equ1}
\eeq
Next we will show that $n\int_{s-a_{n}}^{s+a_{n}}\mu(\bar{f}_{n,\lfloor ns\rfloor}\bar{f}_{n,\lfloor nr\rfloor})dr\approx \hat{\sigma}_{s}^{2}$:

 By Lemma \ref{b lemma} we have
\beqn
\|\varrho_{n,\lfloor nr\rfloor}-\hat{\varrho}_{s}\|_{L^{1}}= E(n^{-\eta'}+|r-s|^{\eta'}),
\eeqn 
when $r>bn^{-1}\log n$. Thus 
\beqn
\sup_{r\in (s-a_{n},s+a_{n})}\|\varrho_{n,\lfloor nr\rfloor}-\hat{\varrho}_{s}\|_{L^{1}}= E(n^{-\eta'}+a_{n}^{\eta'}) = Ea_{n}^{\eta'}.
\eeqn
From this it follows that
\begin{align}
\begin{split}
&n\int_{s-a_{n}}^{s+a_{n}}\mu(\bar{f}_{n,\lfloor ns\rfloor}\bar{f}_{n,\lfloor nr\rfloor})dr 
\\
=\, &n\int_{s-a_{n}}^{s+a_{n}}\mu({f}_{n,\lfloor ns\rfloor}{f}_{n,\lfloor nr\rfloor})-\mu({f}_{n,\lfloor ns\rfloor})\mu({f}_{n,\lfloor nr\rfloor})dr
\\
=\,& n\int_{s-a_{n}}^{s+a_{n}}\mu({f}_{n,\lfloor ns\rfloor}{f}_{n,\lfloor nr\rfloor})-\hat{\mu}_{s}(f)^{2}dr+ n\int_{s-a_{n}}^{s+a_{n}} \hat{\mu}_{s}(f)^{2}-  \mu({f}_{n,\lfloor ns\rfloor})\mu({f}_{n,\lfloor nr\rfloor})dr
\\
=\,& n\int_{s-a_{n}}^{s+a_{n}}\mu({f}_{n,\lfloor ns\rfloor}{f}_{n,\lfloor nr\rfloor})-\hat{\mu}_{s}(f)^{2}dr +Ena_{n}^{1+\eta'}.\label{eq: bars of}
\end{split}
\end{align}

Define $b_{n}=\frac{1}{n}(1-\lbrace ns\rbrace )$.  We have  
\begin{align*}
&n\int_{s}^{s+a_{n}}\mu(f_{n,\lfloor ns\rfloor}f_{n,\lfloor nr\rfloor})dr
= n\int_{0}^{a_{n}}\mu(f_{n,\lfloor ns\rfloor}f_{n,\lfloor n(s+r)\rfloor})dr
\\
=\, &b_{n}n\mu_{n,\lfloor ns\rfloor}(f^{2})+ n\int_{b_{n}}^{a_{n}}\mu_{n,\lfloor ns\rfloor}(f f\circ T_{n,\lfloor n(s+r)\rfloor}\circ \cdots \circ T_{n,\lfloor ns\rfloor +1} )dr
\\
=\, &b_{n}n\hat{\mu}_{s}(f^{2})+ n\int_{b_{n}}^{a_{n}}\hat{\mu}_{s}(f f\circ T_{n,\lfloor n(s+r)\rfloor}\circ \cdots \circ T_{n,\lfloor ns\rfloor +1} )dr + E(a_{n}^{\eta'}+na_{n}^{1+\eta'})
\\
=\, &n\int_{0}^{b_{n}}m(f\hat{\varrho}_{s}f) dr + n\int_{b_{n}}^{a_{n}}m(f\cL_{n,\lfloor n(s+r)\rfloor}\cdots \cL_{n,\lfloor ns\rfloor +1}(\hat{\varrho}_{s}f)) dr + E(na_{n}^{1+\eta'}).
\end{align*}
We want to replace $\cL_{n,\lfloor n(s+r)\rfloor}\cdots \cL_{n,\lfloor ns\rfloor +1}$ by $\cL_{\gamma_s}
^{\lfloor n(s+r)\rfloor -\lfloor ns \rfloor}$. For this purpose notice that for every $j\in \{ 
\lfloor ns \rfloor,...,\lfloor n(s+r) \rfloor \}$ and $(r/n)\le a_{n}$ we have $d_{C^{1}}(\gamma_{s}, 
T_{n,j})\le d_{C^{1}}(\gamma_{s},\gamma_{j})+d_{C^{1}}(\gamma_{j}, T_{n,j})\le E(r/n)^{\eta} + En^{-
\eta}\le Ea_{n}^{\eta}$. 
We have
\begin{align*}
&\|\cL_{n,\lfloor n(s+r)\rfloor}\cdots \cL_{n,\lfloor ns\rfloor +1}(\hat{\varrho}_{s}f)- \cL_{s}
^{\lfloor n(s+r)\rfloor -\lfloor ns \rfloor}(\hat{\varrho}_{s}f)\|_{L^{1}(m)}
\le Enra_{n}^{\eta}= Ena_{n}^{\eta' +1}.
\end{align*} 
Hence,
\beq
n\int_{s}^{s+a_{n}}\mu(f_{n,\lfloor ns \rfloor}f_{n,\lfloor nr \rfloor}) dr = n\int_{0}^{a_{n}}m(f\cL_{s}^{\lfloor n(s+r)\rfloor -\lfloor ns \rfloor}(\hat{\varrho}_{s}f)) dr + En^{2}a_{n}^{2+\eta'}.\label{eq:an}
\eeq
By a similar computation
\beq
n\int_{s-a_{n}}^{s}\mu(f_{n,\lfloor ns \rfloor}f_{n,\lfloor nr \rfloor}) dr = n\int_{-a_{n}}^{0}m(f\cL_{s}^{\lfloor ns\rfloor -\lfloor n(s+r) \rfloor}(\hat{\varrho}_{s}f)) dr + En^{2}a_{n}^{2+\eta'}.\label{eq:-an}
\eeq
Thus by \eqref{eq: bars of}, \eqref{eq:an},\eqref{eq:-an} and using the formula \eqref{integral variance} for the variance, we have 
\begin{align}
\begin{split}
&n\int_{s-a_{n}}^{s+a_{n}}\mu(\bar{f}_{n,\lfloor ns \rfloor}\bar{f}_{n,\lfloor nr \rfloor}) dr
\\
=\,& n\int_{s-a_{n}}^{s+a_{n}}\mu({f}_{n,\lfloor ns\rfloor}{f}_{n,\lfloor nr\rfloor})-\hat{\mu}_{s}(f)^{2}dr +Ena_{n}^{1+\eta'}
\\
=\, &n\int_{-a_{n}}^{a_{n}}m(f\cL_{s}^{|\lfloor ns\rfloor -\lfloor n(s+r) \rfloor|}(\hat{\varrho}_{s}f))-\hat{\mu}_{s}(f)^{2} dr + Ena_{n}^{1+\eta'}+ En^{2}a_{n}^{2+\eta'}
\\
=\,&n\int_{-a_{n}}^{a_{n}}\hat{\mu}_{s}(f f\circ\gamma_{s}^{|\lfloor ns\rfloor -\lfloor n(s+r) \rfloor|})-\hat{\mu}_{s}(f)^{2} dr + En^{2}a_{n}^{2+\eta'}
\\
=\,&n\int_{-\infty}^{\infty}\hat{\mu}_{s}(f f\circ\gamma_{s}^{|\lfloor ns\rfloor -\lfloor n(s+r) \rfloor|})-\hat{\mu}_{s}(f)^{2} dr + En^{2}a_{n}^{2+\eta'}+ E\vartheta^{na_{n}}
\\
=\,&\hat{\sigma}_{s}^{2}(f) + En^{2}a_{n}^{2+\eta'}.\label{equ2}
\end{split}
\end{align}
Note that we can choose an upper bound for $|E|$ that is independent of $s$. This is because $\hat{\varrho}_{s}\in \cD_{L_{*}}$.

Therefore by \eqref{equ1} and \eqref{equ2} 
\begin{align*}
\mu\left((\xi_{n}(t))^{2}\right)&= n\int_{2a_{n}}^{t-a_{n}}\int_{s-a_{n}}^{s+a_{n}}\mu(\bar{f}_{n,\lfloor ns\rfloor}\bar{f}_{n,\lfloor nr\rfloor})dr ds + Ena_{n}^{2}
\\
&= \int_{2a_{n}}^{t-a_{n}}\hat{\sigma}_{s}^{2}(f) + En^{2}a_{n}^{2+\eta'}ds + Ena_{n}^{2}
\\
&= \int_{2a_{n}}^{t-a_{n}}\hat{\sigma}_{s}^{2}(f) ds + Ena_{n}^{2}+En^{2}a_{n}^{2+\eta'}
\\
&= \int_{0}^{t}\hat{\sigma}_{s}^{2}(f) ds + Ena_{n}^{2}+En^{2}a_{n}^{2+\eta'}+Ea_{n}
\\
&= \sigma_{t}^{2}(f)  + Ena_{n}^{2}+En^{2}a_{n}^{2+\eta'}
\\
&= \sigma_{t}^{2}(f)  + En^{-1+2\kappa}+En^{2\kappa-\eta'(1-\kappa)}.
\end{align*}
Let $0<\eta''<\eta'$. Recall that we have assumed that $\kappa\in \,]0,\frac{1}{4}[, 2\kappa < \eta'(1-\kappa)$ and $\eta'<\eta$. By choosing $\eta'=(\eta+\eta'')/2$ and $\kappa= (\eta-\eta'')/(4(1+\eta))$ these assumptions are satisfied as the reader may check, and we have 
\beqn
n^{-1+2\kappa}=n^{-1+\frac{\eta-\eta''}{2(1+\eta)}}=n^{\frac{-2-\eta-\eta''}{2(1+\eta)}}\le n^{\frac{-4\eta''}{4}}=n^{-\eta''} 
\eeqn
and
\beqn
 n^{2\kappa-\eta'(1-\kappa)}= n^{ \frac{\eta-\eta''}{2(1+\eta)} +\frac{\eta+\eta''}{2}\left(\frac{\eta-\eta''}{4(1+\eta)}-1\right)}= n^{\frac{4(\eta-\eta'')+ (\eta+\eta'')(-4-3\eta-\eta'))}{8(1+\eta)}}=n^{\frac{8\eta''-4\eta\eta''-(\eta'')^{2}-3\eta^{2}}{8(1+\eta)}}\le n^{-\eta''}  . 
\eeqn
 Thus it follows that
$
\sigma_{n,t}^{2}=\mu\left((\xi_{n}(t))^{2}\right) = \sigma_{t}^{2}+ En^{-\eta''},
$
where  
\beqn|E|<C=C(\lambda,A_{*},C_{H},\eta,\eta'',L_{0}, \|f\|_{\operatorname{Lip}}).
\eeqn
\end{proof}

\subsection{Proofs of Theorems \ref{univariate quasitheorem} and \ref{univariate quasitheorem 2}.}

An upper bound on the Wasserstein distance of two normal distributions is given by the next lemma.
\begin{lem}\label{normal distributions distance}
Let $a,b\ge 0$ and $Z\sim N(0,1)$. Then $d_\mathscr{W}(aZ,bZ)\le \dfrac{\sqrt{2}|a-b|}{\sqrt{\pi}}$.
\end{lem}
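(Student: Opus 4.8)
The statement is an immediate consequence of the dual (Kantorovich--Rubinstein) description of the Wasserstein distance already recorded in the excerpt, namely $d_\mathscr{W}(X_1,X_2)=\sup_{h\in\mathscr{W}}|\mu(h(X_1))-\mu(h(X_2))|$ with $\mathscr{W}$ the class of $1$-Lipschitz functions. The idea is to use the \emph{same} standard normal $Z$ as a coupling of $aZ$ and $bZ$, so that the problem reduces to computing $\E|Z|$.

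\textbf{Step 1: reduce to a single expectation.} Fix any $h\in\mathscr{W}$. Since $aZ$ and $bZ$ live on the same probability space, we have
\[
|\mu(h(aZ))-\mu(h(bZ))| = |\E[h(aZ)-h(bZ)]| \le \E|h(aZ)-h(bZ)| \le \E|aZ-bZ| = |a-b|\,\E|Z|,
\]
using the $1$-Lipschitz property of $h$ in the penultimate inequality. Taking the supremum over $h\in\mathscr{W}$ gives $d_\mathscr{W}(aZ,bZ)\le |a-b|\,\E|Z|$.

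\textbf{Step 2: evaluate $\E|Z|$.} A direct computation gives
\[
\E|Z| = \frac{2}{\sqrt{2\pi}}\int_0^\infty z\, e^{-z^2/2}\,dz = \frac{2}{\sqrt{2\pi}} = \sqrt{\tfrac{2}{\pi}} = \frac{\sqrt 2}{\sqrt\pi}.
\]
Combining this with Step 1 yields $d_\mathscr{W}(aZ,bZ) \le \dfrac{\sqrt 2\,|a-b|}{\sqrt\pi}$, as claimed.

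\textbf{Main obstacle.} There is essentially no obstacle here: the only points requiring care are invoking the dual formula for $d_\mathscr{W}$ correctly (so that the coupling bound applies) and the elementary Gaussian integral $\E|Z|=\sqrt{2/\pi}$. One could alternatively argue via the explicit one-dimensional optimal coupling through quantile functions, but the shared-$Z$ coupling above is the shortest route.
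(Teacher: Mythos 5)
Your proof is correct and follows exactly the same route as the paper: both use the shared-$Z$ coupling together with the $1$-Lipschitz bound to reduce the distance to $|a-b|\,\E|Z|$, and then evaluate $\E|Z|=\sqrt{2/\pi}$. Your write-up simply makes the intermediate steps more explicit than the paper's one-line computation.
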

\begin{proof}
Let $h$ be $1$-Lipschitz and $a,b\ge 0$. Then
\begin{align*}
\left|\bE[h(aZ)]-\bE[h(bZ)]\right| &\le |a-b| \bE\left|Z\right|=\dfrac{\sqrt{2}|a-b|}{\sqrt{\pi}}.
\end{align*}
\end{proof}

Next theorem proves Theorem \ref{univariate quasitheorem} for large values of $n$. For small $n$ Theorem \ref{univariate quasitheorem} holds trivially by choosing large enough $C$. 
\begin{thm}
Let $t_{0}\in \,]0,1]$ and $\gamma$ be such that $\hat{\sigma}_{t_{0}}^{2}(f)>0$. Then for all $\eta'\le \eta$ there exists a constant  $C=C(\lambda, A_{*}, C_{H},\eta, \eta',L_{0}, \|f\|_{\operatorname{Lip}}, t_{0},\hat{\sigma}_{t_{0}}^{2})>0$ and a constant $n_{0}>0$ such that for every $t\ge t_{0}$ and $n\ge n_{0}$
\beqn
d_{\mathscr{W}}\left(\xi_{n}(t), \sigma_{t}Z\right) 
 \le  C(n^{-\frac{1}{2}}\log n+n^{-\eta'}).
\eeqn
\label{pre quasitheorem}
\end{thm}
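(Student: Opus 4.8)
The plan is to interpolate through the Gaussian $\sigma_{n,t}Z$ and split
\[
d_{\mathscr{W}}(\xi_{n}(t),\sigma_{t}Z)\le d_{\mathscr{W}}(\xi_{n}(t),\sigma_{n,t}Z)+d_{\mathscr{W}}(\sigma_{n,t}Z,\sigma_{t}Z),
\]
treating the first (genuine CLT) term with Theorem~\ref{main thm 1d} and the second (comparison of two centered normals) with Lemma~\ref{normal distributions distance}. Throughout we take $\eta'<\eta$, which is all that Theorem~\ref{univariate quasitheorem} requires.

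For the variance-comparison term, Lemma~\ref{normal distributions distance} gives $d_{\mathscr{W}}(\sigma_{n,t}Z,\sigma_{t}Z)\le\tfrac{\sqrt2}{\sqrt\pi}|\sigma_{n,t}-\sigma_{t}|\le\tfrac{\sqrt2}{\sqrt\pi}\,|\sigma_{n,t}^{2}-\sigma_{t}^{2}|/\sigma_{t}$, so I first record a lower bound $\sigma_{t}\ge\sigma_{t_{0}}>0$ valid for all $t\ge t_{0}$: since $\hat{\sigma}_{s}^{2}(f)\ge0$ always and $s\mapsto\hat{\sigma}_{s}^{2}(f)$ is continuous by Lemma~\ref{Hölder continuity lemma} with $\hat{\sigma}_{t_{0}}^{2}(f)>0$, there is $\delta>0$ (controlled quantitatively by the Hölder constant from Lemma~\ref{Hölder continuity lemma}, hence by $\lambda,A_{*},C_{H},\|f\|_{\operatorname{Lip}},\eta$) with $\hat{\sigma}_{s}^{2}(f)\ge\tfrac12\hat{\sigma}_{t_{0}}^{2}(f)$ on $[t_{0}-\delta,t_{0}]$, whence $\sigma_{t}^{2}\ge\sigma_{t_{0}}^{2}\ge\tfrac{\delta}{2}\hat{\sigma}_{t_{0}}^{2}(f)>0$. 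Then Lemma~\ref{variance distance lemma} gives $|\sigma_{n,t}^{2}-\sigma_{t}^{2}|\le Cn^{-\eta'}$ uniformly in $t$, so $d_{\mathscr{W}}(\sigma_{n,t}Z,\sigma_{t}Z)\le Cn^{-\eta'}$.

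For the CLT term, set $m=\lfloor nt\rfloor$ and $S=\sum_{i=0}^{m-1}\bar{f}_{n,i}$, so $\xi_{n}(t)=\tfrac1{\sqrt n}S+\tfrac{\{nt\}}{\sqrt n}\bar{f}_{n,m}$ with the end term of sup-norm at most $2\|f\|_{\infty}/\sqrt n$; dropping it costs at most $Cn^{-1/2}$ in $d_{\mathscr{W}}$ and moves $\sigma_{n,t}$ by at most $2\|f\|_{\infty}/\sqrt n$. Writing $W'=S/\sqrt m$ we have $\tfrac1{\sqrt n}S=\sqrt{m/n}\,W'$ and $\Var(\tfrac1{\sqrt n}S)=\tfrac mn\Var(W')=:\hat{\sigma}_{n,t}^{2}$. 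Now I apply Theorem~\ref{main thm 1d} to $(f_{n,i})_{i\ge0}$ with $N=m$: the maps $T_{n,i}$ all lie in $\cM$, so (B1) holds with $\rho(i)=\vartheta^{i}$ by Lemma~\ref{rho lem multivariate}, (B2) holds with $\tilde\rho$ as in \eqref{univar rho tilde} by Theorem~\ref{fAW lemma}, and (B3) holds because $\Var(W')=\tfrac nm\hat{\sigma}_{n,t}^{2}$ is bounded below: $\hat{\sigma}_{n,t}^{2}\ge\sigma_{n,t}^{2}-Cn^{-1/2}\ge\sigma_{t}^{2}-Cn^{-\eta'}-Cn^{-1/2}\ge\tfrac12\sigma_{t_{0}}^{2}$ for $n\ge n_{0}$. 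Choosing $K=\lceil 2\log m/(-\log\vartheta)\rceil$ exactly as in the proof of Theorem~\ref{circle main thm} (valid since $m\ge 16/(1-\vartheta)^{2}$ for $n\ge n_{0}$), Theorem~\ref{main thm 1d} yields $d_{\mathscr{W}}(W',\sqrt{n/m}\,\hat{\sigma}_{n,t}\,Z)\le Cm^{-1/2}\log m$ with $C$ uniform in $t$; since $t_{0}/2\le m/n\le1$ for $n\ge n_{0}$, this is $\le Cn^{-1/2}\log n$. Rescaling by $\sqrt{m/n}\le1$ and then reinstating the end term via two further triangle-inequality steps (using $|\sigma_{n,t}-\hat{\sigma}_{n,t}|\le 2\|f\|_{\infty}/\sqrt n$ and Lemma~\ref{normal distributions distance}) gives $d_{\mathscr{W}}(\xi_{n}(t),\sigma_{n,t}Z)\le Cn^{-1/2}\log n$. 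Adding the two contributions proves the claim, with $n_{0}$ and $C$ depending only on the listed quantities and not on $t$.

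\textbf{Main obstacle.} The essential work is bookkeeping: reconciling the two normalizations $\sqrt n$ versus $\sqrt{\lfloor nt\rfloor}$ and the fractional end term of $\xi_{n}(t)$, and — crucially — ensuring that the lower bound on the variance, and hence every constant produced by Theorem~\ref{main thm 1d}, is uniform over $t\ge t_{0}$. That uniformity is precisely what Lemmas~\ref{Hölder continuity lemma} and~\ref{variance distance lemma} are designed to supply.
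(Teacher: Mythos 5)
Your proposal is correct and follows essentially the same route as the paper: a triangle-inequality decomposition through an intermediate Gaussian, the uniform lower bound on $\sigma_t^2$ (and hence on $\sigma_{n,t}^2$ for $n\ge n_0$) via Hölder continuity of $s\mapsto\hat{\sigma}_s^2$ together with Lemma~\ref{variance distance lemma}, the CLT step via Theorem~\ref{main thm 1d}/\ref{circle main thm} applied to the renormalized partial sum, and Lemma~\ref{normal distributions distance} for comparing the normals. The only differences are cosmetic (you truncate at $\lfloor nt\rfloor$ where the paper rounds up to $\lceil nt\rceil$, and you unpack Theorem~\ref{circle main thm} rather than citing it directly), and your remark that the argument really requires $\eta'<\eta$ matches what the paper's own proof uses.
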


\begin{proof}
This proof is divided in three steps. The Wasserstein distances
\beq
d_{\mathscr{W}}\left(\xi_{n}\left( t \right),\xi_{n}\left(\lceil nt \rceil/n\right)\right),\quad d_{\mathscr{W}}\left(\xi_{n}\left(\lceil nt \rceil/n\right),\sigma_{\lceil nt \rceil/n}Z\right)\quad \text{and} \quad d_{\mathscr{W}}\left(\sigma_{\lceil nt \rceil/n}Z,\sigma_{t}Z\right)\label{three distances}
\eeq
are estimated in the corresponding order. The final result then follows by triangle inequality.

 Before computing upper bounds on the Wasserstein distances in \eqref{three distances} we need to guarantee that for every  $t\ge t_{0}$ and large enough $n$ the variances $\sigma_{t}$ and $\sigma_{n,t}$ are greater than some constant.
 
Since $t\mapsto \hat{\sigma}_{t}^{2}$ is Hölder continuous by Lemma \ref{Hölder continuity lemma} it follows that there exists $t_{1}=t_{1}(\lambda, A_{*},C_{H},\|f\|_{\operatorname{Lip}},\eta, \hat{\sigma}_{t_{0}}^{2}, t_{0}) \le t_{0}$ such that for every $t\in [t_{1},t_{0}]$ it holds that $\hat{\sigma}_{t}^{2}\ge \dfrac{\hat{\sigma}_{t_{0}}^{2}}{2}$. For $t\ge t_{0}$ this implies $\sigma^{2}_{t}\ge \dfrac{(t_{0}-t_{1})\hat{\sigma}_{t_{0}}^{2}}{2}$ and by Lemma \ref{variance distance lemma} for every $\eta'<\eta$ there exists $C=C(\lambda,A_{*},C_{H},\eta,\eta',L_{0}, \|f\|_{\operatorname{Lip}})$ such that
\beqn
|\sigma_{n,t}^{2}- \sigma_{t}^{2}| \le Cn^{-\eta'}  \quad \Rightarrow  \quad \sigma_{n,t}^{2}\ge \sigma_{t}^{2}-Cn^{-\eta'}\ge \dfrac{(t_{0}-t_{1})\hat{\sigma}_{t_{0}}^{2}}{2}- Cn^{-\eta'}. 
\eeqn
Thus there exists $n_{0}=n_{0}(\lambda,A_{*},C_{H},\eta,\eta',L_{0}, \|f\|_{\operatorname{Lip}},\hat{\sigma}_{t_{0}}^{2})$ such that 
\beq
\sigma_{t}^{2}\ge \dfrac{(t_{0}-t_{1})\hat{\sigma}_{t_{0}}^{2}}{2} \quad \text{and} \quad \sigma_{n,t}^{2}\ge \dfrac{(t_{0}-t_{1})\hat{\sigma}_{t_{0}}^{2}}{4}, \label{positive sigmas}
\eeq
 when $n\ge n_{0}$ and $t\ge t_{0}$.

 To be able to apply Theorem \ref{circle main thm} we also assume that $ n_{0}t_{0}\ge 16/(1-\vartheta)^{2}$. 

\textbf{Step 1.}
Notice that
$
\xi_{n}\left(\lceil nt \rceil/n\right)= \frac{1}{\sqrt{n}}\sum_{i=0}^{\lceil nt \rceil -1} \bar{f}_{n,i}.
$
Thus 
\beq
d_\mathscr{W}\left(\xi_{n}(t),\xi_{n}\left(\frac{\lceil nt \rceil}{n}\right)\right) \le
\left\|\xi_{n}\left(\frac{\lceil nt \rceil}{n}\right)-\xi_{n}(t)\right\|_{\infty}= \left\|\frac{1-\{ nt\}}{\sqrt{n}} \bar{f}_{n,\lfloor nt\rfloor}\right\|_{\infty}\le \dfrac{2\|f\|_{\infty}}{\sqrt{n}}, \label{Wasser 1}
\eeq
where the first inequality follows easily from the definition of Wasserstein distance.

\textbf{Step 2.}
Let $t\ge t_{0}$ and $n\ge n_{0}$. We have by definition
\beqn
\xi_{n}\left(\frac{\lceil nt \rceil}{n}\right)= \frac{\sqrt{\lceil nt\rceil}}{\sqrt{n}}
\left(\frac{1}{\sqrt{\lceil nt\rceil}}\sum_{i=0}^{\lceil nt \rceil -1} \bar{f}_{n,i}
\right).
\eeqn
Denote 
\beqn
V=V(\lceil nt\rceil)= \frac{1}{\sqrt{\lceil nt\rceil}}\sum_{i=0}^{\lceil nt \rceil -1} \bar{f}_{n,i}=\dfrac{\xi_{n}\left(\frac{\lceil nt \rceil}{n}\right) \sqrt{n}}{ \sqrt{\lceil nt\rceil}}.
\eeqn 
Denote the variance of $V(\lceil nt\rceil)$ by $v_{\lceil nt\rceil}^{2}= \dfrac{n}{\lceil nt\rceil}\sigma_{n,\lceil nt\rceil/n}^{2}$.
Since $\lceil nt \rceil\ge 16/(1-\vartheta)^{2}$, we can apply
 Theorem \ref{circle main thm} to $V(\lceil nt\rceil)$ and it yields 
\begin{align}
\begin{split}d_{\mathscr{W}}\left(\xi_{n}\left( \dfrac{\lceil nt\rceil}{n} \right),\sigma_{n,\lceil nt\rceil/n}Z\right) &= d_{\mathscr{W}}\left(\frac{\sqrt{\lceil nt\rceil}}{\sqrt{n}}V,\sigma_{n,\lceil nt\rceil/n}Z\right)
\\
&=\dfrac{\sqrt{\lceil nt\rceil}}{\sqrt{n}}d_{\mathscr{W}}\left(V,v_{\lceil nt\rceil}Z\right)
\\
&\le C\lceil nt\rceil^{-\frac{1}{2}}\log \lceil nt\rceil\le Ct_{0}^{-\frac{1}{2}} n^{-\frac{1}{2}}\log n=C n^{-\frac{1}{2}}\log n, \label{another variance}
\end{split}
\end{align}
where $C=C(\lambda,A_{*},\|f\|_{\operatorname{Lip}},L_{0})$.

 We have $\lceil nt\rceil/n\ge t$ and thus $\sigma_{\lceil nt\rceil/n}\ge \left((t_{0}-t_{1})\hat{\sigma}_{t_{0}}^{2}/2\right)^{\frac{1}{2}}$. Therefore by Lemma \ref{variance distance lemma}
 \beq
\left|\sigma_{n,\lceil nt\rceil/n}- \sigma_{\lceil nt\rceil/n}\right|= \frac{\left|\sigma^{2}_{n,\lceil nt\rceil/n}- \sigma^{2}_{\lceil nt\rceil/n}\right|}{\sigma_{n,\lceil nt\rceil/n}+ \sigma_{\lceil nt\rceil/n}}\le Cn^{-\eta'} \left(\dfrac{(t_{0}-t_{1})\hat{\sigma}_{t_{0}}^{2}}{2}\right)^{-\frac{1}{2}}=Cn^{-\eta'},\label{more variance}
\eeq
where in the last expression $C=C(\lambda,A_{*},C_{H},\eta,\eta',L_{0}, \|f\|_{\operatorname{Lip}},t_{0},\hat{\sigma}_{t_{0}}^{2})$. Thus by \eqref{another variance} and \eqref{more variance}, Lemma \ref{normal distributions distance} yields
\begin{align}
\begin{split}
d_{\mathscr{W}}\left(\xi_{n}\left(\frac{\lceil nt \rceil}{n}\right),\sigma_{\frac{\lceil nt \rceil}{n}}Z\right) &\le d_{\mathscr{W}}\left(\xi_{n}\left( \frac{\lceil nt\rceil}{n} \right),\sigma_{n,\frac{\lceil nt\rceil}{n}}Z\right) + d_{\mathscr{W}}\left(\sigma_{n,\frac{\lceil nt\rceil}{n}}Z,\sigma_{\frac{\lceil nt\rceil}{n}}Z\right)
\\
&\le C(n^{-\frac{1}{2}}\log n+n^{-\eta'}),\label{Wasser 2}
\end{split}
\end{align}
where $C=C(\lambda,A_{*},C_{H},\eta,\eta',L_{0}, \|f\|_{\operatorname{Lip}},t_{0},\hat{\sigma}_{t_{0}}^{2})$.

\textbf{Step 3.} 
By Lemma \ref{Hölder continuity lemma} $t\mapsto \hat{\sigma}_{t}^{2}$ is Hölder continuous and thus $\|\hat{\sigma}_{t}^{2}\|_{\infty}\le C$, for every $t\in [0,1]$, where $C=C(\lambda, A_{*},\|f\|_{\operatorname{Lip}},\eta)$. Therefore $\left|\sigma_{\lceil nt \rceil/n}^{2}- \sigma_{t}^{2}\right| \le Cn^{-1}$. Let $t\ge t_{0}$ and $n\ge n_{0}$. Now by \eqref{positive sigmas}
\beqn
\left|\sigma_{\lceil nt \rceil/n}- \sigma_{t}\right|\le \frac{\left|\sigma_{\lceil nt \rceil/n}^{2}- \sigma_{t}^{2}\right|}{\sigma_{\lceil nt \rceil/n}+ \sigma_{t}}\le Cn^{-1}\left(\dfrac{(t_{0}-t_{1})\hat{\sigma}_{t_{0}}^{2}}{2}\right)^{-\frac{1}{2}}\le Cn^{-1},
\eeqn 
where in the last expression $C=C(\lambda, A_{*},C_{H},\eta,\|f\|_{\operatorname{Lip}},t_{0},\hat{\sigma}_{t_{0}}^{2})$.
 Thus by Lemma \ref{normal distributions distance}
\beq
d_\mathscr{W}\left(\sigma_{\lceil nt \rceil/n}Z,\sigma_{t}Z\right)\le Cn^{-1}.\label{Wasser 3}
\eeq

Collecting the estimates from \eqref{Wasser 1}, \eqref{Wasser 2} and \eqref{Wasser 3}, we see that for $n\ge n_{0}$ and $t\ge t_{0}$
\beqn
d_{\mathscr{W}}\left(\xi_{n}(t), \sigma_{t}Z\right) 
 \le C(n^{-\eta'}+n^{-\frac{1}{2}}\log n),
\eeqn
where $C=C(\lambda, A_{*},C_{H},\eta,\|f\|_{\operatorname{Lip}},L_{0},t_{0},\hat{\sigma}_{t_{0}}^{2})$.
\end{proof}

Next we give the proof of Theorem \ref{univariate quasitheorem 2}:

 Let $0<\eta'<\eta$. Let $n\ge 1$ and $t\in \,[0,1]$. Then at least one of the following cases holds: \textbf{Case 1:} ~$nt\ge 16/(1-\vartheta)^{2} $; \textbf{Case 2:} $t\le n^{-\eta'}$; \textbf{Case 3:} $n\le (16/(1-\vartheta)^{2} )^{1/(1-\eta')}$. We show that in each of these cases, there exists $C=C(\lambda, A_*,\eta',\eta, C_{H}, \|f\|_{\operatorname{Lip}}, L_{0})$ such that
\beq
d_{\mathscr{W}}\left(\xi_{n}(t), \sigma_{t}Z\right) 
 \le Cn^{-\frac{\eta'}{2}}+Cn^{-\frac{1}{6}}\log n. \label{goal bound}
\eeq 

In Case 1, we follow the proof of Theorem \ref{pre quasitheorem} making the following changes. First, we do not define any $t_{0}$ or $t_{1}$. Second, in \eqref{another variance} instead of Theorem \ref{circle main thm}, we apply Corollary \ref{N^1/6 convergence cor}, which yields the bound $Cn^{-1/6}\log n$ on $d_{\mathscr{W}}\left(\xi_{n}\left( \lceil nt\rceil/n \right),\sigma_{n,\lceil nt\rceil/n}Z\right)$. Third,
in estimating 
$
\left|\sigma_{n,\lceil nt\rceil/n}- \sigma_{\lceil nt\rceil/n}\right|$ and $\left|\sigma_{\lceil nt \rceil/n}- \sigma_{t}\right| 
$
we use that for $x_{1},x_{2}\ge 0$ we have $|x_{1}-x_{2}|\le \sqrt{|x_{1}^{2}-x_{2}^{2}|}$. This yields the estimates $\left|\sigma_{n,\lceil nt\rceil/n}Z- \sigma_{\lceil nt\rceil/n}Z\right| \le Cn^{-\frac{\eta'}{2}}$ and $\left|\sigma_{\lceil nt \rceil/n}Z- \sigma_{t}Z\right| \le Cn^{-\frac{1}{2}}$. By \eqref{Wasser 1} we have $d_\mathscr{W}\left(\xi_{n}(t),\xi_{n}\left(\lceil nt \rceil/n\right)\right) \le Cn^{-1/2}$. Overall, collecting these estimates, we have that \eqref{goal bound} holds in Case 1.

In Case 2 we see that $\sigma_{n,nt}\le \|\bar{f}\|_{\infty}t\le Cn^{-\eta'/2}$. Furthermore since $\hat{\sigma}_{s}^{2}$ is bounded we also have $\sigma_{t}\le Ct^{1/2}\le Cn^{-\eta'/2}$. Now \eqref{upper bound by variance} yields  $d_{\mathscr{W}}\left(\xi_{n}(t), \sigma_{t}Z\right) 
 \le Cn^{-\eta'/2}$. Clearly in Case 3  we can choose large enough $C$ such that \eqref{goal bound} holds.

We can now choose $C$ in Theorem \ref{univariate quasitheorem 2} to be the maximum of the corresponding constants in Cases 1--3. This completes the proof of Theorem \ref{univariate quasitheorem 2}.

\subsection{Proof of Theorem \ref{multivariate quasitheorem}}
In this subsection we present the proof of Theorem \ref{multivariate quasitheorem}. 

Let $M$ be a $d\times d$ matrix. We introduce the following norm that is used through this subsection:
\beqn
|M|=\max\{|M_{\alpha\beta}|: \alpha,\beta=1,...,d\}.
\eeqn

The following two lemmas generalize Lemmas \ref{Hölder continuity lemma} and \ref{variance distance lemma}. The proofs are similar and thus omitted.  

\begin{lem}
$t\rightarrow (\hat{\Sigma}_{t}^{2})_{\alpha\beta}(f)$ is Hölder continuous with every exponent $\eta''<\eta$ and entries $\alpha,\beta\in \{1,...,d\}$. An upper bound for the corresponding Hölder constant $C$ can be given as a function of $\lambda, A_{*},\eta,\eta'', C_{H}$ and $\|f\|_{\operatorname{Lip}}$. \label{Hölder continuity lemma multiv}
\end{lem}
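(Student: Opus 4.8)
The plan is to reproduce, step for step, the proof of Lemma~\ref{Hölder continuity lemma}, carrying everything out entry by entry. First I would record the series representation of the covariance entries. Fix $\alpha,\beta\in\{1,\dots,d\}$. Using stationarity of $\hat\mu_t$ under $\gamma_t$ together with the duality relation \eqref{transfer op rule} and the identity $\cL_t\hat\varrho_t=\hat\varrho_t$, one gets, as the matrix analogue of \eqref{variance as a sum},
\[
(\hat\Sigma_t)_{\alpha\beta}(f) = \hat\mu_t[\hat f_{t,\alpha}\hat f_{t,\beta}] + \sum_{k=1}^\infty m\big[\hat f_{t,\beta}\,\cL_t^k(\hat\varrho_t\hat f_{t,\alpha})\big] + \sum_{k=1}^\infty m\big[\hat f_{t,\alpha}\,\cL_t^k(\hat\varrho_t\hat f_{t,\beta})\big],
\]
and after expanding $\hat f_{t,\alpha}=f_\alpha-\hat\mu_t(f_\alpha)$ this reduces the problem to estimating the $t$- versus $s$-difference of each summand $m[f_\beta\,\cL_t^k(\hat\varrho_t f_\alpha)]$ and its mirror image, together with the $k=0$ term $\hat\mu_t[\hat f_{t,\alpha}\hat f_{t,\beta}]$. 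The point is that the coordinate functions $f_\alpha,f_\beta$ are Lipschitz with $\operatorname{Lip}(f_\alpha),\operatorname{Lip}(f_\beta)\le\|f\|_{\operatorname{Lip}}$, so all the one-dimensional estimates from the proof of Lemma~6.1 in \cite{DobbsStenlund_2016} apply with $f$ replaced by the relevant coordinate.

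Next, fixing $\eta'<\eta$, I would split $m(f_\beta\cL_t^k(\hat\varrho_t f_\alpha))-m(f_\beta\cL_s^k(\hat\varrho_s f_\alpha))$ as $m(f_\beta(\cL_t^k-\cL_s^k)(\hat\varrho_t f_\alpha))+m(f_\beta\cL_s^k(\hat\varrho_t f_\alpha-\hat\varrho_s f_\alpha))$, bound the first piece by $\|f_\beta\|_\infty\|\cL_t^k-\cL_s^k\|_{\operatorname{Lip}\to C^0}\|\hat\varrho_t f_\alpha\|_{\operatorname{Lip}}$ and the second by $\|f_\beta\|_\infty\|f_\alpha\|_\infty\|\hat\varrho_t-\hat\varrho_s\|_{L^1(m)}$ (using that $\cL_s$ is an $L^1$-contraction), and then invoke inputs (19), (8), (22) of \cite{DobbsStenlund_2016} exactly as in Lemma~\ref{Hölder continuity lemma} to obtain, for $k\ge1$,
\[
\big|m(f_\beta\cL_t^k(\hat\varrho_t f_\alpha))-m(f_\beta\cL_s^k(\hat\varrho_s f_\alpha))\big|\le C\,k\,|t-s|^{\eta'},
\]
and the analogous $|\hat\mu_t[\hat f_{t,\alpha}\hat f_{t,\beta}]-\hat\mu_s[\hat f_{s,\alpha}\hat f_{s,\beta}]|\le C|t-s|^{\eta'}$ for the $k=0$ term, with $C=C(\lambda,A_*,C_H,\|f\|_{\operatorname{Lip}},\eta')$. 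For the tails I would apply Lemma~\ref{splitting lemma} (with $\hat\varrho_t\in\cD_{L_*}$, which holds by \eqref{L* property}) to get $\big|\sum_{k=M}^\infty m(\hat f_{t,\beta}\cL_t^k(\hat\varrho_t\hat f_{t,\alpha}))\big|\le C\vartheta^{M/2}$ uniformly in $t$, and likewise for the mirror sum. Summing the first $M$ entrywise estimates and adding the two tails then yields
\[
\big|(\hat\Sigma_t)_{\alpha\beta}(f)-(\hat\Sigma_s)_{\alpha\beta}(f)\big|\le C\big(M^2|t-s|^{\eta'}+\vartheta^{M/2}\big)
\]
for every integer $M\ge1$, hence (after enlarging $C$) for every real $M\ge0$.

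Finally I would run the same optimization over $M$ as in Lemma~\ref{Hölder continuity lemma}: choose $M$ with $|t-s|^{\eta'}=\vartheta^{M/2}$, i.e. $M$ proportional to $\log|t-s|$, and use the elementary bound $x|\log x|\le C_\alpha x^\alpha$ for $x\in(0,1]$ with $\alpha=1-(\eta'-\eta'')/2$ to absorb the $M^2$ factor; this gives $\big|(\hat\Sigma_t)_{\alpha\beta}(f)-(\hat\Sigma_s)_{\alpha\beta}(f)\big|\le C|t-s|^{\eta''}$ with $C=C(\lambda,A_*,C_H,\|f\|_{\operatorname{Lip}},\eta',\eta'')$, and letting $\eta''\uparrow\eta'\uparrow\eta$ delivers the claim for every exponent $\eta''<\eta$. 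There is no genuinely new obstacle relative to the scalar case: the only things to monitor are the bookkeeping for the two cross sums and the uniformity of all constants in $t$ and in the entries $\alpha,\beta$ — which holds because everything is controlled by $\|f\|_{\operatorname{Lip}}$ and by $\hat\varrho_t\in\cD_{L_*}$ — while the one conceptually load-bearing step, exactly as in Lemma~\ref{Hölder continuity lemma}, is the logarithmic trick that upgrades the two-scale estimate to genuine Hölder continuity.
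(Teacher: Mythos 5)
Your proposal is correct and is exactly the argument the paper has in mind: the paper omits the proof of Lemma~\ref{Hölder continuity lemma multiv}, stating only that it is "similar" to that of Lemma~\ref{Hölder continuity lemma}, and your coordinate-wise adaptation (entrywise series representation with the two cross sums, the same splitting of $m(f_\beta\cL_t^k(\hat\varrho_t f_\alpha))-m(f_\beta\cL_s^k(\hat\varrho_s f_\alpha))$, the tail bound from Lemma~\ref{splitting lemma}, and the logarithmic optimization over $M$) is precisely that similar proof carried out in full.
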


\begin{lem}
Let $\eta''<\eta$. Then there exists a constant $C=C(\lambda,A_{*},C_{H},\eta,\eta'',L_{0}, \|f\|_{\operatorname{Lip}})$ such that
\beqn
\left|\Sigma_{n,t}-\Sigma_{t}\right| \le Cn^{-\eta''}
\eeqn
for every $t\in [0,1]$.
\label{covariance distance lemma}
\end{lem}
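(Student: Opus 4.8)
The plan is to run the proof of Lemma \ref{variance distance lemma} entrywise. Fix $\alpha,\beta\in\{1,\dots,d\}$ and write $(\Sigma_{n,t})_{\alpha\beta}=n\int_0^t\int_0^t\mu(\bar f_{n,\lfloor ns\rfloor,\alpha}\,\bar f_{n,\lfloor nr\rfloor,\beta})\,dr\,ds$, while $(\Sigma_t)_{\alpha\beta}=\int_0^t(\hat\Sigma_s)_{\alpha\beta}\,ds$. As in the scalar case I pick $\kappa\in\,]0,\tfrac14[$ with $2\kappa<\eta'(1-\kappa)$, set $a_n=n^{-1+\kappa}$, and split $[0,t]^2$ into the near-diagonal interior strip $P_n$, the near-diagonal boundary region $Q_n$, and the off-diagonal region $R_n$. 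The contributions of $Q_n$ and $R_n$ are handled exactly as before: on $Q_n$ one uses $|\bar f_{n,\cdot,\alpha}\bar f_{n,\cdot,\beta}|\le 4\|f\|_\infty^2$ together with $\mathrm{area}(Q_n)\le 6a_n^2$, and on $R_n$ one uses the exponential decay of pair correlations $|\mu(\bar f_{n,\lfloor ns\rfloor,\alpha}\bar f_{n,\lfloor nr\rfloor,\beta})|\le E\vartheta^{n|r-s|}$, which follows from Lemma 5.10 of \cite{DobbsStenlund_2016} (equivalently from Lemma \ref{rho lem multivariate}) applied componentwise. Hence only $P_n$ contributes, up to an error $Ena_n^2$.

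On $P_n$ I would show, for each fixed $s$, that $n\int_{s-a_n}^{s+a_n}\mu(\bar f_{n,\lfloor ns\rfloor,\alpha}\bar f_{n,\lfloor nr\rfloor,\beta})\,dr$ is close to $(\hat\Sigma_s)_{\alpha\beta}$. The single place where the argument differs from the scalar one is that the two sub-intervals $r>s$ and $r<s$ contribute asymmetrically: for $r>s$ the composition $\cT_{n,\lfloor n(s+r)\rfloor}\circ\cdots\circ\cT_{n,\lfloor ns\rfloor+1}$ sits in front of the coordinate $\beta$, so after replacing $\varrho_{n,\lfloor ns\rfloor}$ by $\hat\varrho_s$ (Lemma \ref{b lemma}) and replacing that composition by $\cL_{\gamma_s}^{\lfloor n(s+r)\rfloor-\lfloor ns\rfloor}$ (using $d_{C^1}(\gamma_s,T_{n,j})\le Ea_n^\eta$), the limiting integrand is $\hat\mu_s(\hat f_{s,\alpha}\,\hat f_{s,\beta}\circ\gamma_s^{k})$; for $r<s$ one instead gets $\hat\mu_s(\hat f_{s,\beta}\,\hat f_{s,\alpha}\circ\gamma_s^{k})$. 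Summing the two halves and extending the range of integration to $]-\infty,\infty[$ (at cost $E\vartheta^{na_n}$) produces exactly $(\hat\Sigma_s)_{\alpha\beta}=\hat\mu_s(\hat f_{s,\alpha}\hat f_{s,\beta})+\sum_{k\ge 1}\bigl(\hat\mu_s(\hat f_{s,\alpha}\,\hat f_{s,\beta}\circ\gamma_s^k)+\hat\mu_s(\hat f_{s,\beta}\,\hat f_{s,\alpha}\circ\gamma_s^k)\bigr)$, which is the $(\alpha,\beta)$ entry of the symmetric matrix $\hat\Sigma_s$ by its defining Cesàro limit. The accumulated error on $P_n$ is $En^2a_n^{2+\eta'}$ just as in the scalar case, with $|E|$ bounded uniformly in $s$ because $\hat\varrho_s\in\cD_{L_*}$.

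Integrating over $s$, extending back to $[0,t]$ at cost $Ea_n$, and collecting the errors yields $|(\Sigma_{n,t})_{\alpha\beta}-(\Sigma_t)_{\alpha\beta}|\le E(n^{-1+2\kappa}+n^{2\kappa-\eta'(1-\kappa)})$; choosing $\eta'=(\eta+\eta'')/2$ and $\kappa=(\eta-\eta'')/(4(1+\eta))$ exactly as in Lemma \ref{variance distance lemma} bounds this by $Cn^{-\eta''}$ with $C=C(\lambda,A_*,C_H,\eta,\eta'',L_0,\|f\|_{\operatorname{Lip}})$. Since $|\Sigma_{n,t}-\Sigma_t|=\max_{\alpha,\beta}|(\Sigma_{n,t})_{\alpha\beta}-(\Sigma_t)_{\alpha\beta}|$ and the bound is uniform over the finitely many pairs $(\alpha,\beta)$, the claim follows. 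The only genuinely new bookkeeping compared with the scalar proof is tracking which coordinate the transfer operators act on in the $r>s$ versus $r<s$ halves and verifying that the two halves reassemble into the symmetrized covariance $(\hat\Sigma_s)_{\alpha\beta}$; I expect this to be the main — though minor — point to get right, everything else being verbatim from Lemma \ref{variance distance lemma}.
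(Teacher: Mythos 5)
Your proposal is correct and matches the paper's intent exactly: the paper omits this proof, stating only that it is "similar" to Lemma \ref{variance distance lemma}, and your entrywise adaptation of that scalar argument is precisely the intended route. You also correctly identify and resolve the one genuinely new point — that the $r>s$ and $r<s$ halves of the near-diagonal strip produce $\hat\mu_s(\hat f_{s,\alpha}\,\hat f_{s,\beta}\circ\gamma_s^{k})$ and $\hat\mu_s(\hat f_{s,\beta}\,\hat f_{s,\alpha}\circ\gamma_s^{k})$ respectively, which reassemble into the $(\alpha,\beta)$ entry of the symmetric outer-product definition of $\hat\Sigma_s$.
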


The upper bound on
$
\left|\mu(h(\xi_{n}(t)))-\Phi_{\Sigma_{t}}(h)\right|
$
is found by bounding the following four terms: 
\beq
\left|\mu\left[h(\xi_{n}(t))\right]- \mu\left[h\left(\xi_{n}\left(\lceil nt\rceil/n\right)\right)\right] \right|,\label{eq:multiv 1}
\eeq

\beq
\left|\mu\left[h\left(\xi_{n}\left(\lceil nt\rceil/n\right)\right)\right] - \Phi_{\Sigma_{n,\lceil nt\rceil/n}}(h)\right|,\label{eq:multiv 2}
\eeq

\beq
\left|\Phi_{\Sigma_{n,\lceil nt\rceil/n}}(h)-\Phi_{\Sigma_{\lceil nt\rceil/n}}(h)\right|,\label{eq:multiv 3}
\eeq
and 
\beq
\left|\Phi_{\Sigma_{\lceil nt\rceil/n}}(h)-\Phi_{\Sigma_{t}}(h)\right|.\label{eq:multiv 4}
\eeq
The proof is analogous to the proof of Theorem \ref{pre quasitheorem}. Bounding \eqref{eq:multiv 1} corresponds to Step 1, \eqref{eq:multiv 2} and \eqref{eq:multiv 3} to Step 2, and \eqref{eq:multiv 4} to Step 3. However, computing with matrices introduces some complications that need to be dealt with more closely. We therefore first introduce some matrix-related notations. 

If a matrix $M$ is positive semi-definite, we denote $M\ge 0$, and if it is positive definite, $M>0$. The minimal eigenvalue of $M$ is denoted by $\lambda_{1}(M)$. Recall that $\lambda_{1}(M)\ge 0$ or $\lambda_{1}(M)>0$ if and only if $M\ge 0$ or $M>0$, respectively.

Let $v\in \bR^{d}$. We denote its Euclidean norm by $|v|_{d}$ and for a $d\times d$ matrix $M$, we write
\beq
\|M\|=\sup_{v\in \bR^{d}\setminus \{0\}}\frac{|Mv|_{d}}{|v|_{d}}\le d|M| \label{spectral norm}
\eeq
and call $\|M\|$ the spectral norm of $M$.

\textbf{Matrix fact.} Let $M$ be a positive definite matrix satisfying $\lambda_{1}(M)\ge C_{l}$ and $\|M\|\le C_{u}$ for some $0<C_{l}<C_{u}$. Then there exists $\delta= \delta(C_{l},C_{u})>0$ such that if $\tilde{M}$ is positive semi-definite and $|M-\tilde{M}|<\delta$, then $\lambda_{1}(\tilde{M})\ge C_{l}/2$ and especially $\tilde{M}$ is positive definite.

Let now $t_{0}\in \,]0,1]$ be such that $\hat{\Sigma}_{t_{0}}>0$. By Lemma \ref{Hölder continuity lemma multiv}, the entries of $\hat{\Sigma}_{t}$ vary Hölder continuously. Thus there exists a neighbourhood of $t_{0}$ such that if $t$ is in that neighbourhood, then $\hat{\Sigma}_{t}>0$. For all $t\in [0,1]$, we also have $\hat{\Sigma}_{t}\ge 0$, since covariance matrices are positive semi-definite. This guarantees that $\Sigma_{t}=\int_{0}^{t}\hat{\Sigma}_{s}ds>0$ for every $t\ge t_{0}$. To be more precise 
\beq
\lambda_{1}(\Sigma_{t})= \lambda_{1}\left(\Sigma_{t_{0}}+\int_{t_{0}}^{t}\hat{\Sigma}_{s}ds\right)\ge  \lambda_{1}(\Sigma_{t_{0}}) +\lambda_{1}\left(\int_{t_{0}}^{t}\hat{\Sigma}_{s}ds\right)\ge \lambda_{1}(\Sigma_{t_{0}}), \label{lambda eq}
\eeq 
when $t\ge t_{0}$.
\\   
\textbf{Bound on \eqref{eq:multiv 1}.}
As in the Step 1 of the proof of Theorem \ref{pre quasitheorem}, we have that \eqref{eq:multiv 1} is bounded by $Cn^{-\frac{1}{2}}$. 
\\
\textbf{Bound on \eqref{eq:multiv 2}.}
We are going to apply Theorem \ref{multivariate circle theorem} as in Step 2 of Theorem \ref{pre quasitheorem}. To this end  $\Sigma_{n,t}$ must be positive definite. As will be apparent later it is crucial that we can choose $n_{0}$ independent of $t$ such that for every $n\ge n_{0}$ and $t\ge t_{0}$ we have $\Sigma_{n,t}>0$.

By \eqref{lambda eq}, in the set $\{\Sigma_{t}:t\ge t_{0}\}$ there exists an uniform bound to $\lambda_{1}(\Sigma_{t})$ namely $\lambda_{1}(\Sigma_{t_{0}})$. By Lemma \ref{Hölder continuity lemma multiv} the entries of $\Sigma_{t}$ are also uniformly bounded. Thus by Matrix fact and Lemma \ref{covariance distance lemma} there exists $n_{0}$ independent of $t$ such that $\Sigma_{n,t}>0$ for every $n\ge n_{0},t\ge t_{0}$. Choose $n_{0}$ such that also $ n_{0}t_{0} \ge 16/(1-\vartheta)^{2}$. We will bound \eqref{eq:multiv 2} by first applying Theorem \ref{multivariate circle theorem} to $\frac{1}{\sqrt{\lceil nt\rceil}}\sum_{i=0}^{\lceil nt \rceil -1} \bar{f}_{n,i}$. Denote
\beqn
V=V(\lceil nt\rceil)= \frac{1}{\sqrt{\lceil nt\rceil}}\sum_{i=0}^{\lceil nt \rceil -1} \bar{f}_{n,i}=\dfrac{\sqrt{n}}{ \sqrt{\lceil nt\rceil}}\xi_{n}\left(\frac{\lceil nt \rceil}{n}\right).
\eeqn
Now the covariance matrix $\Sigma_{V}$ of $V(\lceil nt\rceil)$ is $\dfrac{n}{\lceil nt\rceil}\Sigma_{n,\lceil nt\rceil/n}$ and thus positive definite, when $n\ge n_{0}$ and $t\ge t_{0}$.

Let $h\colon \bR^{d}\rightarrow \bR$ be as in the theorem. Define $h^{*}\colon \bR^{d}\rightarrow \bR$, $w\mapsto h\left(\dfrac{\sqrt{\lceil nt\rceil}}{\sqrt{n}}w\right)$. Thus we have $\mu(h^{*}(V))=\mu(h(\xi_{n}(\lceil nt\rceil/n)))$ and $\| D^{k}h^{*}\|_{\infty}\le \| D^{k}h\|_{\infty}<\infty$. We have
$
 \Phi_{\Sigma_{n,\lceil nt\rceil/n}}(h) 
=  \Phi_{\Sigma_{V}}(h^{*}).
$
Since $\lceil nt \rceil\ge 16/(1-\vartheta)^{2} $, we can apply
 Theorem \ref{multivariate circle theorem} to $V(\lceil nt \rceil)$ and it yields 

\begin{align*}
\left| \mu(h(\xi_{n}(\lceil nt\rceil/n))) - \Phi_{\Sigma_{n,\lceil nt\rceil/n}}(h) \right|
=\left|\mu(h^{*}(V)) - \Phi_{\Sigma_{V}}(h^{*})\right| \le Cn^{-\frac{1}{2}}\log n.
\end{align*}

\textbf{Bound on \eqref{eq:multiv 3}.}
Let $Z\sim \cN(0,I_{d\times d})$, where $\cN(0,I_{d\times d})$ is a standard $d$-dimensional normal distribution. 
If $\Sigma$ is a positive definite $d\times d$ matrix, then it has unique positive definite square root matrix $\Sigma^{1/2}$. Furthermore
\beq
 \Phi_{\Sigma}(h) = \bE[h(\Sigma^{1/2}Z)]. \label{square root matrix}
\eeq
 We define
\beqn
\operatorname{Lip}_{d}(h)=\sup_{x,y\in\bR^d, x\neq y}\frac{|h(x)-h(y)|}{|x-y|_{d}}.
\eeqn
\noindent With these definitions
\begin{align}
\begin{split}
\left|\bE\left[h\left(\Sigma_{n,\lceil nt\rceil/n}^{1/2}Z\right)\right] - \bE\left[h\left(\Sigma_{\lceil nt\rceil/n}^{1/2}Z\right)\right]\right| &\le \operatorname{Lip}_{d}(h) \bE\left|\Sigma_{n,\lceil nt\rceil/n}^{1/2}Z-\Sigma_{\lceil nt\rceil/n}^{1/2}Z\right|_{d} \label{spectral bound}
\\
&\le \operatorname{Lip}_{d}(h)\bE|Z|_{d}\, \left\|\Sigma_{n,\lceil nt\rceil/n}^{1/2}-\Sigma_{\lceil nt\rceil/n}^{1/2}\right\|.\end{split}
\end{align}
The following bound holds for the spectral norm of the difference of two square root matrices (see \cite{Schmitt_1992})
\beq
\left\|\Sigma_{n,\lceil nt\rceil/n}^{1/2}-\Sigma_{\lceil nt\rceil/n}^{1/2}\right\| \le \frac{ \left\|\Sigma_{n,\lceil nt\rceil/n}-\Sigma_{\lceil nt\rceil/n}\right\|}{\sqrt{\lambda_1\left(\Sigma_{n,\lceil nt\rceil/n}\right)}+\sqrt{\lambda_1\left(\Sigma_{\lceil nt\rceil/n}\right)}}.\label{eq:square root bound}
\eeq
Now \eqref{square root matrix}, \eqref{spectral bound}, \eqref{eq:square root bound} and \eqref{spectral norm} yield 
\beq
\left|\Phi_{\Sigma_{n,\lceil nt\rceil/n}}(h)-\Phi_{\Sigma_{\lceil nt\rceil/n}}(h)\right|\le  \operatorname{Lip}_{d}(h)\bE|Z|_{d}\frac{ d\left|\Sigma_{n,\lceil nt\rceil/n}-\Sigma_{\lceil nt\rceil/n}\right|}{\sqrt{\lambda_1\left(\Sigma_{n,\lceil nt\rceil/n}\right)}+\sqrt{\lambda_1\left(\Sigma_{\lceil nt\rceil/n}\right)}}. \label{almost final estimate}
\eeq

Let $\eta''< \eta$. By Lemma \ref{covariance distance lemma} we have $\left|\Sigma_{n,\frac{\lceil nt\rceil}{n}}-\Sigma_{\frac{\lceil nt\rceil}{n}}\right|\le Cn^{-\eta''}$. The other terms on the right side of inequality \eqref{almost final estimate} are uniformly bounded. Thus 
\beqn
\left|\Phi_{\Sigma_{n,\lceil nt\rceil/n}}(h)-\Phi_{\Sigma_{\lceil nt\rceil/n}}(h)\right|\le Cn^{-\eta''}.
\eeqn
\\
\textbf{Bound on \eqref{eq:multiv 4}.} Following the same steps as in the previous calculation, we have 
\beq
\left|\Phi_{\Sigma_{\lceil nt\rceil/n}}(h)-\Phi_{\Sigma_{t}}(h)\right|\le  \operatorname{Lip}_{d}(h)\bE|Z|_{d}\frac{ d\left|\Sigma_{\lceil nt\rceil/n}-\Sigma_{t}\right|}{\sqrt{\lambda_1\left(\Sigma_{\lceil nt\rceil/n}\right)}+\sqrt{\lambda_1\left(\Sigma_{t}\right)}}. \label{almost final estimate2}
\eeq
Since $|\hat{\Sigma}_{t}|$ is clearly uniformly bounded, we have the uniform estimate
\beqn
\left|\Sigma_{\lceil nt\rceil/n}-\Sigma_{t}\right|=\left|\int_{t}^{\lceil nt\rceil/n}\hat{\Sigma}_{s}ds\right| \le Cn^{-1}.
\eeqn
This and \eqref{almost final estimate2} yields the bound
\beqn
\left|\Phi_{\Sigma_{\lceil nt\rceil/n}}(h)-\Phi_{\Sigma_{t}}(h)\right|\le Cn^{-1}.
\eeqn

\textbf{Bound on $\left|\mu(h(\xi_{n}(t)))-\Phi_{\Sigma_{t}}(h)\right|$.} There exist $n_{0}$ such that for all $n\ge n_{0}$ the bounds of \eqref{eq:multiv 1}, \eqref{eq:multiv 2}, \eqref{eq:multiv 3} and \eqref{eq:multiv 4} computed above hold and thus we have 
\beq
\left|\mu(h(\xi_{n}(t)))-\Phi_{\Sigma_{t}}(h)\right|\le Cn^{-\frac{1}{2}}+ Cn^{-\frac{1}{2}}\log n +Cn^{-\eta''}+Cn^{-1}\le Cn^{-\frac{1}{2}}\log n +Cn^{-\eta''}\label{final bound}
\eeq
for every $n\ge n_{0}$.
It is easy to choose large enough $C$ so that \eqref{final bound} holds also when $1\le n\le n_{0}$.
This completes the proof of Theorem \ref{multivariate quasitheorem}. \qed

\section{Proofs of abstract results}\label{main proofs}
This section assumes familiarity with \cite{HellaLeppanenStenlund_2016}. However, accepting certain results given, we have made an effort to provide a comprehensible proof.

Recall that the goal of Theorem \ref{thm:main} is to control the term $|\mu(h(W)) - \Phi_{\Sigma_{N}}(h)|$.

First, assuming that the matrix~\(\Sigma_{N} \in \bR^{d\times d}\) is positive definite, the normal distribution $\cN(0,\Sigma_{N})$ has a density function $\phi_{\Sigma_{N}}$, and we define
\begin{align*}
&A(w)= -\int_0^{\infty} \! \left\{\int_{\bR^d} h(e^{-s}w + \sqrt{1 - e^{-2s}}\,z)\,\phi_{\Sigma_{N}}(z)\,dz - \Phi_{\Sigma_{N}}(h)\right\} \, ds,
\end{align*}
where $\Phi_{\Sigma_{N}}(h)$ stands for the expectation of $h$ with respect to the same normal distribution.
Furthermore, if  \(h: \, \bR^d \to \bR\) is three times differentiable with \(\Vert D^k h \Vert_{\infty} < \infty\) for \(1 \le k \le 3\), then \(A \in C^3(\bR^d,\bR)\), and it solves the Stein equation
\begin{align}\label{eq:steinmv}
\tr  \Sigma D^2A(w) - w \cdot  \nabla A(w) = h(w) - \Phi_{\Sigma}(h)
\end{align}
We refer to \cite{Barbour_1990,Gotze_1991,GoldsteinRinott_1996, Gaunt_2016} or Lemma 3.3 in \cite{HellaLeppanenStenlund_2016}. Moreover
$
\| \partial_1^{t_1}\cdots \partial_d^{t_d} A \|_\infty \le k^{-1} \| \partial_1^{t_1}\cdots \partial_d^{t_d} h \|_\infty
$
whenever $t_1+\cdots+t_d = k$, $1\le k\le 3$.

Thus in \eqref{eq:steinmv}, taking the expected value with respect to $\mu$, we have
\beqn
|\mu [h(W)] - \Phi_{\Sigma}(h)| = |\mu[\tr  \Sigma D^2A(W) - W \cdot  \nabla A(W)]|.
\eeqn
It turns out that the expression on the right side is easier to bound than the left, which is the core of Stein's method.

Instead of proving Theorem \ref{thm:main} directly, we prove the following preliminary result.
\begin{thm}\label{thm:pre} Let $(X,\cB,\mu)$ be a probability space and $(f^{i})_{i=0}^{\infty}$ a sequence of random vectors with common upper bound $\|f\|_{\infty}$. Let $A\in C^3(\bR^d,\bR)$ be a given function satisfying \(\| D^k A \|_{\infty} < \infty\) for \(1 \le k \le 3\). Fix integers $N>0$ and $0\le K< N$.  Suppose that the following conditions are satisfied:

\begin{itemize}

\item[(A1)] There exist constants $C_2 > 0$ and $C_4 > 0$, and a non-increasing function \(\rho : \, \bN_0 \to \bR_+\) with \(\rho(0) = 1\) and \( \sum_{i=1}^{\infty} i\rho(i) < \infty\), such that for all \(0\le i \le j \le k \le l \le N-1\),
\begin{align*}
|\mu(\bar{f}^{i}_{\alpha}\bar{f}^{j}_{\beta})| \leq C_2\rho(j-i),
\end{align*}
\begin{align*}
&|\mu(\bar{f}^{i}_{\alpha}\bar{f}^{j}_{\beta}\bar{f}^{k}_{\gamma}\bar{f}^{l}_{\delta})| \le C_4\rho(\max\{j-i,l-k\}), \\
&|\mu(\bar{f}^{i}_{\alpha}\bar{f}^{j}_{\beta}\bar{f}^{k}_{\gamma}\bar{f}^{l}_{\delta}) - \mu(\bar{f}^{i}_{\alpha}\bar{f}^{j}_{\beta})\mu(\bar{f}^{k}_{\gamma}\bar{f}^{l}_{\delta})| \le C_4\rho(k-j).
\end{align*}
hold whenever $k\ge 0$; $0\le i\le j \le k \le l < N$; $\alpha,\beta,\gamma,\delta\in\{\alpha',\beta'\}$ and $\alpha',\beta'\in\{1,\dots,d\}$.

\smallskip
\item[(A2')] There exists a function \(\eta: \, \bN_0^2 \to \bR_+\) such that
\begin{align*}
\left|\mu\!\left(\frac{1}{\sqrt{N}}\sum_{n=0}^{N-1} \bar{f}^n \cdot  \nabla A(W_{n})\right)\right|  \leq \eta(N,K).
\end{align*}


\end{itemize}
Then
\begin{align*}
&|\mu(\tr \Sigma_{N} D^2A(W) - W \cdot  \nabla A(W))| \\
&\qquad \le 2d^3 C_2\Vert f \Vert_{\infty} \Vert D^3 A \Vert_{\infty}\frac{2K+1}{\sqrt{N}} \left(\rho(0) + 2\sum_{i = 1}^{2K} \rho(i)\right)
 \\
&\qquad\quad+ 2d^2C_2\Vert D^2A\Vert_\infty \sum_{i=K+1}^{N-1}\rho(i)
\\
&\qquad\quad+ 11d^2\max\{C_2,\sqrt{C_4}\} \Vert D^2 A\Vert_{\infty}\frac{\sqrt{K+1}}{\sqrt{N}}\sqrt{\sum_{i=0}^{N-1} (i+1)\rho(i)}\, \\
&\qquad\quad+ \eta(N,K).
\end{align*}
\end{thm}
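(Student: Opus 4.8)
The plan is to decompose the target quantity $\mu(\tr \Sigma_N D^2 A(W) - W\cdot\nabla A(W))$ into three pieces and bound each separately. The natural split is
\begin{align*}
\mu(\tr \Sigma_N D^2 A(W)) - \mu(W\cdot\nabla A(W))
&= \underbrace{\mu\!\left(\frac{1}{N}\sum_{n}\sum_{i\in[n]_K} \bar f^i \cdot \nabla\!\left(\bar f^n\cdot\nabla A(W)\right)\right) - \mu\!\left(\frac1N\sum_n\sum_{i\in[n]_K}(\bar f^i\otimes\bar f^n):D^2A(W)\right)}_{\text{(I)}}\\
&\quad + \underbrace{\mu\!\left(\frac1N\sum_n\sum_{i\in[n]_K}(\bar f^i\otimes\bar f^n):D^2A(W) \right) - \mu(\tr\Sigma_N D^2 A(W))}_{\text{(II) --- tail of covariances}}\\
&\quad - \underbrace{\mu\!\left(\frac{1}{\sqrt N}\sum_n \bar f^n\cdot\nabla A(W_n)\right)}_{\text{(III) }=\ \eta(N,K)\text{ by (A2')}},
\end{align*}
where I have used $W = \frac{1}{\sqrt N}\sum_i \bar f^i$ and rewritten $W\cdot\nabla A(W) = \frac{1}{\sqrt N}\sum_n \bar f^n\cdot\nabla A(W)$, then inserted and subtracted $\nabla A(W_n)$ to peel off term (III). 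The point of introducing $W_n$ (which omits the block $[n]_K$) is that $\bar f^n$ and $W_n$ decorrelate in the sense of (A2'); the difference $\nabla A(W) - \nabla A(W_n)$ is controlled by a first-order Taylor expansion along the segment from $W_n$ to $W$, whose increment is $\frac{1}{\sqrt N}\sum_{i\in[n]_K}\bar f^i$.

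For term (I), I would Taylor-expand $\nabla A(W)$ around $W_n$ to first order: $\partial_\alpha A(W) = \partial_\alpha A(W_n) + \sum_\beta \partial_\beta\partial_\alpha A(W_n)\cdot\frac{1}{\sqrt N}\sum_{i\in[n]_K}\bar f^i_\beta + (\text{second-order remainder})$. Multiplying by $\frac{1}{\sqrt N}\sum_n\bar f^n$ and taking expectations, the zeroth-order term reproduces (III), the first-order term produces $\frac1N\sum_n\sum_{i\in[n]_K}\mu(\bar f^i_\beta\bar f^n_\alpha \partial_\beta\partial_\alpha A(W_n))$, and the remainder is bounded by $\|D^3A\|_\infty$ times a triple product of $\bar f$'s summed over the block $[n]_K$; the latter gives the first line of the claimed bound (the factor $\frac{2K+1}{\sqrt N}(\rho(0)+2\sum_{i=1}^{2K}\rho(i))$ comes from counting: there are $\le N(2K+1)$ choices of $(n,i)$ with $i\in[n]_K$, each contributes $\|f\|_\infty^3\|D^3A\|_\infty N^{-3/2}$ up to the correlation weight from (A1), and one more block-sum of size $\le 2K+1$ over a third index). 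The surviving first-order term must be matched against $\mu(\frac1N\sum_n\sum_{i\in[n]_K}(\bar f^i\otimes\bar f^n):D^2A(W_n))$; replacing $D^2A(W)$ there by $D^2A(W_n)$ costs another $\|D^3A\|_\infty$-controlled error of the same shape, and then the two first-order pieces cancel exactly — modulo the fact that $\partial_\beta\partial_\alpha A(W_n)$ and $\bar f^i_\beta\bar f^n_\alpha$ are correlated, which is where (A1)'s four-point bounds enter.

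The remaining correlation terms — expressions like $\mu(\bar f^i_\beta\bar f^n_\alpha\, \partial_\beta\partial_\alpha A(W_n))$ where the product $\bar f^i\bar f^n$ is itself correlated with $W_n$ — are handled by a further decorrelation step: split $W_n$ into the part far from the block (which decorrelates from $\bar f^i\bar f^n$ via the four-point estimate $C_4\rho(k-j)$ in (A1)) plus a bounded-by-$\|f\|_\infty$ correction, and use that $\mu(\bar f^i_\beta\bar f^n_\alpha)$ summed appropriately reconstructs $\Sigma_N$ up to the tail $\sum_{i=K+1}^{N-1}\rho(i)$, which is exactly term (II). This is the step I expect to be the main obstacle: organizing the bookkeeping so that the Cauchy–Schwarz estimate on $\mu(\bar f^i_\beta\bar f^n_\alpha \cdot(\text{correlated part of }W_n))$ yields the $\sqrt{K+1}\,N^{-1/2}\sqrt{\sum(i+1)\rho(i)}$ factor with the constant $11d^2\max\{C_2,\sqrt{C_4}\}$ rather than something worse. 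Concretely one writes $W_n$'s relevant contribution as a sum over indices $m\notin[n]_K$ of $\bar f^m/\sqrt N$, pairs each $\bar f^i_\beta\bar f^n_\alpha\bar f^m_\gamma$ against $\partial_\gamma(\cdots)$, applies (A1) — the bound $C_4\rho(\cdot)$ when $m$ is far on the correct side, a cruder $\sqrt{C_4}\sqrt{\Sigma}$-type Cauchy–Schwarz when $m$ sits between — and sums the geometric-like weights. Throughout I would use $|v|=\max_\alpha|v_\alpha|$, bound $\|D^kA\|_\infty$ directly, and replace all products $\bar f^{i_1}\cdots\bar f^{i_r}$ by their (A1) bounds; the final step is to collect the four contributions, note the $\eta(N,K)$ term passes through untouched from (A2'), and check that the stated numerical constants ($2d^3C_2$, $2d^2C_2$, $11d^2\max\{C_2,\sqrt{C_4}\}$) absorb all the combinatorial factors from the index counting and the coordinate sums.
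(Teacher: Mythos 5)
Your plan follows essentially the same route as the paper: the identical three-way split into a Taylor/decorrelation term controlled by $\Vert D^3A\Vert_\infty$ and the block count $2K+1$, a covariance term that splits into the tail $\sum_{i=K+1}^{N-1}\rho(i)$ plus the Cauchy--Schwarz four-point estimate producing the $11d^2\max\{C_2,\sqrt{C_4}\}\sqrt{K+1}/\sqrt{N}$ factor, and the term handled directly by (A2'); the paper likewise only sketches the two hard estimates, importing them verbatim from Proposition 4.3 and Lemma 4.6 of \cite{HellaLeppanenStenlund_2016}. One caveat: your displayed decomposition is not literally an exact identity as written (interpreting $\nabla(\bar f^n\cdot\nabla A(W))$ as $D^2A(W)\bar f^n$ makes the two summands of (I) coincide, and the sign of (II) is reversed), but your surrounding prose makes the intended, correct decomposition clear.
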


\subsection{Proof of Theorem~\ref{thm:pre}}\label{sec:proof_pre}
Let the Assumptions of Theorem \ref{thm:pre} hold. By basic matrix computations we see that $\mu(\tr \Sigma_{N} D^2A(W) - W \cdot  \nabla A(W))$ can be represented as a sum

\begin{align}
& \mu\!\left( \frac{1}{\sqrt{N}} \sum_{n=0}^{N-1}  D^2A(W)(W-W_{n})  -  \bar{f}^n \cdot (  \nabla A(W) -  \nabla A(W_{n})) \right)  \label{eq1}\\
+\, &\mu\!\left( \tr \left(\left(  \Sigma_{N} - \frac{1}{N}\sum_{n=0}^{N-1}\sum_{m \in [n]_K} \bar{f}^n \otimes \bar{f}^m \right)\! D^2A(W) \right)\right) \label{eq3}\\
+\, &\mu\!\left(\frac{1}{\sqrt{N}}\sum_{n=0}^{N-1}-\bar{f}^n \cdot  \nabla A(W_{n})\right)  \label{eq2}.
\end{align}
By Assumption~(A2'), the absolute value of \eqref{eq2} is bounded by $\eta(N,K)$. Bounds for the absolute values of \eqref{eq1} and \eqref{eq3} are stated in Propositions ~\ref{prop1} and ~\ref{prop3}, respectively.

Proposition \ref{prop1} is proved exactly as Proposition 4.3 in \cite{HellaLeppanenStenlund_2016} and the proof is thus omitted. The only difference is that every $\|\bar{f}^{i}\|_{\infty}$ is bounded above by $2 \|f\|_{\infty}$ which explains the coefficient $2$ in the bound.
\begin{prop}\label{prop1}
The absolute value of expression in~\eqref{eq1} is bounded by
\begin{align*}
2d^3 C_2\Vert f \Vert_{\infty} \Vert D^3 A \Vert_{\infty}\frac{2K+1}{\sqrt{N}} \left(\rho(0) + 2\sum_{i = 1}^{2K} \rho(i) \right).
\end{align*}
 \end{prop}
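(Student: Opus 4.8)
The plan is to recognise the summand of \eqref{eq1} as a pure third-order Taylor remainder and then to bound its $\mu$-expectation using only the pair-correlation estimate in (A1). Writing out \eqref{eq1} with $W=\frac1{\sqrt N}\sum_{n=0}^{N-1}\bar f^n$ and $W-W_n=\frac1{\sqrt N}\sum_{i\in[n]_K}\bar f^i$ (the first term in \eqref{eq1} carries the contraction $\bar f^n\cdot$, coming from $\operatorname{tr}(\frac1N\sum_{m\in[n]_K}\bar f^n\otimes\bar f^m\,D^2A(W))$), the quantity to be bounded is
\[
\mu\!\left(\frac1{\sqrt N}\sum_{n=0}^{N-1}\bar f^n\cdot\Bigl[D^2A(W)\,(W-W_n)-\bigl(\nabla A(W)-\nabla A(W_n)\bigr)\Bigr]\right).
\]
First I would fix $n$ and apply Taylor's formula with integral remainder to $s\mapsto\partial_\alpha A\bigl(W_n+s(W-W_n)\bigr)$ on $[0,1]$, for each coordinate $\alpha$. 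Differentiating twice and bounding all third partial derivatives by $\|D^3A\|_\infty$ shows that the $\alpha$-th component of the bracketed vector has absolute value at most $\|D^3A\|_\infty\bigl(\sum_\beta|(W-W_n)_\beta|\bigr)^2$ pointwise. Combined with $\|\bar f^n\|_\infty\le 2\|f\|_\infty$ this bounds the $n$-th summand pointwise by $2d\,\|f\|_\infty\|D^3A\|_\infty\bigl(\sum_\beta|(W-W_n)_\beta|\bigr)^2$.

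Next, taking the expectation and using Cauchy--Schwarz coordinatewise,
\[
\mu\Bigl(\bigl(\sum_\beta|(W-W_n)_\beta|\bigr)^{2}\Bigr)\le\Bigl(\sum_\beta\mu\bigl((W-W_n)_\beta^{2}\bigr)^{1/2}\Bigr)^{2},
\]
so everything reduces to the second-moment bound for the coordinates of $W-W_n$. By the first inequality in (A1) (applied also with the roles of the indices swapped),
\[
\mu\bigl((W-W_n)_\beta^{2}\bigr)=\frac1N\sum_{i,j\in[n]_K}\mu(\bar f^i_\beta\bar f^j_\beta)\le\frac{C_2}{N}\sum_{i,j\in[n]_K}\rho(|i-j|)\le\frac{C_2(2K+1)}{N}\Bigl(\rho(0)+2\sum_{i=1}^{2K}\rho(i)\Bigr),
\]
using $|[n]_K|\le 2K+1$, the fact that all index differences within $[n]_K$ lie in $\{0,\dots,2K\}$, and monotonicity of $\rho$. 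This estimate is uniform in $\beta$, so the Cauchy--Schwarz step contributes a factor $d^2$; multiplying by $2d\|f\|_\infty\|D^3A\|_\infty$ and summing over the $N$ values of $n$ converts the prefactor $\frac1N$ into $\frac1{\sqrt N}$, producing exactly the claimed bound.

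The Taylor estimate and the bookkeeping of the powers of $d$ are routine, so I expect no real obstacle; the one genuinely substantive ingredient is the second-moment bound for $(W-W_n)_\beta$, which is precisely where Assumption (A1) and the window length $2K+1$ enter, and also the one place where it matters that $W-W_n$ is a \emph{short} sum rather than a full Birkhoff sum. This is the argument of Proposition~4.3 of \cite{HellaLeppanenStenlund_2016}, the sole modification being the use of $\|\bar f^i\|_\infty\le 2\|f\|_\infty$ in place of $\|\bar f^i\|_\infty\le\|f\|_\infty$, which accounts for the factor $2$ in the stated constant.
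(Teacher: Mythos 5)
Your argument is correct and is essentially the proof the paper points to: the paper omits the details, deferring to Proposition~4.3 of \cite{HellaLeppanenStenlund_2016} with the single change $\|\bar f^i\|_\infty\le 2\|f\|_\infty$, and your reconstruction (Taylor remainder controlled by $\|D^3A\|_\infty$, then the second moment of $(W-W_n)_\beta$ via the pair-correlation bound in (A1) over the window $[n]_K$) is exactly that argument, including the correct reading of the missing contraction $\bar f^n\cdot$ in the displayed form of \eqref{eq1}. All constants check out, so nothing further is needed.
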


The next proposition gives a bound on the absolute value of expression ~\eqref{eq3}.
\begin{prop}\label{prop3}
The absolute value of the expression in~\eqref{eq3} is bounded by
\beqn
\begin{split}
& 2 d^2C_2 \Vert D^2 A \Vert_{\infty}  \sum_{i = K+1}^{N-1}  \rho(i)
\\
+\ &
11d^2\max\{C_2,\sqrt{C_4}\} \| D^2 A\|_{\infty}\frac{\sqrt{K+1}}{\sqrt{N}}\sqrt{\sum_{i=0}^{N-1} (i+1)\rho(i)}.
\end{split}
\eeqn
\end{prop}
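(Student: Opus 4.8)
The plan is to split the matrix inside \eqref{eq3} into a deterministic "tail" part and a centred "fluctuation" part. Writing $\Sigma_N=\frac1N\sum_{n=0}^{N-1}\sum_{m=0}^{N-1}\mu[\bar f^n\otimes\bar f^m]$ and splitting the inner sum according to whether $m\in[n]_K$ or not, one gets
\[
\Sigma_N-\frac1N\sum_{n=0}^{N-1}\sum_{m\in[n]_K}\bar f^n\otimes\bar f^m \;=\; E+Y,
\]
where $E=\frac1N\sum_{n}\sum_{m\notin[n]_K,\,0\le m\le N-1}\mu[\bar f^n\otimes\bar f^m]$ is deterministic and $Y=\frac1N\sum_{n}\sum_{m\in[n]_K}\bigl(\mu[\bar f^n\otimes\bar f^m]-\bar f^n\otimes\bar f^m\bigr)$ has $\mu[Y]=0$. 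Since $\tr(MD^2A(W))=\sum_{\alpha,\beta}M_{\alpha\beta}(D^2A(W))_{\beta\alpha}$ and $\|(D^2A)_{\beta\alpha}\|_\infty\le\|D^2A\|_\infty$, it suffices to bound the $d^2$ entries $E_{\alpha\beta}$ and the $d^2$ quantities $\mu[Y_{\alpha\beta}(D^2A(W))_{\beta\alpha}]$.

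For the tail term I would use the pair bound in (A1) together with the monotonicity of $\rho$: for fixed $n$ there are at most two indices $m$ at each distance $>K$, so $|E_{\alpha\beta}|\le\frac1N\sum_n\sum_{|m-n|>K}C_2\rho(|m-n|)\le 2C_2\sum_{i=K+1}^{N-1}\rho(i)$. Summing the $d^2$ entries against $\|D^2A\|_\infty$ produces exactly the first summand $2d^2C_2\|D^2A\|_\infty\sum_{i=K+1}^{N-1}\rho(i)$.

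For the fluctuation term, set $\hat S_{\alpha\beta}=\frac1N\sum_{n}\sum_{m\in[n]_K}\bar f^n_\alpha\bar f^m_\beta$, so $Y_{\alpha\beta}=\mu[\hat S_{\alpha\beta}]-\hat S_{\alpha\beta}$. I would then estimate
\[
\bigl|\mu[Y_{\alpha\beta}(D^2A(W))_{\beta\alpha}]\bigr|
=\bigl|\mathrm{Cov}\bigl(\hat S_{\alpha\beta},(D^2A(W))_{\beta\alpha}\bigr)\bigr|
\le\|D^2A\|_\infty\,\mu\bigl[\,|Y_{\alpha\beta}|\,\bigr]
\le\|D^2A\|_\infty\sqrt{\Var(\hat S_{\alpha\beta})}
\]
by H\"older's and the Cauchy--Schwarz inequalities; crucially this uses only $\|D^2A\|_\infty$, and not $\|D^3A\|_\infty$, so it is consistent with the form of the claimed bound (a naive replacement $W\rightsquigarrow W_n$ inside the sum would instead cost $\|D^3A\|_\infty$). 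It then remains to prove the variance estimate
\[
\Var(\hat S_{\alpha\beta})\le C\,\frac{K+1}{N}\,\max\{C_2^2,C_4\}\sum_{i=0}^{N-1}(i+1)\rho(i),
\]
with $C$ small enough (one may take $C=121$) that summing over the $d^2$ entries yields the factor $11d^2$. One expands $\Var(\hat S_{\alpha\beta})=\frac1{N^2}\sum\mathrm{Cov}(\bar f^n_\alpha\bar f^m_\beta,\bar f^{n'}_\alpha\bar f^{m'}_\beta)$ over the admissible quadruples $(n,m,n',m')$ and classifies each quadruple by the order statistics of $\{n,m,n',m'\}$: when the pair $\{n,m\}$ lies to the left of $\{n',m'\}$ one applies the approximate-product bound of (A1) to the sorted quadruple, getting a factor $C_4\rho(\text{inner gap})$; in the remaining (interleaved/overlapping) cases one combines the absolute four-point bound with the pair bound and with the cancellation coming from $\mu[\bar f^i]=0$, and sums the resulting "Wick-type" pairing contributions $\mu[\bar f^n_\alpha\bar f^{n'}_\alpha]\mu[\bar f^m_\beta\bar f^{m'}_\beta]$ (and the crossed one) separately. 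Finally one adds the two contributions and tracks the numerical constants.

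\textbf{Main obstacle.} The delicate point is the variance estimate, exactly as in the corresponding step of \cite{HellaLeppanenStenlund_2016}. The windows $[n]_K$ and $[n']_K$ overlap once $|n-n'|\le 2K$, and even when they are disjoint the index $n$ appears in both factors of $\bar f^n_\alpha\bar f^m_\beta$; moreover the four-point conditions in (A1) only control the \emph{inner} gap $\rho(k-j)$ of a sorted quadruple, which on its own is too weak for quadruples in which the two pairs interleave with a shared or nearly shared index. Obtaining the \emph{linear} power $K+1$ (rather than $(K+1)^2$ or worse) and the moment $\sum_i(i+1)\rho(i)$ (rather than a $K$-inflated quantity) requires organizing the quadruple sum so that the large combinatorial multiplicities — pairs of span up to $K$ — are matched with small tails of $\rho$, and carefully accounting for the cancellations described above; this combinatorial bookkeeping is the technical heart of the proof.
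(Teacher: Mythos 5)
Your decomposition into the deterministic tail $E=\Sigma_N-\widetilde\Sigma$ (bounded entrywise by the pair-correlation part of (A1), giving $2d^2C_2\|D^2A\|_\infty\sum_{i=K+1}^{N-1}\rho(i)$) and the centred fluctuation $Y$ (bounded via H\"older and $L^1\le L^2$ by $\|D^2A\|_\infty\sqrt{\Var(\hat S_{\alpha\beta})}$, followed by the quadruple-sum variance estimate) is exactly the paper's proof: the tail term is its Lemma~\ref{lem5}, and the fluctuation term is its Lemma~\ref{lem4}, whose variance computation the paper simply imports from Lemma 4.6 of \cite{HellaLeppanenStenlund_2016}. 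Your sketch of that variance estimate, including the constant $121=11^2$ and the combinatorial classification of quadruples, is consistent with the cited argument, so the proposal is correct and follows the same route.
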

To prove Proposition ~\ref{prop3}, we first define
$
\widetilde\Sigma = \frac{1}{N}\sum_{n=0}^{N-1}\sum_{m \in [n]_K} \mu(\bar{f}^n \otimes \bar{f}^m).
$
Using this definition, we have the following upper bound on the absolute value of \eqref{eq3} 
\begin{align}
& \mu\!\left(\left| \tr \! \left(\left(  \widetilde\Sigma - \frac{1}{N}\sum_{n=0}^{N-1}\sum_{m \in [n]_K} \bar{f}^n \otimes \bar{f}^m \right)\! D^2A(W) \right)\right|\right) \label{eq4} \\
& + \| { \tr (( \Sigma_{N} - \widetilde\Sigma ) D^2A ) } \|_\infty. \label{eq5}
\end{align}

Bounds on \eqref{eq4} and \eqref{eq5} are given in the Lemmas \ref{lem4} and \ref{lem5}, respectively. 

The next lemma is proven exactly as Lemma 4.6 in \cite{HellaLeppanenStenlund_2016}. 
\begin{lem}\label{lem4}
The expression ~\eqref{eq4} satisfies the bound
\begin{align*}
& \mu\!\left(\left| \tr \! \left(\left(  \widetilde\Sigma - \frac{1}{N}\sum_{n=0}^{N-1}\sum_{m \in [n]_K} \bar{f}^n \otimes \bar{f}^m \right)\! D^2A(W) \right)\right|\right)
\\
\le\ &11d^2\max\{C_2,\sqrt{C_4}\}\| D^2 A\|_{\infty}\frac{\sqrt{K+1}}{\sqrt{N}}\sqrt{\sum_{i=0}^{N-1} (i+1)\rho(i)}.
\end{align*}

\end{lem}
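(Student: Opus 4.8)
The plan is to bound the integrand of \eqref{eq4} pointwise and then reduce the resulting $L^{1}$-estimate to a variance computation controlled by Assumption~(A1). For any $d\times d$ matrix $M$,
\[
\left|\tr\!\left(M\,D^{2}A(W)\right)\right|=\left|\sum_{\alpha,\beta=1}^{d}M_{\alpha\beta}\,\partial_{\beta}\partial_{\alpha}A(W)\right|\le \Vert D^{2}A\Vert_{\infty}\sum_{\alpha,\beta=1}^{d}|M_{\alpha\beta}|,
\]
which I would apply with $M=\widetilde\Sigma-\tfrac1N\sum_{n=0}^{N-1}\sum_{m\in[n]_{K}}\bar{f}^{n}\otimes\bar{f}^{m}$. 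Setting $Y_{\alpha\beta}=\tfrac1N\sum_{n=0}^{N-1}\sum_{m\in[n]_{K}}\bar{f}^{n}_{\alpha}\bar{f}^{m}_{\beta}$, the definition of $\widetilde\Sigma$ gives $M_{\alpha\beta}=\mu(Y_{\alpha\beta})-Y_{\alpha\beta}$, so $\mu(|M_{\alpha\beta}|)\le \Var(Y_{\alpha\beta})^{1/2}$ by Jensen's inequality. Thus \eqref{eq4} is at most $\Vert D^{2}A\Vert_{\infty}\sum_{\alpha,\beta}\Var(Y_{\alpha\beta})^{1/2}$, and since there are $d^{2}$ terms it suffices to establish the uniform bound $\Var(Y_{\alpha\beta})\le 121\max\{C_{2},\sqrt{C_{4}}\}^{2}\,\tfrac{K+1}{N}\sum_{i=0}^{N-1}(i+1)\rho(i)$.

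To prove this I would expand
\[
\Var(Y_{\alpha\beta})=\frac1{N^{2}}\sum_{n_{1},n_{2}=0}^{N-1}\ \sum_{\substack{m_{1}\in[n_{1}]_{K}\\ m_{2}\in[n_{2}]_{K}}}\Bigl(\mu(\bar{f}^{n_{1}}_{\alpha}\bar{f}^{m_{1}}_{\beta}\bar{f}^{n_{2}}_{\alpha}\bar{f}^{m_{2}}_{\beta})-\mu(\bar{f}^{n_{1}}_{\alpha}\bar{f}^{m_{1}}_{\beta})\,\mu(\bar{f}^{n_{2}}_{\alpha}\bar{f}^{m_{2}}_{\beta})\Bigr)
\]
and bound each summand by splitting on the relative order of the four time indices $n_{1},m_{1},n_{2},m_{2}$ (all four coordinate labels lie in $\{\alpha,\beta\}$, so (A1) applies). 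If the pairs $\{n_{1},m_{1}\}$ and $\{n_{2},m_{2}\}$ are order-separated, the summand is a genuine ``factorised'' difference and the second display of (A1) bounds it by $C_{4}\rho(g)$ with $g$ the separation between the pairs; if the pairs interleave, I would bound the four-point correlation by $C_{4}\rho(\slot)$ from the first display of (A1) and the product term by $C_{2}^{2}\rho(\slot)\rho(\slot)$ from the pair bound, and in every orientation the windows $[n_{i}]_{K}$ force one of the controlling arguments to be $\le K$. Summing over the four indices, one free position runs over $\{0,\dots,N-1\}$ (producing $N$, which cancels one of the two $1/N$ factors), a second position is confined to a window of length $\le 2K+1$ (producing the factor $K+1$), and the residual sum over the controlling gap produces the weighted series $\sum_{i}(i+1)\rho(i)$ — the linear weight appearing because, once the largest relevant gap $i$ is fixed, there are $O(i)$ admissible placements for the intermediate index, and the series converges by the hypothesis $\sum_{i}i\rho(i)<\infty$ in (A1). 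Dominating the $C_{4}$- and $C_{2}^{2}$-contributions by $\max\{C_{2},\sqrt{C_{4}}\}^{2}$ and tracking the multiplicities in the case split yields the stated variance bound with constant $121$.

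Taking square roots, summing over $\alpha,\beta\in\{1,\dots,d\}$ and multiplying by $\Vert D^{2}A\Vert_{\infty}$ then gives exactly the asserted bound on \eqref{eq4}. I expect the combinatorial bookkeeping in the middle step to be the real obstacle: the case analysis over the orderings of $n_{1},m_{1},n_{2},m_{2}$ is straightforward in outline, but one must verify in each configuration that the constraint $m_{i}\in[n_{i}]_{K}$ can be used to harvest exactly one factor $K+1$ while leaving a sum with the correct weight $(i+1)\rho(i)$, and that the total numerical constant does not exceed $121$. This is precisely the content of Lemma~4.6 of \cite{HellaLeppanenStenlund_2016}, whose proof transfers to the present setting unchanged.
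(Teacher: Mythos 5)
Your argument is correct and is essentially the paper's: the paper proves this lemma by observing it is identical to Lemma 4.6 of \cite{HellaLeppanenStenlund_2016}, and your reduction (trace bound, Cauchy--Schwarz to a variance of $Y_{\alpha\beta}$, expansion into fourth-order correlations controlled by (A1) with the window constraint harvesting the factor $K+1$) is precisely the mechanism of that proof. Nothing further is needed.
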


\begin{lem}\label{lem5}
The expression in~\eqref{eq5} satisfies the bound
\begin{align*}
 \| { \tr (( \Sigma_{N} - \widetilde\Sigma ) D^2A ) }  \|_\infty \le 2d^2C_2\Vert D^2A\Vert_\infty \sum_{i=K+1}^{N-1}\rho(i).
\end{align*} 
\end{lem}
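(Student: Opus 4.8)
The plan is to unwind the definitions of $\Sigma_{N}$ and $\widetilde\Sigma$ and reduce everything to an elementary counting estimate. First I would observe that $\Sigma_{N}-\widetilde\Sigma$ is exactly the average of the pair covariances over those index pairs that are \emph{not} within distance $K$ of one another, namely
\[
\Sigma_{N}-\widetilde\Sigma=\frac1N\sum_{\substack{0\le n,m\le N-1\\ |n-m|>K}}\mu(\bar f^{n}\otimes\bar f^{m}).
\]
Then, since $\tr\bigl((\Sigma_{N}-\widetilde\Sigma)D^{2}A(w)\bigr)=\sum_{\alpha,\beta=1}^{d}(\Sigma_{N}-\widetilde\Sigma)_{\alpha\beta}\,\partial_{\beta}\partial_{\alpha}A(w)$ for every $w\in\bR^{d}$, the supremum over $w$ (equivalently over the possible values of $W$) is at most $\|D^{2}A\|_{\infty}\sum_{\alpha,\beta}|(\Sigma_{N}-\widetilde\Sigma)_{\alpha\beta}|$, so it suffices to bound each matrix entry of $\Sigma_{N}-\widetilde\Sigma$ by $2C_{2}\sum_{i=K+1}^{N-1}\rho(i)$.

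For a fixed pair of coordinates $(\alpha,\beta)$ I would invoke the pair-correlation bound in Assumption (A1): after using the symmetry $\mu(\bar f^{n}_{\alpha}\bar f^{m}_{\beta})=\mu(\bar f^{m}_{\beta}\bar f^{n}_{\alpha})$ to reduce to ordered indices, this gives $|\mu(\bar f^{n}_{\alpha}\bar f^{m}_{\beta})|\le C_{2}\rho(|n-m|)$. Substituting and grouping the pairs by their common distance $i=|n-m|$, the key elementary fact is that for each $i$ with $1\le i\le N-1$ there are exactly $2(N-i)\le 2N$ ordered pairs $(n,m)\in\{0,\dots,N-1\}^{2}$ with $|n-m|=i$. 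Hence $|(\Sigma_{N}-\widetilde\Sigma)_{\alpha\beta}|\le \frac{C_{2}}{N}\sum_{i=K+1}^{N-1}2N\rho(i)=2C_{2}\sum_{i=K+1}^{N-1}\rho(i)$, and summing over the $d^{2}$ choices of $(\alpha,\beta)$ yields the claimed estimate.

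I do not expect a real obstacle here: the statement is essentially the observation that the ``tail'' of the double covariance sum is controlled by $\sum_{i>K}\rho(i)$, with the factor $2$ coming from counting both orderings $n-m=i$ and $m-n=i$, the factor $d^{2}$ from the entrywise $\|\cdot\|_{\infty}$ norm on matrices, and the fact that $2(N-i)/N\le 2$ absorbing the $1/N$ normalization. The only points deserving a word of care are (i) applying (A1) in the form valid for an arbitrary ordered pair of coordinates, which is legitimate since (A1) is stated for all $\alpha,\beta\in\{\alpha',\beta'\}$, and (ii) noting that the sum over $i$ genuinely stops at $N-1$, which is automatic because $|n-m|\le N-1$ whenever $0\le n,m\le N-1$.
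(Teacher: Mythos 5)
Your proposal is correct and follows essentially the same route as the paper: rewrite $\Sigma_N-\widetilde\Sigma$ as the normalized sum of covariances over pairs with $|n-m|>K$, bound the trace by $\|D^2A\|_\infty$ times the sum of the $d^2$ entrywise absolute values, and control each entry by $2C_2\sum_{i=K+1}^{N-1}\rho(i)$ using (A1). Your explicit counting of the $2(N-i)\le 2N$ ordered pairs at each distance $i$ just spells out the step the paper leaves implicit.
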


\begin{proof} 
By definitions we have
\begin{align*}
\Sigma_{N} -  \widetilde\Sigma =&\, \frac{1}{N}\sum_{n=0}^{N-1}\sum_{m=0}^{N-1}\mu(\bar{f}^n \otimes \bar{f}^m) -\frac{1}{N}\sum_{n=0}^{N-1}\sum_{m \in [n]_K} \mu(\bar{f}^n \otimes \bar{f}^m)
\\
=&\, \frac{1}{N}\sum_{n=0}^{N-1}\sum_{0\le m \le N-1, m \notin [n]_K} \mu(\bar{f}^n \otimes \bar{f}^m).
\end{align*}
Assumption~(A1) yields
\begin{align*}
|(\Sigma_{N} -  \widetilde\Sigma)_{\alpha\beta}| &= \frac{1}{N}\left|\sum_{n=0}^{N-1}\sum_{0\le m \le N-1, m \notin [n]_K} \mu(\bar{f}^n_\alpha \bar{f}^m_\beta)\right|
\\
&\le \frac{1}{N}\sum_{n=0}^{N-1}\sum_{0\le m \le N-1, m \notin [n]_K} C_{2}\rho(|n-m|)
\\
&\le 2C_{2}\sum_{i=K+1}^{N-1}\rho(i).
\end{align*}
Thus
\begin{align*}
&\| {\tr((\Sigma_{N} -  \widetilde\Sigma )D^2A) }\|_\infty 
 \le  \|D^2A\|_\infty \sum_{1\le\alpha,\beta\le d} |( \Sigma - \widetilde\Sigma)_{\alpha\beta}| \\
\le \,&\|D^2A\|_\infty\sum_{1\le\alpha,\beta\le d}2C_{2}\sum_{i=K+1}^{N-1}\rho(i)
 \\
\le \,&2d^2C_2\Vert D^2A\Vert_\infty \sum_{i=K+1}^{N-1}\rho(i).
\end{align*}
This finishes the proof of the lemma.
\end{proof}

Observe that Proposition~\ref{prop3} follows from Lemmas~\ref{lem4} and~\ref{lem5}. This completes the proof of Proposition~\ref{prop3}.

Proposition~\ref{prop1}, Proposition~\ref{prop3} and Assumption~(A2') now yield the bounds on ~\eqref{eq1}, ~\eqref{eq3} and ~\eqref{eq2}, respectively. This completes the proof of Theorem~\ref{thm:pre}.\qed

\subsection{Proof of Theorem \ref{thm:main}}
To be able to use Theorem \ref{thm:pre} in proving Theorem~\ref{thm:main}, we need to check that Assumption~(A2') is implied by the assumptions of Theorem~\ref{thm:main}. This follows from the next lemma, which is proven with exactly the same steps as Lemma 5.1 in \cite{HellaLeppanenStenlund_2016}. 
\begin{lem}
Under the conditions of Theorem~\ref{thm:main},
\beqn
\left|\mu\!\left(\frac{1}{\sqrt{N}}\sum_{n=0}^{N-1} \bar{f}^n \cdot  \nabla A(W^n)\right)\right|  \leq \sqrt N\tilde\rho(K).
\eeqn 
\end{lem}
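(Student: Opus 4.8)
The plan is to feed the integral representation of the Stein solution $A$ into the sum, reducing each summand $\mu(\bar f^n\cdot\nabla A(W_n))$ to an average of quantities of the form $\mu(\bar f^n\cdot\nabla h(v+W_nt))$ with $t\in[0,1]$ and $v\in\bR^d$, and then invoking Assumption~(A2) pointwise. (The $W^n$ in the statement is the object $W_n$ appearing in (A2).)

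First I would differentiate under the integral sign in the definition of $A$. Since $h$ is three times differentiable with $\|D^kh\|_\infty<\infty$ for $1\le k\le 3$, the gradient $\nabla A$ exists and
\[
\nabla A(w)=-\int_0^\infty e^{-s}\int_{\bR^d}\nabla h\bigl(e^{-s}w+\sqrt{1-e^{-2s}}\,z\bigr)\,\phi_{\Sigma_N}(z)\,dz\,ds,
\]
the outer integral being absolutely convergent because $\int_0^\infty e^{-s}\,ds=1$. Pairing with $\bar f^n$, evaluating at $W_n$, and taking the $\mu$-expectation, the integrand is dominated in absolute value by $2d\,\|f\|_\infty\,\|Dh\|_\infty\,e^{-s}$, uniformly in the phase-space variable and in $z$, and this is integrable in $s$; hence Fubini's theorem gives
\[
\mu\bigl(\bar f^n\cdot\nabla A(W_n)\bigr)=-\int_0^\infty e^{-s}\int_{\bR^d}\mu\Bigl(\bar f^n\cdot\nabla h\bigl(e^{-s}W_n+\sqrt{1-e^{-2s}}\,z\bigr)\Bigr)\phi_{\Sigma_N}(z)\,dz\,ds.
\]

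Next, for each fixed $s\ge 0$ and $z\in\bR^d$ I would apply Assumption~(A2) with the choices $t=e^{-s}\in[0,1]$ and $v=\sqrt{1-e^{-2s}}\,z\in\bR^d$, yielding $\bigl|\mu(\bar f^n\cdot\nabla h(v+W_nt))\bigr|\le\tilde\rho(K)$. Substituting this bound into the previous display and using $\int_{\bR^d}\phi_{\Sigma_N}(z)\,dz=1$ together with $\int_0^\infty e^{-s}\,ds=1$ gives $|\mu(\bar f^n\cdot\nabla A(W_n))|\le\tilde\rho(K)$ for every $0\le n\le N-1$. Summing over $n$ and dividing by $\sqrt N$ and using the triangle inequality,
\[
\left|\mu\!\left(\frac1{\sqrt N}\sum_{n=0}^{N-1}\bar f^n\cdot\nabla A(W_n)\right)\right|\le\frac1{\sqrt N}\sum_{n=0}^{N-1}\bigl|\mu(\bar f^n\cdot\nabla A(W_n))\bigr|\le\frac{N}{\sqrt N}\,\tilde\rho(K)=\sqrt N\,\tilde\rho(K),
\]
which is the claim; in particular this verifies Assumption~(A2') of Theorem~\ref{thm:pre} with $\eta(N,K)=\sqrt N\,\tilde\rho(K)$. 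The only place where anything needs to be checked is the interchange of the $\mu$-expectation with the $s$- and $z$-integrals, and since the integrand is dominated by the integrable majorant $2d\,\|f\|_\infty\,\|Dh\|_\infty\,e^{-s}$ this is immediate; there is no genuine obstacle in the argument.
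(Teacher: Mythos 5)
Your proof is correct and follows essentially the same route as the paper's (the paper defers to Lemma 5.1 of \cite{HellaLeppanenStenlund_2016}, which is exactly this argument: differentiate the Stein solution under the integral sign, recognize the integrand as $\mu(\bar f^n\cdot\nabla h(v+W_nt))$ with $t=e^{-s}$ and $v=\sqrt{1-e^{-2s}}\,z$, apply (A2), and integrate out $s$ and $z$). The domination by $2d\,\|f\|_\infty\|Dh\|_\infty e^{-s}$ justifying the interchanges is the right observation, so nothing is missing.
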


Now Assumption (A2') of Theorem~\ref{thm:pre} is satisfied by defining 
\beqn
\eta(N,K) = \sqrt N\tilde\rho(K).
\eeqn

For the purpose of computing the constant $C_{*}$ in the error bound of Theorem \ref{thm:main} we introduce the following lemma.
\begin{lem}
Let $\rho \colon \bN\rightarrow \bR$ be a function such that $0\le\rho(i)\le 1$ for all $i\in \bN$ and $\sum_{i=0}^{\infty}(i+1)\rho(i)< \infty$ Then
$
\left(\sum_{i=0}^{\infty}\rho(i)\right)^{2}\le 2\sum_{i=0}^{\infty}(i+1)\rho(i)
$
\end{lem}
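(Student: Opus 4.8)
The plan is to bound $\sum_{i=0}^{\infty}\rho(i)$ by an appropriate tail sum and then invoke Cauchy--Schwarz, exploiting the weight $(i+1)$. First I would write the sum as $\sum_{i=0}^{\infty}\rho(i) = \sum_{i=0}^{\infty}\sqrt{(i+1)\rho(i)}\cdot\frac{\sqrt{\rho(i)}}{\sqrt{i+1}}$ and apply Cauchy--Schwarz to get
\[
\left(\sum_{i=0}^{\infty}\rho(i)\right)^{2}\le \left(\sum_{i=0}^{\infty}(i+1)\rho(i)\right)\left(\sum_{i=0}^{\infty}\frac{\rho(i)}{i+1}\right).
\]
Since $0\le\rho(i)\le 1$, the second factor is at most $\sum_{i=0}^{\infty}\frac{1}{i+1}$, which unfortunately diverges, so this naive splitting does not quite close. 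Instead I would split more carefully, using $\rho(i)\le 1$ only in the second factor after first extracting one power of $\rho$: write $\sum_i \rho(i)=\sum_i \sqrt{\rho(i)}\cdot\sqrt{\rho(i)}$ and Cauchy--Schwarz gives $\left(\sum_i\rho(i)\right)^2\le\left(\sum_i (i+1)\rho(i)\right)\left(\sum_i \frac{\rho(i)}{i+1}\right)$ — same problem.

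The clean route, and the one I expect to work, is to avoid Cauchy--Schwarz entirely and argue by a direct summation-by-parts / rearrangement bound. Set $S=\sum_{i=0}^\infty \rho(i)$. Since $\rho$ need not be monotone here (only $0\le\rho(i)\le 1$), I would instead bound $S^2$ by writing $S^2=\sum_{i,j\ge 0}\rho(i)\rho(j)$ and, by symmetry, $S^2 = 2\sum_{i\le j}\rho(i)\rho(j) - \sum_i \rho(i)^2 \le 2\sum_{i\le j}\rho(i)\rho(j)$. Now for each fixed $j$, $\sum_{i=0}^{j}\rho(i)\rho(j)\le \rho(j)\sum_{i=0}^j 1 = (j+1)\rho(j)$, using $\rho(i)\le 1$. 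Summing over $j$ yields $\sum_{i\le j}\rho(i)\rho(j)\le \sum_{j=0}^\infty (j+1)\rho(j)$, hence $S^2\le 2\sum_{j=0}^\infty (j+1)\rho(j)$, which is exactly the claim.

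The main (and only) subtlety is the bookkeeping of the diagonal: one must be careful that $S^2 = 2\sum_{i<j}\rho(i)\rho(j)+\sum_i\rho(i)^2$, so that $S^2 = 2\sum_{i\le j}\rho(i)\rho(j) - \sum_i\rho(i)^2\le 2\sum_{i\le j}\rho(i)\rho(j)$; then the inner estimate $\rho(i)\le 1$ is applied for $0\le i\le j$. Everything else is routine, and the hypothesis $\sum_{i=0}^\infty (i+1)\rho(i)<\infty$ guarantees all sums converge absolutely so the rearrangements are justified.
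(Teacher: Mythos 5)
Your final argument is correct and is essentially the paper's own proof: the paper expands $S^2$ as the double sum $\sum_{i,j}\rho(i)\rho(j)$, groups the $2n+1$ terms with $\max\{i,j\}=n$, and bounds each by $\rho(n)$ using $\rho\le 1$, which is exactly your restriction to $i\le j$ with the inner bound $\rho(i)\le 1$. The preliminary Cauchy--Schwarz detour is correctly abandoned and does not affect the validity of the proof.
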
\label{sum lemma}
\begin{proof} We have
\beq
\left(\sum_{i=0}^{\infty}\rho(i)\right)^{2}=\sum_{i=0}^{\infty}\sum_{j=0}^{\infty}\rho(i)\rho(j).\label{double sum}
\eeq
 Let $n\in \bN$. In the sum on the right side of \eqref{double sum} there exist exactly $2n+1$ terms $\rho(i)\rho(j)$ such that $\max\{i,j\}=n$. The result now follows easily from rearranging the terms according to $\max\{i,j\}$ and noticing that $\rho(i)\rho(j)\le \rho(\max\{i,j\})$. 
\end{proof}
Taking square roots of the result in Lemma \ref{sum lemma}, we have
\beq
\sum_{i=0}^{\infty}\rho(i) \le\sqrt{2\sum_{i=0}^{\infty}(i+1)\rho(i)}. \label{sqrt of sum}
\eeq

Lemma 3.3 in \cite{HellaLeppanenStenlund_2016} and Theorem~\ref{thm:pre}, followed by elementary estimates and using \eqref{sqrt of sum}, now yield
\begin{align*}
 |\mu(h(W)) - \Phi_{\Sigma_{N}}(h)| 
&=|\mu(\tr \Sigma_{N} D^2A(W) - W \cdot  \nabla A(W))| \\
&\le 2d^3 C_2\Vert f \Vert_{\infty} \Vert D^3 A \Vert_{\infty}\frac{2K+1}{\sqrt{N}} \left(\rho(0) + 2\sum_{i = 1}^{2K} \rho(i)\right)
 \\
&\qquad+ 2d^2C_2\Vert D^2A\Vert_\infty \sum_{i=K+1}^{N-1}\rho(i)
\\
&\qquad+ 11d^2\max\{C_2,\sqrt{C_4}\} \Vert D^2 A\Vert_{\infty}\frac{\sqrt{K+1}}{\sqrt{N}}\sqrt{\sum_{i=0}^{N-1} (i+1)\rho(i)}\, \\
&\qquad+ \eta(N,K).
\\
& \le \frac 83 d^3 C_2\Vert f \Vert_{\infty} \Vert D^3 h \Vert_{\infty}\frac{K+1}{\sqrt{N}}\sum_{i = 0}^{\infty} \rho(i)
+ d^2C_2\Vert D^2 h\Vert_\infty \sum_{i=K+1}^{\infty}\rho(i)
\\
&\qquad+ \frac{11}{2}d^2\max\{C_2,\sqrt{C_4}\} \Vert D^2 h\Vert_{\infty}\frac{K+1}{\sqrt{N}}\sqrt{\sum_{i=0}^{\infty} (i+1)\rho(i)}\, \\
&\qquad+ \sqrt N\tilde\rho(K)
\\
\end{align*}
\begin{align*}
& \le \left(\frac{8\sqrt 2}{3}d^3 C_2\Vert f \Vert_{\infty} \Vert D^3 h \Vert_{\infty}+ 6d^2\max\{C_2,\sqrt{C_4}\} \Vert D^2 h\Vert_{\infty}\right)\frac{K+1}{\sqrt{N}}\sqrt{\sum_{i=0}^{\infty} (i+1)\rho(i)}
 \\
 &\qquad +d^2C_2\Vert D^2 h\Vert_\infty \sum_{i=K+1}^{\infty}\rho(i)
+\sqrt N\tilde\rho(K)
\\
& \le C_*\left(\frac{K+1}{\sqrt{N}}+\sum_{i=K+1}^{\infty}\rho(i)\right) + \sqrt N\tilde\rho(K),
\end{align*}
where
\beqn
C_* = 6d^3\max\{C_2,\sqrt{C_4}\}\left(\Vert f \Vert_{\infty} \Vert D^3 h \Vert_{\infty}+ \Vert D^2 h\Vert_{\infty}\right)\sqrt{\sum_{i=0}^{\infty} (i+1)\rho(i)}.
\eeqn
The proof of Theorem~\ref{thm:main} is now complete. \qed

\subsection{Proof of Theorem \ref{main thm 1d}}
The Stein equation in the univariate case is
\beq
\sigma^2 A'(w) - wA(w) = h(w) - \Phi_{\sigma^2}(h).\label{Stein 1d}
\eeq
The following theorem is proven as Theorem 4.2 in \cite{HellaLeppanenStenlund_2016} with the same modifications as in the proof of Theorem \ref{thm:pre}.

\begin{thm}\label{pre-result 1-D}
Let $(X,\cB,\mu)$ be a probability space and $(f^{i})_{i=0}^{\infty}$ a sequence of random variables with common upper bound $\|f\|_{\infty}$. Let $A\in C^1(\bR,\bR)$ be a given function with absolutely continuous~$A'$, satisfying $\|A^{(k)}\|_\infty < \infty$ for $0\le k\le 2$. Fix integers $N>0$ and $0\le K<N$. Suppose that the following conditions are satisfied:

\begin{itemize}
\item[(B1)] There exist constants \(C_2,C_4\) and a decreasing function \(\rho : \, \bN \to \bR\) with \(\rho(0) = 1\), such that for all \(i \le j \le k \le l\),
\begin{align*}
|\mu(\bar{f}^{i}\bar{f}^{j})| \leq C_2\rho(j-i),
\end{align*}
\begin{align*}
&|\mu(\bar{f}^{i}\bar{f}^{j}\bar{f}^{k}\bar{f}^{l})| \le C_4\rho(\max\{j-i,l-k\}), \\
&|\mu(\bar{f}^{i}\bar{f}^{j}\bar{f}^{k}\bar{f}^{l}) - \mu(\bar{f}^{i}\bar{f}^{j})\mu(\bar{f}^{k}\bar{f}^{l})| \le C_4\rho(k-j).
\end{align*}
\item[(B2')] There exists a function $\eta : \bN_{0}^{2}\rightarrow \bR_{+}$ such that
\begin{align*}
\left|\mu\left[\frac{1}{\sqrt{N}}\sum_{n=0}^{N-1} \bar{f}^{n}A(\bar{W}_n)\right]\right|  \leq  \eta(N,K).
\end{align*}

\end{itemize}
Then 
\begin{align*}
|\mu(\sigma^2_{N} A'(W)-WA(W))|
& \le  C_2\Vert f \Vert_{\infty} \Vert A'' \Vert_{\infty}\frac{2K+1}{\sqrt{N}} \left(\rho(0) + 2\sum_{i = 1}^{2K} \rho(i) \right)
 \\
&\quad+ 2C_2\Vert A' \Vert_{\infty}  \sum_{i= K+1}^{N-1}  \rho(i)  
\\
&\quad+ 11\max\{C_2,\sqrt{C_4}\} \Vert A'\Vert_{\infty}\frac{\sqrt{K+1}}{\sqrt{N}}\sqrt{\sum_{i=0}^{N-1} (i+1)\rho(i)}\, \\
&\quad+ \eta(N,K).
\end{align*}
\end{thm}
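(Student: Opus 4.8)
The plan is to mimic the proof of Theorem~\ref{thm:pre} almost verbatim, since Theorem~\ref{pre-result 1-D} is its one-dimensional analogue. First I would introduce the univariate Stein operator associated with \eqref{Stein 1d} and write, by a basic algebraic manipulation identical to the one opening Section~\ref{sec:proof_pre},
\begin{align*}
\mu(\sigma^2_{N} A'(W)-WA(W))
&= \mu\!\left( \frac{1}{\sqrt{N}} \sum_{n=0}^{N-1}  A''(W)(W-W_{n})  -  \bar{f}^n ( A'(W) - A'(W_{n})) \right) \\
&\quad+ \mu\!\left(\left(\sigma^2_{N} - \frac{1}{N}\sum_{n=0}^{N-1}\sum_{m \in [n]_K} \bar{f}^n  \bar{f}^m \right) A''(W) \right) \\
&\quad+ \mu\!\left(\frac{1}{\sqrt{N}}\sum_{n=0}^{N-1}-\bar{f}^n  A(W_{n})\right),
\end{align*}
which is the scalar version of the decomposition \eqref{eq1}--\eqref{eq2}. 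The third term is bounded by $\eta(N,K)$ directly from Assumption~(B2'). The first term is handled by the univariate analogue of Proposition~\ref{prop1} (i.e.\ Proposition~4.3 of \cite{HellaLeppanenStenlund_2016}), using a Taylor expansion of $A'$ around $W_n$ to second order, the bound $\|\bar f^i\|_\infty\le 2\|f\|_\infty$, and Assumption~(B1) to control the pairwise correlations of the $\bar f^i$; this produces the term $C_2\Vert f \Vert_{\infty} \Vert A'' \Vert_{\infty}\frac{2K+1}{\sqrt N}\left(\rho(0)+2\sum_{i=1}^{2K}\rho(i)\right)$ (the $d$-powers simply disappear when $d=1$).

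For the middle term I would split it, exactly as in the proof of Proposition~\ref{prop3}, by inserting $\widetilde\sigma^2 = \frac{1}{N}\sum_{n=0}^{N-1}\sum_{m\in[n]_K}\mu(\bar f^n\bar f^m)$ and using the triangle inequality to get a ``fluctuation'' piece $\mu\!\left(\left|\left(\widetilde\sigma^2 - \frac1N\sum_n\sum_{m\in[n]_K}\bar f^n\bar f^m\right)A''(W)\right|\right)$ and a ``bias'' piece $\|(\sigma^2_N-\widetilde\sigma^2)A''\|_\infty$. The bias piece is estimated exactly as in Lemma~\ref{lem5}: writing $\sigma^2_N-\widetilde\sigma^2 = \frac1N\sum_{n}\sum_{m\notin[n]_K}\mu(\bar f^n\bar f^m)$, applying the pair bound from (B1), and summing over the $\ge K+1$ separated lags gives $2C_2\Vert A'\Vert_\infty\sum_{i=K+1}^{N-1}\rho(i)$ (here $\|A''\|_\infty$ should be replaced by $\|A'\|_\infty$ only if one uses the Stein bound $\|A''\|_\infty\le \|A'\|_\infty$-type estimates from Lemma~3.3 of \cite{HellaLeppanenStenlund_2016}; I would just track $\|A''\|_\infty$ and relabel at the end, matching the stated form). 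The fluctuation piece is the one place where the fourth-order correlation decay in (B1) is genuinely needed: one expands the square $\mu\big[(\widetilde\sigma^2-\tfrac1N\sum\sum\bar f^n\bar f^m)^2\big]$ (after pulling $A''$ out via $\|A''\|_\infty\le\|A'\|_\infty$ as done in \cite{HellaLeppanenStenlund_2016}), obtaining a quadruple sum of terms $\mu(\bar f^n\bar f^m\bar f^{n'}\bar f^{m'})-\mu(\bar f^n\bar f^m)\mu(\bar f^{n'}\bar f^{m'})$ with $|n-m|,|n'-m'|\le K$, and applies the two fourth-order inequalities of (B1) together with a careful combinatorial count of how many index quadruples have a given separation; this is precisely Lemma~4.6 of \cite{HellaLeppanenStenlund_2016}, yielding $11\max\{C_2,\sqrt{C_4}\}\Vert A'\Vert_\infty\frac{\sqrt{K+1}}{\sqrt N}\sqrt{\sum_{i=0}^{N-1}(i+1)\rho(i)}$.

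Collecting the four contributions via the triangle inequality gives exactly the stated bound. The main obstacle — really the only non-routine point — is the fluctuation estimate for the quadruple sum: one must organize the quadruple sum over $n,m,n',m'$ with $|n-m|\le K$ and $|n'-m'|\le K$ so that the "far-apart pairs" contribution is controlled by the decoupling inequality $C_4\rho(\text{gap})$ and the "close pairs" contribution by $C_4\rho(\max)$, and then verify that the resulting double sum $\sum_{i}(i+1)\rho(i)$ arises with the claimed constant $11$. Since this is identical to the situation treated in \cite{HellaLeppanenStenlund_2016} (the dimension $d$ enters there only through harmless factors of $d$ that are absent when $d=1$, and through the need to sum over coordinate indices $\alpha,\beta$), I would simply invoke that argument with the scalar substitutions, noting that $\|\bar f^i\|_\infty\le 2\|f\|_\infty$ accounts for the coefficients, exactly as remarked after Proposition~\ref{prop1}.
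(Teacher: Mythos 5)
Your overall strategy coincides with the paper's: the paper proves Theorem \ref{pre-result 1-D} by invoking Theorem 4.2 of \cite{HellaLeppanenStenlund_2016} ``with the same modifications as in the proof of Theorem \ref{thm:pre}'', i.e.\ precisely the three-term decomposition, the analogues of Propositions \ref{prop1} and \ref{prop3} (the latter split as in Lemmas \ref{lem4} and \ref{lem5}), and Assumption (B2'). However, the decomposition you display is not an identity, and the way you propose to reconcile the norms is not valid. In the univariate Stein equation \eqref{Stein 1d} the operator is $\sigma^2A'(w)-wA(w)$, so $A$ itself plays the role that $\nabla A$ plays in the multivariate operator $\tr\Sigma D^2A-w\cdot\nabla A$; consequently every derivative order in the decomposition drops by one relative to the multivariate formula. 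The correct scalar decomposition is
\begin{align*}
\mu(\sigma^2_{N}A'(W)-WA(W))
&=\mu\!\left(\frac{1}{\sqrt N}\sum_{n=0}^{N-1}\bar f^n\bigl[A'(W)(W-W_n)-(A(W)-A(W_n))\bigr]\right)\\
&\quad+\mu\!\left(\Bigl(\sigma^2_N-\frac1N\sum_{n=0}^{N-1}\sum_{m\in[n]_K}\bar f^n\bar f^m\Bigr)A'(W)\right)
-\mu\!\left(\frac{1}{\sqrt N}\sum_{n=0}^{N-1}\bar f^nA(W_n)\right),
\end{align*}
where the first bracket is a first-order Taylor remainder controlled by $\|A''\|_\infty\,|W-W_n|^2$, and the middle term carries $A'(W)$, not $A''(W)$. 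Your version keeps $A''$ and $A'$ in the first two terms but $A(W_n)$ in the third, so the three pieces do not telescope back to $\sigma^2_NA'(W)-WA(W)$; they sum instead to $\sigma^2_NA''(W)-WA'(W)$ plus a leftover $\frac{1}{\sqrt N}\sum_n\bar f^n(A'(W_n)-A(W_n))$.

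Relatedly, your parenthetical explanation for why the stated bound carries $\|A'\|_\infty$ in the second and third terms --- invoking ``the Stein bound $\|A''\|_\infty\le\|A'\|_\infty$-type estimates'' and relabelling at the end --- is not correct: no such inequality holds for a general admissible $A$, and none is used. The $\|A'\|_\infty$ factors appear simply because the covariance-difference term genuinely multiplies $A'(W)$ in the correct decomposition, exactly as $D^2A$ multiplies the covariance difference in \eqref{eq3}. Once the decomposition is corrected, the rest of your argument (Taylor remainder with $\|\bar f^i\|_\infty\le 2\|f\|_\infty$ and (B1); the $\widetilde\sigma^2$ splitting with the fourth-order fluctuation estimate of Lemma 4.6 of \cite{HellaLeppanenStenlund_2016} and the pair-correlation tail sum; (B2') for the last term) goes through verbatim and reproduces the stated constants.
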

Let $\mathscr{F}_{\sigma_{N}^2}$ be the class of differentiable functions $A\colon \bR\rightarrow \bR$ with absolutely continuous derivative and satisfying the following bounds
\beqn
\|A\|_\infty \le 2, 
\quad
\|A'\|_\infty \le \sqrt{2/\pi}\, \sigma^{-1}
\quad\text{and}\quad
\|A''\|_\infty \le 2 \sigma^{-2}.
\eeqn

If $h\colon \bR \rightarrow \bR$ is $1$-Lipschitz, then the corresponding solution $A$ for the Stein equation \eqref{Stein 1d} belongs to $\mathscr{F}_{\sigma_{N}^2}$. 

Following the proof of Theorem 2.3 in \cite{HellaLeppanenStenlund_2016}, we see that if assumptions in Theorem \ref{main thm 1d} are satisfied, then for every $A\in\mathscr{F}_{\sigma_{N}^2}$ the assumptions of Theorem~\ref{pre-result 1-D} are satisfied with the choice
\beqn
\eta(N,K) = 2\max\{1,\sigma_{N}^{-2}\}\sqrt N\tilde\rho(K).
\eeqn
Therefore using the same methods as in the proof of Theorem \ref{thm:main}, we have
\begin{align*}
d_\mathscr{W}(W,\sigma_{N}Z)
&\le |\mu(\sigma^2_{N} A'(W)-WA(W))| \\
&\le  C_2\Vert f \Vert_{\infty} 2\sigma_{N}^{-2}\frac{2K+1}{\sqrt{N}} \left(\rho(0) + 2\sum_{i = 1}^{2K} \rho(i) \right)
+ 2C_2\sqrt{2/\pi}\sigma_{N}^{-1}  \sum_{i= K+1}^{N-1}  \rho(i)  
\\
&\quad+ 11\max\{C_2,\sqrt{C_4}\} \sqrt{2/\pi}\sigma_{N}^{-1}\frac{\sqrt{K+1}}{\sqrt{N}}\sqrt{\sum_{i=0}^{N-1} (i+1)\rho(i)}\,
+ \eta(N,K)
\end{align*}
\begin{align*}
&\le  8C_2\Vert f \Vert_{\infty} \sigma_{N}^{-2}\frac{K+1}{\sqrt{N}} \sum_{i = 0}^{\infty} \rho(i) 
+ 2C_2\sqrt{2/\pi}\sigma_{N}^{-1}  \sum_{i= K+1}^{\infty}  \rho(i)  
\\
&\quad+ 9\max\{C_2,\sqrt{C_4}\} \sigma_{N}^{-1}\frac{K+1}{\sqrt{N}}\sqrt{\sum_{i=0}^{\infty} (i+1)\rho(i)}\, 
+ 2\max\{1,\sigma_{N}^{-2}\}\sqrt N\tilde\rho(K)
\\
&\le  \left(8\sqrt{2}C_2\Vert f \Vert_{\infty} \sigma_{N}^{-2} + 9\max\{C_2,\sqrt{C_4}\} \sigma_{N}^{-1}\right) \frac{K+1}{\sqrt{N}}\sqrt{\sum_{i=0}^{\infty} (i+1)\rho(i)}
 \\
&\quad+ 2C_2\sqrt{2/\pi}\sigma_{N}^{-1}  \sum_{i= K+1}^{\infty}  \rho(i)  
+ 2\max\{1,\sigma_{N}^{-2}\}\sqrt N\tilde\rho(K)
\\
& \le C_{\#}\!\left(\frac{K+1}{\sqrt{N}} + \sum_{i= K+1}^{\infty}  \rho(i) \right) + C_{\#}'\sqrt N\tilde\rho(K),
\end{align*}
where
\beqn
C_{\#} = 12 \max\{\sigma_{N}^{-1},\sigma_{N}^{-2}\} \max\{C_2,\sqrt{C_4}\} (1 + \Vert f \Vert_{\infty}) \sqrt{\sum_{i = 0}^{\infty}(i+1) \rho(i)}
\eeqn
and
\beqn
C_{\#}' = 2\max\{1,\sigma_{N}^{-2}\}.
\eeqn
 This completes the proof of Theorem~\ref{main thm 1d}. \qed

\newpage
\bigskip
\bigskip
\bibliography{CLT_with_a_rate_of_convergence}{}
\bibliographystyle{plainurl}


\vspace*{\fill}

\end{document}